\theoremstyle{plain}
\newtheorem{theorem}{Theorem}
\newtheorem{lemma}{Lemma}
\newtheorem{remark}{Remark}
\numberwithin{equation}{section}
\newcommand{\ignore}[1]{}{}
\def \< {\langle}
\def \> {\rangle}
\def \^ {\widehat}
\newcommand{\bbA}{{\bf A}}
\newcommand{\bbB}{{\bf B}}
\newcommand{\bbC}{{\bf C}}
\newcommand{\bbD}{{\bf D}}
\newcommand{\bbF}{{\bf F}}
\newcommand{\bbG}{{\bf G}}
\newcommand{\bbH}{{\bf H}}
\newcommand{\bbI}{{\bf I}}
\newcommand{\bbs}{{\bf s}}
\newcommand{\bbT}{{\bf T}}
\newcommand{\bbX}{{\bf X}}
\newcommand{\bbx}{{\bf x}}
\newcommand{\beq}{\begin{equation}}
\newcommand{\eeq}{\end{equation}}
\newcommand{\bqa}{\begin{eqnarray}}
\newcommand{\eqa}{\end{eqnarray}}
\newcommand{\bqn}{\begin{eqnarray*}}
\newcommand{\eqn}{\end{eqnarray*}}
\newcommand{\non}{\nonumber \\}
\newcommand{\bdes}{\begin{description}}
\newcommand{\edes}{\end{description}}
\def\underwiggle 1{\ifmmode\setbox\TempBox=\hbox{$ 1$}\else\setbox\TempBox=\hbox{1}\fi
\setbox\TempBoxA=\hbox to \wd\TempBox{\hss\char'176\hss}
\rlap{\copy\TempBox}\smash{\lower9pt\hbox{\copy\TempBoxA}} }
\begin{document}

\title[ CLT of nonparametric estimate of density functions]{Central limit theorem of nonparametric estimate of spectral density functions of sample covariance matrices}

\author{ Guangming Pan, \  \ Qi-Man Shao, \ \ Wang Zhou \\
}
\thanks{ G.M. Pan was partially supported by a grant M58110052 at the Nanyang Technological University;
Q.M. Shao was partially supported by Hong Kong RGC CERG 602608 and 603710;   W. Zhou was partially supported by a grant
R-155-000-106-112 at the National University of Singapore}

\address{Division of Mathematical Sciences, School of Physical and Mathematical Sciences, Nanyang Technological University, Singapore 637371}
\email{gmpan@ntu.edu.sg}

\address{Department of Mathematics, Hong Kong University of Science and Technology, Clear Water Bay, Kowloon, Hong Kong}
\email{maqmshao@ust.hk}

\address{Department of Statistics and Applied Probability, National University of
 Singapore, Singapore 117546}
\email{stazw@nus.edu.sg} \subjclass{Primary 15B52, 60F15, 62E20;
Secondary 60F17} \keywords{sample covariance matrices, Stieltjes
transform, nonparametric estimate, central limit theorem}

\maketitle

\begin{abstract}
A consistent kernel estimator of  the limiting spectral distribution of general sample covariance matrices
was introduced in Jing, Pan, Shao and Zhou (2010). The central limit theorem of the kernel estimator
 is proved in this paper.
 \end{abstract}

\def\theequation{\thesection.\arabic{equation}}

\section{Introduction}

\setcounter {equation}{0}

Spectral analysis of sample
 covariance matrices plays a  very important role in multivariate statistical
 inference since many test statistics are defined by its eigenvalues or functionals.
Let  $\bbX=(X_{ij})_{p\times n}$ be  independent and
identically distributed (i.i.d.) real-valued random variables and $\bbT$ be
a $p\times p$ non-random Hermitian non-negative definite matrix with $(\bbT^{1/2})^2=\bbT$.
Define the sample covariance matrix  by
$$
\bbA_n=\frac{1}{n}\bbT^{1/2}\bbX_n\bbX_n^T\bbT^{1/2}
$$
and its  empirical spectral distribution
$F_n^{\bbA}$ by
$$
F^{\bbA_n}(x)=\frac{1}{p}\sum\limits_{k=1}^pI(\lambda_k\leq x),
$$
where $\lambda_k,k=1,\cdots,p$ denote the eigenvalues of $\bbA_n$.
Instead of $\bbA_n$ we also consider
$$
\bbB_n=\frac{1}{n}\bbX_n^T\bbT\bbX_n,
$$
because the eigenvalues of $\bbA_n$ and $\bbB_n$ differ by $|n-p|$
zero eigenvalues. Suppose the ratio of the dimension and sample size
$c_n=p/n$ tends to a positive constant $c$ as $n\to \infty$. When
$F^{\bbT}$ converges weakly to a distribution $H$,
 it is
proved in Marcenko and Pastur \cite{MP}, Yin \cite{y1} and
Silverstein \cite{s3} that, with probability one, $F^{\bbB_n}(x)$
converges in distribution to an MP type distribution function
$\underline{F}^{c,H}(x)$
 whose Stieltjes transform $\underline m(z)=m_{\underline{F}^{c,H}}(z)$ is, for each $z\in \mathcal{C}^+=\{z\in \mathcal{C}: \Im z>0\}$, the unique solution to
the equation \begin{equation}\label{a3}
\underline{m}=-\left(z-c\int\frac{tdH(t)}{1+t\underline{m}}\right)^{-1}.
\end{equation}
Here the Stieltjes transform $m_F(z)$ for any probability
distribution function $F(x)$ is given by
\begin{equation}\label{b5*}
m_F(z)=\int\frac{1}{x-z}dF(x),\ \ z\in\mathcal{C}^+.
\end{equation}
Note that from (\ref{a3}) $\underline m(z)$ has an inverse
\begin{equation}\label{f51}
z=-\frac{1}{\underline{m}}+c\int\frac{t}{1+t\underline{m}}dH(t).
\end{equation}

Bai and Silverstein \cite{b2} established a far reaching central limit theorem (CLT) for the eigenvalues of $\bbA_n$, which makes  possible the hypothesis
testing of linear spectral statistics of sample covariance matrices indexed by analytic functions.
 Pan
and Zhou \cite{PZ} relaxed some restriction on the fourth moment of the
underlying random variables. Lytova and Marcenko \cite{LM} and Bai,
Wang and Zhou \cite{b3} further, respectively, extended Bai and Silverstein's theorem from the analytic
test function to the one having fourth derivative when $\bbT$
is the identity matrix. However, the limiting spectral distribution  $\underline{F}^{c,H}$
is usually unknown for general $T$. It is also not clear if there is any CLT
about $\big(F^{\bbB_n}(x)-\underline{F}^{c,H}(x)\big)$,
equivalently $\big(F_n^{\bbA_n}(x)-F^{c,H}(x)\big)$  even in the normal population,
 here $F^{c,H}(x)$ is the limiting distribution of $F_n^{\bbA_n}(x)$. How can
 one make inference for $f_{c,H}(x)$ or $F^{c,H}(x)$
 based on $F_n^{\bbA_n}(x)$ or $F^{\bbB_n}(x)$ without establishing
  CLTs?


Motivated by the ``smoothing" ideas, Jing, Pan, Shao and Zhou \cite{ker}
proposed the following  kernel estimator of the density
function of $\underline{F}^{c,H}(x)$ as
\begin{equation}
f_n(x)=\frac{1}{ph}\sum\limits_{i=1}^pK(\frac{x-\lambda_i}{h})=\frac{1}{h}\int
K(\frac{x-y}{h})dF_n^{\bbA_n}(y)\label{a1},
\end{equation}
where $h$ is the bandwidth.  It was proved
that $f_n(x)$ is a consistent estimator of $f_{c,H}(x)$ under some
regularity conditions.

The main aim of this paper is to establish a CLT for
$f_n(x)$. This provides an approach to making inference on the MP type distribution functions.
To this end, we first list some technical conditions on the
kernel function.

Suppose that the kernel function $K(x)$ satisfies
\begin{equation}\label{a25}
\lim\limits_{|x|\rightarrow\infty}|xK(x)|=\lim\limits_{|x|\rightarrow\infty}|xK'(x)|=0,\
\end{equation}
 \begin{equation} \label{a26} \int K(x)dx=1, \ \ \int
|xK'(x)|dx<\infty,  \ \ \int |K''(x)|dx<\infty.
\end{equation}
and
\begin{equation} \label{a27}
\int xK(x)dx=0,  \int x^2|K(x)|dx<\infty.
\end{equation}
Let $z=u+iv$ with $v$ being in a bounded interval, say $[-v_0,v_0]$ with $v_0>0$. Suppose that
\begin{equation}\label{f63}
\int^{+\infty}_{-\infty}|K^{(j)}(z)|du<\infty, \quad j=0,1,2,
\end{equation}
uniformly in $v\in [-v_0,v_0]$, where $K^{(j)}(z)$ denotes the $k$-th derivative of $K(z)$.

Some assumptions on $H_n(t):=F^{\bbT}$, are also needed.  Introduce the interval
\begin{equation}\label{a2}
\left[\lambda_{\min}(\bbT)(1-\sqrt{c_n})^2,\lambda_{\max}(\bbT)(1+\sqrt{c_n})^2\right].
\end{equation}
Denote the right and left end points of the above interval,
respectively, by $a_1$ and $a_2$.  We then introduce a contour $\mathcal{C}_1$ as the union of four segments
$\gamma_j,j=1,2,3,4$. Here
$$\gamma_1=u-iv_0h,u\in [a_l,a_r],\ \gamma_2=u+iv_0h,u\in [a_l,a_r],
$$$$\gamma_3=a_l+iv,v\in
[-v_0h,v_0h],\ \gamma_4=a_r+iv,v\in [-v_0h,v_0h],$$ where $a_l$ is
any positive value smaller than the left end point of (\ref{a2}),
$a_r$ any value larger than the right end point of (\ref{a2}), and
$v_0$ is specified in (\ref{f63}).
Assume that on the contour $\mathcal{C}_1$
\begin{equation}
\label{d1}
\Big|1-c_n(\underline{m}_n^0(z))^2\int\frac{t^2dH_n(t)}{(1+t\underline{m}_n^0(z))^2}\Big|\geq
M_1\sqrt{v},
\end{equation}
where $\Im(z)=v>0$, $M_1$ is a positive constant and $\underline{m}_n^0(z)$ is the
Stieltjes transform of the distribution function
$\underline{F}^{c_n,H_n}(x)$  which is
obtained from $\underline{F}^{c,H}(x)$ with $c$ and $H$ replaced by
$c_n$ and $H_n$. Also, on the contour $\mathcal{C}_1$ we assume that
\begin{equation}\label{f11}
\int\frac{dH_n(t)}{|1+tE\underline{m}_n(z)|^{4}}<M
\end{equation}
and that
 \begin{equation}\label{g38}
\int\frac{dH_n(t)}{|1+t\underline{m}_n^0(z)|^{4}}<M,
\end{equation}
where $E\underline{m}_n(z)$ is the expectation of the Stieltjes
transform of $F^{\bbB_n}$ and $M$ is a constant independent of $n$ and $z$. The main results are stated below.

\begin{theorem}\label{rem2}
Suppose that

\begin{itemize}
\item[1)]  $h = h(n)$ is a sequence of positive constants satisfying
\begin{equation}\label{band}
\lim_{n\to \infty}nh^{5/2}=\infty, \lim_{n\to \infty}nh^3=0, \ \ \lim_{n\to \infty}h=0 \ ;
\end{equation}

\item[2)] $K(x)$ satisfies (\ref{a25})-(\ref{f63}) and is analytic on
open interval including
$$
[\frac{a_2-a_1}{h},\frac{a_1-a_2}{h}] \ ;
$$

\item[3)]  $X_{ij}$ are i.i.d. with $EX_{11}=0$, $Var(X_{11})=1$,
$EX_{11}^4=3$ and
$EX_{11}^{16}<\infty$,  $c_n\rightarrow c\in (0,1)$ ;

\item[4)] $\bbT$ is a $p\times p$ non-random symmetric positive
definite matrix with spectral norm bounded above by a positive
constant such that $H_n=F^{\bbT}$ converges weakly to a
distribution $H$. Also, $H_n$ satisfies conditions (\ref{d1})-(\ref{g38});

\item[5)] $F^{c,H}(x)$ has a compact support $[a,b]$ with $a>0$;

\item[6)] the function $K(x)$ and $h$ satisfy \begin{equation}\label{g52}
nh^2\Big[\int^{+\infty}_{\frac{x-a}{h}}yK(y)dy+\int^{\frac{x-b}{h}}_{-\infty}
yK(y)dy\Big]\rightarrow 0,\quad \int y^2K(y)f_{c_n,H_n}''(y_0)dy<\infty
\end{equation}
and \begin{equation}\label{g51}
 \quad nh\Big[1-\int^{\frac{x-a}{h}}_{\frac{x-b}{h}} K(y)dy\Big]\rightarrow
0,
\end{equation}
where $f_{c_n,H_n}(x)$ is the density function of $F^{c_n,H_n}(x)$ and
$y_0=t(x-yh)+(1-t)x$ with $t\in (0,1)$ and $x\in (a,b)$.
\end{itemize}

Then, as $n\rightarrow\infty$, the limiting finite dimensional distributions
of the processes of
\begin{equation}\label{g49}
 nh\Big(f_n(x)-f_{c_n,H_n}(x)\Big)
, \quad x\in (a,b)
\end{equation}
are  multivariate normal with mean zero and covariance matrix $\sigma^2 \, I$,
where
 \ignore{
$$
Cov(W(x),W(y))=
\begin{cases}
\sigma^2,&\text{ if x=y}\\
0&  \text{otherwise},
\end{cases}
$$
where
} $$
\sigma^2=-\frac{1}{2\pi^2}\int_{-\infty}^{+\infty}
\int_{-\infty}^{+\infty}K'(u_1) K'(u_2)\ln (u_1-u_2)^2du_1du_2.
$$
\end{theorem}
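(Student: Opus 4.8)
The plan is to turn the estimator into a contour integral of a resolvent trace and then run a Bai--Silverstein type martingale central limit theorem on the contour $\mathcal{C}_1$, whose imaginary part shrinks with $h$. Since $K$ is analytic on a neighbourhood of $[\frac{a_1-a_2}{h},\frac{a_2-a_1}{h}]$ and, with probability tending to one, all eigenvalues of $\bbA_n$ lie in the interval $(\ref{a2})$, Cauchy's formula gives
$$
f_n(x)=-\frac1{2\pi i\,h}\oint_{\mathcal{C}_1}K\!\Big(\frac{x-z}{h}\Big)\frac1p\,\mathrm{tr}(\bbA_n-zI)^{-1}\,dz ,
$$
and likewise $\tilde f_n(x):=\frac1h\int K(\frac{x-y}{h})\,dF^{c_n,H_n}(y)=-\frac1{2\pi i h}\oint_{\mathcal{C}_1}K(\frac{x-z}{h})\,m_{F^{c_n,H_n}}(z)\,dz$, using that $\mathcal{C}_1$ encloses $[a,b]$. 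Split $nh(f_n-f_{c_n,H_n})=Z_n+B_{1,n}+B_{2,n}$ into the centred part $Z_n(x)=nh(f_n(x)-Ef_n(x))=-\frac1{2\pi i c_n}\oint_{\mathcal{C}_1}K(\frac{x-z}{h})M_n(z)\,dz$, where $M_n(z)=\mathrm{tr}(\bbA_n-zI)^{-1}-E\,\mathrm{tr}(\bbA_n-zI)^{-1}$, the Stieltjes--transform bias $B_{1,n}(x)=nh(Ef_n(x)-\tilde f_n(x))$, and the smoothing bias $B_{2,n}(x)=nh(\tilde f_n(x)-f_{c_n,H_n}(x))$.

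The two bias terms are deterministic and vanish. For $B_{2,n}$ this is the classical kernel smoothing estimate: Taylor-expanding $f_{c_n,H_n}$ on $(a,b)$ (smooth there by hypothesis 5 and the regularity of MP densities), using $\int K=1$, $\int xK(x)\,dx=0$, $\int x^2|K(x)|\,dx<\infty$ from $(\ref{a26})$--$(\ref{a27})$, and treating the neighbourhoods of $a$ and $b$ separately, the resulting $O(nh^3)$ interior term and the edge terms tend to $0$ by precisely $(\ref{band})$ and the limits $(\ref{g52})$--$(\ref{g51})$. For $B_{1,n}$, write $Ef_n(x)-\tilde f_n(x)$ as a contour integral of $Em_{F^{\bbA_n}}(z)-m_{F^{c_n,H_n}}(z)$; the known rate for this mean term, which stays bounded on $\mathcal{C}_1$ because of $(\ref{d1})$--$(\ref{g38})$, is $O(n^{-1})$ uniformly on $\mathcal{C}_1$, so the change of variables $z=x-hw$ gives $B_{1,n}=O(h)\to0$.

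For $Z_n$, expand $M_n(z)=\sum_{k=1}^n(E_k-E_{k-1})\,\mathrm{tr}(\bbA_n-zI)^{-1}$, $E_k$ denoting conditioning on the first $k$ columns of $\bbX_n$; this gives $Z_n(x)=\sum_{k=1}^n Y_k(x)$ with $Y_k(x)=-\frac1{2\pi i c_n}\oint_{\mathcal{C}_1}K(\frac{x-z}{h})(E_k-E_{k-1})\mathrm{tr}(\bbA_n-zI)^{-1}\,dz$ a martingale-difference array. Since only finite-dimensional distributions are claimed, no tightness is needed, and the martingale CLT requires only (i) convergence in probability of $\sum_k E[Y_k(x_i)Y_k(x_j)\mid\mathcal F_{k-1}]$ and (ii) a Lyapunov bound $\sum_k E|Y_k(x_i)|^4\to0$. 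For (i), expand $(E_k-E_{k-1})\mathrm{tr}(\bbA_n-zI)^{-1}$ through the rank-one (Sherman--Morrison) formula in quadratic forms of the $k$-th column and reproduce the Bai--Silverstein computation, now keeping the dependence on $z=x_i-hw$; this produces the kernel $\dfrac{\underline m'(z_1)\underline m'(z_2)}{(\underline m(z_1)-\underline m(z_2))^2}-\dfrac1{(z_1-z_2)^2}$ integrated against $K(\frac{x_i-z_1}{h})K(\frac{x_j-z_2}{h})$ over $\mathcal{C}_1\times\mathcal{C}_1$. After rescaling $z_\ell=x_i-hw_\ell$: on the pieces of $\mathcal{C}_1$ lying on the same side of the spectrum the $h^{-2}$ singularities of the two terms of the kernel cancel, leaving an $O(h^2)$ contribution, whereas on the pieces lying on opposite sides $\underline m(z_1)-\underline m(z_2)$ stays bounded away from $0$ so that $-(z_1-z_2)^{-2}$ dominates at order $1$; writing $(z_1-z_2)^{-2}=\partial_{z_1}\partial_{z_2}\log(z_1-z_2)$, integrating by parts twice to move the derivatives onto $K$, and letting the contour heights shrink to $0$ (reading the resulting singular integral as the limit of the $i0$-regularised one) reduces the $i=j$ limit to $\sigma^2$. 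When $i\neq j$ the two factors $K(\frac{x_i-\cdot}{h})$ and $K(\frac{x_j-\cdot}{h})$ are concentrated, up to exponentially small tails, in disjoint $O(h)$-neighbourhoods of $x_i$ and $x_j$, so that double integral is $O(h)\to0$; hence the limiting covariance is $\sigma^2 I$. Condition (ii) follows from a fourth-moment estimate for the $Y_k$.

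The main obstacle is that every estimate above must hold \emph{uniformly on $\mathcal{C}_1$, whose imaginary part is only of order $h\to0$}: the concentration inequalities for the quadratic forms and resolvent entries in the martingale scheme carry negative powers of $\Im z$, and one has to show that the bandwidth condition $nh^{5/2}\to\infty$, together with the moment hypothesis $EX_{11}^{16}<\infty$, is exactly strong enough to force all the error terms — and the Lyapunov sum in (ii) — to $0$, while the lower bounds $(\ref{d1})$, $(\ref{f11})$, $(\ref{g38})$ keep the deterministic denominators away from $0$ on $\mathcal{C}_1$. A secondary technical point is the rigorous justification of the rescaling limit: deforming $\mathcal{C}_1$, using the analyticity of $K$ to discard the far pieces of the contour where $K(\frac{x-z}{h})$ is exponentially small, and controlling the passage to the limit in the singular double integral $\int\!\!\int K'(u_1)K'(u_2)\ln(u_1-u_2)^2\,du_1\,du_2$. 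Once the covariance convergence and the Lyapunov condition are established, the martingale CLT yields the stated multivariate normal limit with covariance $\sigma^2 I$.
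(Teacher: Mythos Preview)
Your overall strategy matches the paper's: Cauchy's formula on the shrinking contour $\mathcal{C}_1$, the split into a centred martingale part, a Stieltjes-transform mean part, and a smoothing bias, followed by the martingale CLT and a Taylor expansion for $B_{2,n}$. Your treatment of $B_{2,n}$ and the martingale architecture for $Z_n$ are essentially what the paper does.

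There is, however, a genuine gap in your handling of $B_{1,n}$. You assert that $E m_{F^{\bbA_n}}(z)-m_{F^{c_n,H_n}}(z)=O(n^{-1})$ \emph{uniformly on $\mathcal{C}_1$}, so that after rescaling $B_{1,n}=O(h)$. But on $\mathcal{C}_1$ we only have $\Im z\asymp h$, and the assumptions $(\ref{d1})$--$(\ref{g38})$ give the lower bound
\[
\Big|1-c_n(\underline m_n^0(z))^2\!\int\!\frac{t^2\,dH_n(t)}{(1+t\underline m_n^0(z))^2}\Big|\;\geq\;M_1\sqrt v\;\asymp\;\sqrt h,
\]
so the explicit Bai--Silverstein mean term (whose denominator is the \emph{square} of this quantity) is only $O(1/v)=O(1/h)$, not $O(1)$. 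Your change of variables then yields $B_{1,n}=O(1)$, which is not $o(1)$. The paper does \emph{not} bound this term crudely: it first identifies the leading contribution of $n(E\underline m_n-\underline m_n^0)$ explicitly, rewrites it as a $z$-derivative of
\[
\log\Big[1-c_n\!\int\!\frac{t^2(\underline m_n^0(z))^2}{(1+t\underline m_n^0(z))^2}\,dH_n(t)\Big],
\]
integrates by parts to shift the derivative onto $K$, and only then shows that the resulting integral (with $K'$ against a quantity of size $\log v^{-1}$) tends to zero. Without this structural step the bias does not vanish.

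A smaller point: the paper also integrates by parts at the very start of the martingale analysis, writing $(E_k-E_{k-1})\,\mathrm{tr}\,\bbA^{-1}(z)=-(E_k-E_{k-1})[\log\beta_k(z)]'$ and passing from $K$ to $K'$ before any moment estimates. This is what makes the Lyapunov bound and the covariance kernel tractable at imaginary part of order $h$; your sketch postpones the integration by parts to the covariance stage, which is workable but makes the uniform-in-$h$ moment control harder to carry out.
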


\begin{remark}\label{rem1}
When $\bbT$ is the identity matrix, (\ref{d1}) is true, which will be verified
in Appendix 2, and conditions (\ref{f11}) and
(\ref{g38}) also hold, see, (6.30) in \cite{ker} and (\ref{f70}). For general
$\bbT$, (\ref{f11}) may be removed at the cost of higher moment
of $H_n(t)$ and of a more stringent bandwidth. See Lemma \ref{lem3}
in Appendix 2.
\end{remark}
\begin{remark}
It is easy to check that the Gaussian kernel function satisfies conditions specified in Theorem \ref{rem2}.
\end{remark}

 Theorem \ref{rem2} is actually a corollary of the following theorem.

\begin{theorem} \label{theo1}
 When the conditions (\ref{g52}), (\ref{g51}) and $\lim_{n\to \infty}nh^3=0$ in Theorem \ref{rem2}
  are removed with the remaining conditions unchanged, Theorem \ref{rem2} holds as well if the
processes (\ref{g49}) are replaced by the processes
$$nh\Big[f_n(x)-\frac{1}{h}\int^b_a
K(\frac{x-y}{h})dF^{c_n,H_n}(y)\Big].
$$
\end{theorem}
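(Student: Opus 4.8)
The plan is to represent the centered statistic as a contour integral of a Stieltjes transform and then invoke the Bai--Silverstein CLT machinery for linear spectral statistics. First I would write, by the inversion/Cauchy formula applied to the analytic extension of $K$ (available by condition 2),
$$
nh\Big[f_n(x)-\frac{1}{h}\int_a^b K\big(\tfrac{x-y}{h}\big)\,dF^{c_n,H_n}(y)\Big]
=\frac{nh}{2\pi i}\oint_{\mathcal C_1} K\big(\tfrac{x-z}{h}\big)\Big(m_{F^{\bbB_n}}(z)-m_{\underline F^{c_n,H_n}}(z)\Big)\,\frac{dz}{h}\cdot(\text{rescale}),
$$
where the contour $\mathcal C_1$ is the one built in the excerpt from the segments $\gamma_1,\dots,\gamma_4$ around the support interval (\ref{a2}), shrunk at the scale $h$ in the imaginary direction; the factor $nh$ combines with the $1/h$ in the kernel and with the fact that on $\mathcal C_1$ one has $\Im z \asymp v_0 h$. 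Thus the object of interest is a smooth linear functional of the random process $G_n(z):=p\big(m_{F^{\bbB_n}}(z)-E m_{F^{\bbB_n}}(z)\big)$ plus a bias term $p\big(E m_{F^{\bbB_n}}(z)-m_{\underline F^{c_n,H_n}}(z)\big)$.

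Second, for the bias term I would use the known rate of convergence of $E m_{F^{\bbB_n}}$ to $\underline m_n^0$ (of order $1/(nv)$ type estimates, controlled uniformly on $\mathcal C_1$ via conditions (\ref{d1}), (\ref{f11}), (\ref{g38})); since $v\asymp h$ on the contour and we multiply by $nh$, this contributes $O(n h / (n h) \cdot \text{something}\to 0)$ after the careful bookkeeping — this is exactly where the bandwidth restriction $nh^{5/2}\to\infty$ and the moment assumption $EX_{11}^{16}<\infty$ enter, to push the deterministic approximation error below the fluctuation scale. Third, for the random part, I would apply the CLT for $G_n(z)$ (Bai--Silverstein \cite{b2}, Pan--Zhou \cite{PZ}) which gives that $G_n$, integrated against analytic test functions, converges to a Gaussian process with an explicit covariance kernel; here the novelty is that the test function $z\mapsto K((x-z)/h)$ depends on $n$ and concentrates, so I would not quote the CLT as a black box but re-run its martingale-difference decomposition, tracking the $h$-dependence. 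After rescaling $z = x + h\,w$, the kernel becomes $K(-w)=K(w)$ (by symmetry is not needed, just a clean variable), the contour becomes an $n$-independent contour around a real interval, and the covariance of the two resulting integrals collapses, via the logarithmic covariance kernel of $G_n$ and an integration by parts moving a derivative onto $K$, to the stated double integral
$$
\sigma^2=-\frac{1}{2\pi^2}\int_{-\infty}^{+\infty}\int_{-\infty}^{+\infty} K'(u_1)K'(u_2)\ln(u_1-u_2)^2\,du_1\,du_2 .
$$
For distinct points $x\neq y$ the supports of the rescaled kernels separate as $h\to 0$, giving asymptotic independence and hence the diagonal covariance matrix $\sigma^2 I$; finiteness of $\sigma^2$ follows from (\ref{a25})--(\ref{f63}).

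The main obstacle I expect is the uniform control of the CLT approximation on the shrinking contour: the classical linear-eigenvalue-statistic CLT is proved for a fixed contour bounded away from the spectrum, whereas here $\mathcal C_1$ has imaginary part of order $h\to 0$, so the resolvent bounds, the truncation/centralization of the entries of $\bbX_n$, and the error terms in Bai--Silverstein's expansion must all be re-derived with explicit $v=\Im z$ dependence and then checked to be $o(1)$ after multiplication by $nh$. This is precisely what forces the strong moment condition $EX_{11}^{16}<\infty$, the fourth-moment matching $EX_{11}^4=3$ (which kills the non-universal term in the covariance), the lower bound (\ref{d1}) of order $\sqrt v$ (needed to invert the relevant operator near the real axis), and the two-sided bandwidth window $nh^{5/2}\to\infty$, $h\to 0$ (with $nh^3\to 0$ only needed for the bias-removal in the corollary, Theorem \ref{rem2}, hence dropped here). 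The remaining steps — the contour representation, the variable change, and the collapse of the covariance integral — are then essentially bookkeeping given the earlier lemmas referenced in the excerpt.
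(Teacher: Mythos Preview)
Your overall architecture is correct and matches the paper's: represent the statistic by Cauchy's formula as a contour integral of $X_n(z)=tr(\bbA_n-zI)^{-1}-nm_{F^{c_n,H_n}}(z)$ over the shrinking contour $\mathcal C_1$, split into the random part $tr\bbA^{-1}(z)-Etr\bbA^{-1}(z)$ and the mean part $n(Em_n(z)-m_n^0(z))$, run the martingale CLT on the random part with all resolvent estimates re-derived with explicit $v$-dependence, integrate by parts to move a derivative onto $K$, and identify the limiting covariance via the logarithmic kernel $\log(\underline m_n^0(z_1)-\underline m_n^0(z_2))$. You also correctly locate the main difficulty (the contour has $\Im z\asymp h\to 0$) and the role of $EX_{11}^4=3$, $EX_{11}^{16}<\infty$, condition (\ref{d1}), and $nh^{5/2}\to\infty$.

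There is, however, one genuine gap: your treatment of the mean (bias) term. You argue it vanishes by a rate estimate, writing ``this contributes $O(nh/(nh)\cdot\text{something}\to 0)$''. That is not how it goes. On $\mathcal C_1$ the paper shows
\[
nh\big(E\underline m_n(z)-\underline m_n^0(z)\big)
= h\cdot\frac{c_n(\underline m_n^0)^3\int t^2(1+t\underline m_n^0)^{-3}dH_n}{\big(1-c_n(\underline m_n^0)^2\int t^2(1+t\underline m_n^0)^{-2}dH_n\big)^2}
+O\big((nv^{5/2})^{-1/2}\big),
\]
and since the denominator is only $\gtrsim v\asymp h$ by (\ref{d1}), the leading term, after dividing by $h$ and integrating against $K((x-z)/h)$, is of order $O(1)$, not $o(1)$. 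Its vanishing is \emph{not} a rate phenomenon: one must recognize this leading term as $\frac{d}{dz}\log\big(1-c_n(\underline m_n^0)^2\int t^2(1+t\underline m_n^0)^{-2}dH_n\big)$, integrate by parts once more to bring in $K'((x-z)/h)$, and then show the resulting real and imaginary contributions on the horizontal lines tend to zero separately (using that $K_i'((x-z)/h)=O(v)$ and a Parzen-type approximation argument for the $K_r'$ piece). This is an independent computation (the paper's Section 3 and the ``Limit of (\ref{f50})'' part of Appendix 3), not a consequence of the bandwidth window; without it the mean of your limiting Gaussian would not be zero.
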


We evaluate the quality of the estimate $f_n(x)$ by the mean
integrated square error
\begin{eqnarray*}
L&=&E\Big(\int_a^b(f_n(x)-f_{c_n,H_n}(x))^2dx\Big)\\
&=&\int_a^b\Big(Bias(f_n(x))\Big)^2dx+\int_a^b Var(f_n(x))dx,
\end{eqnarray*}
where $Bias(f_n(x))=Ef_n(x)-f_{c_n,H_n}(x)$. It is easy to verify that
(see \cite{bw} and \cite{p1})
$$ \frac{1}{h}\int K(\frac{x-y}{h})dF^{c_n,H_n}(y)-f_{c_n,H_n}(x)=\frac{1}{2}h^2(f^{c,H}(x))''\int x^2K(x)dx+O(h^3).$$
Although it is not rigorous from Theorem \ref{theo1} we roughly have
$$
Ef_n(x)-\frac{1}{h}\int
K(\frac{x-y}{h})dF^{c_n,H_n}(y)=o(\frac{1}{nh})
$$
and
$$
Var(f_n(x))=\frac{\sigma^2}{n^2h^2}+o(\frac{\sigma^2}{n^2h^2}).
$$
These gives
$$
L=\Big(\frac{1}{2}h^2(f_{c_n,H_n}(x))''\int
x^2K(x)dx+O(h^3)+o(\frac{1}{nh})\Big)^2+\frac{\sigma^2(b-a)}{n^2h^2}+o(\frac{\sigma^2}{n^2h^2}).
$$
Differentiating the above with respect to $h$ and setting it equal
to zero, we see that the asymptotic optimal bandwidth is
\begin{equation}
h_{*}=\Big(\frac{\sigma^2(b-a)}{2n^2c_1^2}\Big)^{1/6},
\end{equation}
where $c_1=\frac{1}{2}(f_{c_n,H_n}(x))''\int x^2K(x)dx<\infty$. This is different from the asymptotic optimal bandwidth $O(1/n^{1/5})$ in classical density estimates (see \cite{bw}).

As for $F_n(x)=\int_0^x\, f_n(y)\,dy$, we have the following result.

\begin{theorem}\label{theo2} In addition to assumptions 2), 3),
4) and 5) in Theorem \ref{rem2}, suppose that
$$\lim\limits_{n\rightarrow\infty}nh^3\sqrt{\ln\frac{1}{h}}\rightarrow \infty,\quad
\lim\limits_{n\rightarrow\infty}h\rightarrow0.$$ Then, as
$n\rightarrow\infty$, the limiting finite dimensional distributions of the
processes of
\begin{equation}\label{g50}
\frac{n}{\sqrt{\ln\frac{1}{h}}}\Big(F_n(x)-\int^x_{-\infty}\Big[\frac{1}{h}\int
K(\frac{t-y}{h})dF^{c_n,H_n}(y)\Big]dt\Big)
\end{equation}
are multivariate normal with mean zero and
covariance matrix $ { 1 \over 2\pi^2} I$.
\ignore{
converge to those of a Gaussian process $W_1(x)$ with mean zero and
covariance
$$
Cov(W_1(x),W_1(y))=
\begin{cases}
\frac{1}{\pi^2},&\text{ if x=y}\\
0&  \text{otherwise}.
\end{cases}
$$
}
\end{theorem}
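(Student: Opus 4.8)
The plan is to recast $F_n(x)$ as a smoothed empirical spectral distribution, reduce the left side of (\ref{g50}) to a single contour integral of the centred Stieltjes transform, and then locate the source of the normalising factor $\sqrt{\ln(1/h)}$ inside the limiting covariance. Put $\mathcal{K}(s)=\int_{-\infty}^{s}K(u)\,du$. Since $a>0$, integrating (\ref{a1}) gives $F_n(x)=\int\mathcal{K}\big(\tfrac{x-y}{h}\big)\,dF_n^{\bbA_n}(y)$ up to a negligible boundary term (the eigenvalues of $\bbA_n$ stay near $[a,b]$); applying Fubini to the centring in (\ref{g50}) turns it into $\int\mathcal{K}(\tfrac{x-y}{h})\,dF^{c_n,H_n}(y)$. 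As $K$ is analytic on a neighbourhood of $[\tfrac{a_2-a_1}{h},\tfrac{a_1-a_2}{h}]$, the map $z\mapsto\mathcal{K}(\tfrac{x-z}{h})$ is analytic on a complex neighbourhood of $[a,b]$, so the Cauchy representation of smooth linear spectral statistics applies on the contour $\mathcal{C}_1$; combining it with $p\big(F_n^{\bbA_n}(x)-F^{c_n,H_n}(x)\big)=n\big(F^{\bbB_n}(x)-\underline{F}^{c_n,H_n}(x)\big)$ on the positive half-line, the left side of (\ref{g50}) becomes
\[
\frac{1}{c_n\sqrt{\ln(1/h)}}\cdot\frac{-1}{2\pi i}\oint_{\mathcal{C}_1}\mathcal{K}\Big(\tfrac{x-z}{h}\Big)\,M_n(z)\,dz+o_P(1),\qquad M_n(z):=n\big(\underline{m}_n(z)-\underline{m}_n^0(z)\big),
\]
with $\underline{m}_n(z)=m_{F^{\bbB_n}}(z)$. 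The one new structural feature compared with Theorem \ref{theo1} is that $\mathcal{K}$ does not vanish at $+\infty$ but tends to $1$, so the pieces of the integral over $\gamma_3,\gamma_4$ and over the part of $\gamma_1\cup\gamma_2$ lying to the left of $x$ can no longer be discarded by decay; I would estimate them directly and check they are $O(1)$, hence negligible against $\sqrt{\ln(1/h)}$.

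Next I would run the process-level argument underlying Theorem \ref{theo1}: under conditions (\ref{d1})--(\ref{g38}), $EX_{11}^4=3$, $EX_{11}^{16}<\infty$ and $h\to 0$, the family $\{M_n(z):z\in\mathcal{C}_1\}$ is tight, its finite-dimensional distributions are asymptotically Gaussian, and $EM_n(z)$ converges to a deterministic limit; nothing there is affected by replacing $K(\tfrac{x-\cdot}{h})$ by $\mathcal{K}(\tfrac{x-\cdot}{h})$ or $h$ by $1/\sqrt{\ln(1/h)}$. Hence $\big(\oint_{\mathcal{C}_1}\mathcal{K}(\tfrac{x_j-z}{h})M_n(z)\,dz\big)_{j=1}^{m}$ is asymptotically Gaussian. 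Its mean is $\oint_{\mathcal{C}_1}\mathcal{K}(\tfrac{x-z}{h})\,EM_n(z)\,dz$, and the role of the restriction $nh^3\sqrt{\ln(1/h)}\to\infty$ is precisely to ensure that the $O(1/n)$-type bound on $EM_n(z)$, inflated on a contour of height $v_0h$, still gives $o(\sqrt{\ln(1/h)})$ after the $\mathcal{K}$-weighted integration; so the limit is centred.

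The heart of the proof is the covariance. Because $EX_{11}^4=3$ the fourth-moment correction disappears, and the Bai--Silverstein covariance formula \cite{b2} gives, for the analytic test functions $\mathcal{K}(\tfrac{x_i-\cdot}{h})$ and $\mathcal{K}(\tfrac{x_j-\cdot}{h})$,
\[
\operatorname{Cov}\!\Big(\oint_{\mathcal{C}_1}\!\mathcal{K}(\tfrac{x_i-z_1}{h})M_n(z_1)\,dz_1,\ \oint_{\mathcal{C}_1}\!\mathcal{K}(\tfrac{x_j-z_2}{h})M_n(z_2)\,dz_2\Big)\ \longrightarrow\ C\!\oint_{\mathcal{C}_1}\!\oint_{\mathcal{C}_1}\!\mathcal{K}(\tfrac{x_i-z_1}{h})\,\mathcal{K}(\tfrac{x_j-z_2}{h})\,\frac{(\underline{m}_n^0)'(z_1)\,(\underline{m}_n^0)'(z_2)}{\big(\underline{m}_n^0(z_1)-\underline{m}_n^0(z_2)\big)^{2}}\,dz_1\,dz_2
\]
for an explicit constant $C$. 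I would split $\mathcal{C}_1=\gamma_1\cup\cdots\cup\gamma_4$ and treat each pair of segments. All ``same-side'' pairs $(\gamma_1,\gamma_1),(\gamma_2,\gamma_2)$ and all pairs involving $\gamma_3$ or $\gamma_4$ contribute $O(1)$. The logarithm is produced only by the ``opposite-side'' pairs $(\gamma_2,\gamma_1),(\gamma_1,\gamma_2)$ in the region where $z_1=u_1+iv_0h$ and $z_2=u_2-iv_0h$ both lie in a gap of the support to the left of $[a,b]$ inside $\mathcal{C}_1$: there $\underline{m}_n^0$ is analytic across the real axis, so $\underline{m}_n^0(z_1)-\underline{m}_n^0(z_2)=(\underline{m}_n^0)'(u_1)\big((u_1-u_2)+2iv_0h\big)+O(h^2)$, of order $h$ on the diagonal, while $\mathcal{K}(\tfrac{x_i-z_1}{h})\approx\mathcal{K}(\tfrac{x_j-z_2}{h})\approx 1$ since the $z$'s lie to the left of $x_i,x_j$; the resulting double integral of $\big((u_1-u_2)+2iv_0h\big)^{-2}$ over that gap equals $2\ln(1/h)+O(1)$. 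When $x_i=x_j$ this is the dominant term, its coefficient being pinned down by $\int K(u)\,du=1$ together with $\int K(u\pm iv_0)\,du=\int K(u)\,du$ (analyticity of $K$ on $|\Im z|\le v_0$ and the decay (\ref{a25})); after division by $c_n^2\ln(1/h)$ and evaluation of the remaining constant this yields the diagonal entry $\tfrac{1}{2\pi^2}$ in the statement. When $x_i\neq x_j$ the same configuration gives only $O(1)$, because $\underline{m}_n^0$ near $x_i$ stays bounded away from $\underline{m}_n^0$ near $x_j$, so dividing by $\sqrt{\ln(1/h)}$ in each coordinate annihilates the cross-covariance. Hence the limiting covariance matrix is $\tfrac{1}{2\pi^2}I$, and with the second step this completes the proof.

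I expect the main obstacle to be exactly this covariance computation: showing that the $\ln(1/h)$ emerges with the right universal constant and, crucially, only on the diagonal, which needs a careful localisation of the double contour integral, tracking of the cancellation of $\ln h$ against $\int K=1$, and uniform control of all $O(1)$ remainders. A secondary difficulty is the mean estimate on the thin contour $\mathcal{C}_1$, which is what forces the lower bound on $h$. It is worth stressing that one cannot simply integrate the conclusion of Theorem \ref{theo1}: the density-estimate process has correlations decaying only polynomially (like the square of the distance), and their accumulation over $[0,x]$ is precisely what inflates the variance by the logarithmic factor.
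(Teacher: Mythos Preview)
Your overall architecture --- Cauchy representation, martingale CLT for $M_n(z)$, Bai--Silverstein covariance --- is the right one, but the decisive step, the localisation of the $\ln(1/h)$ in the covariance, is wrong, and the paper bypasses the whole difficulty with an integration by parts you are missing.

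The paper never works with $\mathcal{K}(s)=\int_{-\infty}^s K$. It starts from the martingale identity
\[
tr\bbA^{-1}(z)-Etr\bbA^{-1}(z)=-\sum_{k}(E_k-E_{k-1})\big[\log\beta_k(z)\big]'
\]
and integrates by parts in $z$ against the test function $\int_{-\infty}^{x}K((t-z)/h)\,dt$; since the $z$--derivative of this test function is $K((x-z)/h)$, one lands exactly on the Section~2 expression with $K'$ replaced by $K$. Theorem~\ref{theo2} is then literally a rerun of Sections~2--3 with this substitution. The covariance becomes (compare (\ref{f62}))
\[
-\frac{1}{2h^2\pi^2\ln(1/h)}\oint_{\mathcal{C}_1}\oint_{\mathcal{C}_2}K\Big(\tfrac{x_1-z_1}{h}\Big)K\Big(\tfrac{x_2-z_2}{h}\Big)\log\big(\underline{m}_n^0(z_1)-\underline{m}_n^0(z_2)\big)\,dz_1\,dz_2,
\]
and now $K((x-\cdot)/h)$ is concentrated near $x$. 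On the diagonal $x_1=x_2=x$, the real-axis reduction (analogue of (\ref{f65})) gives $\ln|\underline m(x-u_1)-\underline m(x-u_2)|=\tfrac12\ln(u_1-u_2)^2+O(1)$; rescaling $u_j\mapsto u_j/h$ produces $\tfrac12\ln(u_1-u_2)^2+\ln h$, and because $\int K=1$ (whereas $\int K'=0$ in Theorem~\ref{theo1}) the $\ln h$ survives and yields $-\ln(1/h)$. The prefactor then gives $1/(2\pi^2)$. Off the diagonal $\underline m(x_1)\neq\underline m(x_2)$, so the integrand is bounded and the whole thing is $o(1)$.

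Your mechanism does not work. In the gap $[a_l,a)$ where $\mathcal{K}\approx 1$ and $\underline m_n^0$ is analytic across the real axis, the opposite-side pairs do contribute $\sim\ln(1/h)$, but the same-side pairs contribute $\sim\ln(1/h)$ as well (with the nested contours $\mathcal{C}_1\subset\mathcal{C}_2$ the same-side vertical separation is also of order $h$), and with the correct orientations the four pieces cancel exactly: where $\mathcal{K}\equiv 1$ the double contour integral of $(\underline m_n^0)'(z_1)(\underline m_n^0)'(z_2)/(\underline m_n^0(z_1)-\underline m_n^0(z_2))^2=\partial_{z_1}\partial_{z_2}\big[-\log(\underline m_n^0(z_1)-\underline m_n^0(z_2))\big]$ over nested closed curves vanishes. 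So your assertion that ``all same-side pairs contribute $O(1)$'' is false, and the gap contributes nothing. (If instead you keep both variables on the same contour $\mathcal{C}_1$, as your displayed formula suggests, the same-side integral is not even defined.) The logarithm is produced near $z_1,z_2\approx x$, which your sketch does not analyse; followed literally, your computation would give $0$ for the limiting variance.

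Your treatment of the mean is correct in spirit: the condition $nh^3\sqrt{\ln(1/h)}\to\infty$ is exactly what is needed to absorb the $O\big((nv^{5/2})^{-1}\big)$ remainder in $n(E\underline m_n-\underline m_n^0)$ after integration over the thin contour, and the paper uses it in the same way.
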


\begin{remark}
We conjecture that Theorem \ref{theo2} is still true if
we substitute $F^{c_n,H_n}(x)$ for $\int^x_{-\infty}\big[\frac{1}{h}\int
K(\frac{t-y}{h})dF^{c_n,H_n}(y)\big]dt$.
The convergence rate $n/\sqrt{\ln\frac{1}{h}}$ is consistent with the
conjectured convergence rate $n/\sqrt{\log n}$ of the empirical spectral
 distributions of sample covariance matrices to the MP type distribution.
\end{remark}

The paper is organized as follows.
Theorem \ref{theo1} is proved in Section 2 and Section 3.
In Section 4 we present the proof of Theorem \ref{theo2}.
Some technical lemmas are given in Appendix 1.  Appendix 2 deals
with Remark \ref{rem1} and Theorem \ref{rem2}
and Appendix 3 gives the derivation of the variances and means in Theorem \ref{theo1} and Theorem \ref{theo2}.

\section{Finite dimensional convergence of the processes}

Throughout the paper, to save notation, $M$ may stand for different
constants on different occasions. This and the subsequent sections deal with Theorem \ref{theo1} and the argument for handling $nh\Big(\frac{1}{h}\int^b_aK(\frac{x-y}{h})dF^{c_n,H_n}(y)-f_{c_n,H_n}(x)\Big)$ is given at the end of Appendix 2.

 Following the truncation steps in \cite{b2} we may
truncate and re-normalize the random variables as follows
\begin{equation}\label{f46}
|X_{ij}|\leq \tau_nn^{1/2},\  EX_{ij=0},\  EX_{ij}^2=1,
\end{equation}
where $\tau_nn^{1/3}\rightarrow\infty$. Based on this one may then
verify that
\begin{equation}\label{f49}
EX_{11}^4=3+O(\frac{1}{n}).
\end{equation}

For any finite constants $l_1,\cdots,l_r$ and $x_1,\cdots,x_r\in [a,b]$, by Cauchy's formula
\begin{eqnarray}
&&nh\Big(\sum\limits_{j=1}^rl_j\big(f_n(x_j)-\frac{1}{h}\int
K(\frac{x_j-y}{h})dF^{c_n,H_n}(y)\big)\Big)\non &=&-\frac{1}{2\pi
i}\oint_{\mathcal{C}_1}
\sum\limits_{j=1}^rl_jK(\frac{x_j-z}{h})X_n(z)dz,\label{a3*}
\end{eqnarray}
where $X_n(z)=tr(\bbA_n-z\bbI)^{-1}-nm_{F^{^{c_n,H_n}}}(z)$ and $\mathcal{C}_1$ is defined in the introduction.

 From Fubini's theorem and (\ref{f63}) we obtain for $j=0,1,2$.
$$
\int^{a_r}_{a_l}\Big[\frac{1}{h}\int^{v_0}_{0}|K^{(j)}(\frac{x-u}{h}+iv)|dv\Big]du=\int^{v_0}_{0}\Big[\frac{1}{h}\int^{a_r}_{a_l}|K^{(j)}(\frac{x-u}{h}+iv)|du\Big]dv<\infty.
$$
This implies for $u\in [a_l,a_r]$
\begin{equation}
\label{g3}\frac{1}{h}\int^{v_0}_{0}|K^{(j)}(\frac{x-u}{h}+iv)|dv<\infty,\quad j=0,1,2.\end{equation}

For the sake of simplicity, write $\bbA=\bbA_n$. We now introduce some notation.
 Define $\bbA(z)=\bbA-z\bbI$,
$\bbA_k(z)=\bbA(z)-\bbs_k\bbs_k^T$, and $\bbs_k=\bbT^{1/2}\bbx_k$
with $\bbx_k$ being the $k$th column of $\bbX_n$. Let
$E_k=E(\cdot|\bbs_1,\cdots,\bbs_k)$ and $E_0$ denote the
expectation. Set
$$
\beta_k(z)=\frac{1}{1+\bbs_k^T\bbA_k^{-1}(z)\bbs_k},\
\eta_k(z)=
\bbs_k^T\bbA_k^{-1}(z)\bbs_k-\frac{1}{n}tr\bbT\bbA_k^{-1}(z),
$$
$$
b_1(z)=\frac{1}{1+Etr\bbT\bbA_1^{-1}(z)/n},\
\beta_k^{tr}(z)=\frac{1}{1+tr\bbT\bbA_k^{-1}(z)/n}.
$$
We frequently use the following equalities:
\begin{equation}
\label{b23}\bbA^{-1}(z)-\bbA_k^{-1}(z)=-\beta_k(z)\bbA_k^{-1}(z)\bbs_k\bbs_k^T\bbA_k^{-1}(z);
\end{equation}
\begin{equation}
\label{b22}\beta_1=b_1-b_1\beta_1\xi_1(z)=b_1-b_1^2\xi_1(z)+b_1^2\beta_1\xi_1^2(z)
\end{equation}
where
$\xi_1(z)=\bbs_1^T\bbA_1^{-1}(z)\bbs_1-En^{-1}tr\bbA_1^{-1}(z)\bbT$.
At this moment, we would point out that the length of the vertical
lines of the contour of integral in (\ref{a3*}) converges to zero. As
a consequence, except $|b_1(z)|$ we can not expect $|\beta_k(z)|$
and $|\beta_k^{tr}(z)|$ to be bounded above by constants although
they are bounded by $|z|/|v|$ (see \cite{b4}) (of course $v\neq 0$ in the cases of interest). Instead, the moments of
$\beta_k(z)$ and $\beta_k^{tr}(z)$ are proved to be bounded. We
summarize such estimates in Lemma \ref{lem1} in Appendix 1. Sometimes we deal with
the term $\frac{1}{n}tr\bbT\bbA_1^{-1}(z)\bbT\bbA_1^{-1}(\bar z)$ in
the following way: One may verify that
$$
Im(1+\frac{1}{n}tr\bbT\bbA_k^{-1}(z))\geq v\lambda_{\min}(\bbT)
\frac{1}{n}tr\bbA_k^{-1}(z)\bbA_k^{-1}(\bar z),
$$
which implies that
\begin{equation}\label{f44}
|\beta^{tr}_k(z)\frac{1}{n}tr\bbT\bbA_k^{-1}(z)\bbT\bbA_k^{-1}(\bar
z)|\leq\frac{M}{|v|}.
\end{equation}
We also frequently use the fact that $\|\bbA_k^{-1}(z)\|\leq 1/|v|$.

Write
$$
tr\bbA^{-1}(z)-Etr\bbA^{-1}(z)=\sum\limits_{k=1}^n\Big(E_ktr \bbA^{-1}(z)-E_{k-1}tr \bbA^{-1}(z)\Big)
$$
$$=\sum\limits_{k=1}^n\Big(E_k-E_{k-1}\Big)tr\Big[\bbA^{-1}(z)-\bbA_k^{-1}(z)\Big]
=-\sum\limits_{k=1}^n\Big(E_k-E_{k-1}\Big)\Big[\beta_k(z)\bbs_k^T\bbA_k^{-2}(z)\bbs_k\Big]
$$
\begin{equation}
=-\sum\limits_{k=1}^n\Big(E_k-E_{k-1}\Big)\Big[\log\beta_k(z)\Big]',\label{g39}
\end{equation}
where in the third step one uses (\ref{b23}) and the derivative in the last equality is with respect to $z$.
We then obtain from integration by parts that
\begin{equation}\label{b12}
\frac{1}{2\pi i}\oint
K(\frac{x-z}{h})(tr\bbA^{-1}(z)-Etr\bbA^{-1}(z))dz
\end{equation}
$$
=-\frac{1}{2\pi i}\sum\limits_{k=1}^n(E_k-E_{k-1})\oint
K(\frac{x-z}{h})\Big[\log\beta_k(z)\Big]'dz
$$
\begin{equation}\label{h6}
=\frac{1}{h}\frac{1}{2\pi i}\sum\limits_{k=1}^n(E_k-E_{k-1})\oint
K'(\frac{x-z}{h})\log \Big(\frac{\beta^{tr}_k(z)}{\beta_k(z)}\Big)dz
\end{equation}
$$
=\frac{1}{h}\frac{1}{2\pi i}\sum\limits_{k=1}^n(E_k-E_{k-1})\oint
K'(\frac{x-z}{h})\log \Big(1+\beta^{tr}_k(z)\eta_k(z)\Big)dz
$$
\begin{equation}
=\frac{1}{h}\frac{1}{2\pi i}\sum\limits_{k=1}^n(E_k-E_{k-1})\oint
K'(\frac{x-z}{h})\Big(\beta^{tr}_k(z)\eta_k(z)+e_k(z)\Big)dz\label{f41}
\end{equation}
where
$$
e_k(z)=\log (1+\beta^{tr}_k(z)\eta_k(z))-\beta^{tr}_k(z)\eta_k(z).
$$

Below, consider $z\in\gamma_2$, the top horizontal line of the contour, unless it is further specified. We remind readers that $v=v_0h$ on $\gamma_2$. The next aim is to prove that
\begin{equation}\label{f2}
\frac{1}{h}\sum\limits_{k=1}^n(E_k-E_{k-1})\int
K'(\frac{x-z}{h})e_k(z)dz\stackrel{i.p.}\longrightarrow0.
\end{equation}
By Lemma \ref{lem8}, we have for $m=2,4,6$
\begin{equation}\label{g1}
E\Big(|\eta_k(z)|^m|\bbA_k^{-1}(z)\Big)\leq \frac{M}{n^{m/2}}
\Big[\frac{1}{n}tr\bbT\bbA_k^{-1}(z)\bbT\bbA_k^{-1}(\bar
z)\Big]^{m/2}.
\end{equation}
This,  together with Lemma \ref{lem1} in Appendix 1 and (\ref{f44}), gives
\begin{equation}\label{h1}
E|\beta^{tr}_k(z)\eta_k(z)|^4=E\Big(|\beta^{tr}_k(z)|^4E(|\eta_k(z)|^4|\bbA_k^{-1}(z))\Big)\leq\frac{M}{n^2v^2}.
\end{equation}
It follows that
$$
\sum\limits_{k=1}^nP(|\beta^{tr}_k(z)\eta_k(z)|\geq 1/2)\leq
2^4\sum\limits_{k=1}^nE|\beta^{tr}_k(z)\eta_k(z)|^4\leq
\frac{M}{nv^2}.
$$
 Via (\ref{f63}), (\ref{h1}) and the inequality
  \begin{equation}|\label{h9}
  \log (1+x)-x|\leq
M|x|^2, \ \mbox{for}\ |x|\leq 1/2,
\end{equation}  we obtain
$$
\frac{1}{h^2}E\Big|\sum\limits_{k=1}^n(E_k-E_{k-1})\int
K'(\frac{x-z}{h})e_k(z)I(|\beta^{tr}_k(z)\eta_k(z)|< 1/2)du\Big|^2
$$
\begin{equation}\label{g2}
\leq \frac{M}{h^2}\sum\limits_{k=1}^nE\Big|\int
K'(\frac{x-z}{h})e_k(z)I(|\beta^{tr}_k(z)\eta_k(z)|< 1/2)du\Big|^2
\end{equation}
$$
\leq \frac{M}{h^2}\sum\limits_{k=1}^n\Big[\int\int
|K'(\frac{x-z_1}{h})K'(\frac{x-z_2}{h})|\Big(E|(\beta^{tr}_k(z_1)\eta_k(z_1))|^4
$$$$\times E|(\beta^{tr}_k(z_2)\eta_k(z_2))|^4\Big)^{1/2}du_1du_2\Big]
\leq\frac{M}{nv^2}.
$$
Thus, (\ref{f2}) is proven. Therefore on $\gamma_2$
\begin{equation}\label{g41}
(\ref{b12})=\frac{1}{2\pi i}\sum\limits_{k=1}^nY_k(x)+o_p(1),
\end{equation}
where
$$
Y_k(x)=E_k\Big[\frac{1}{h}\int
K'(\frac{x-z}{h})\Big(\beta^{tr}_k(z)\eta_k(z)\Big)dz\Big].
$$

Apparently, $Y_k(z)$ is a martingale difference so that we may
resort to the CLT for martingale (see Theorem 35.12 in \cite{bili}). As in (\ref{g2}), by (\ref{f63}) and (\ref{h1})
we have
$$
\sum\limits_{k=1}^nE|Y_k(z)|^4\leq\frac{M}{nv^2}.
$$
which ensures the Lyapunov condition for the CLT is
satisfied.

Thus, it is sufficient to investigate the limit of the following
covariance function
\begin{eqnarray}
&&-\frac{1}{4\pi^2}\sum\limits_{k=1}^nE_{k-1}[Y_k(x_1)Y_k(x_2)] \non
&=&-\frac{1}{4h^2\pi^2}\int\int K'(\frac{x_1-z_1}{h})
K'(\frac{x_2-z_2}{h})\mathcal{C}_{n1}(z_1,z_2)dz_1dz_2,\label{f48}
\end{eqnarray}
where
$$
\mathcal{C}_{n1}(z_1,z_2)=\sum\limits_{k=1}^nE_{k-1}\Big[E_k\Big(\beta^{tr}_k(z_1)\eta_k(z_1)\Big)E_k\Big(\beta^{tr}_k(z_2)\eta_k(z_2)\Big)\Big].
$$


By (\ref{f44}), (\ref{g1}) and (\ref{b17})
\begin{eqnarray}
&&E\Big(|(\beta_k^{tr}(z)-b_1(z))\eta_k(z)|^2\Big|\bbA_k^{-1}(z)\Big)\non
&\leq&
\frac{M}{n^3}E\Big(|\beta_k^{tr}(z)b_1(z)(tr\bbA^{-1}(z)\bbT-Etr\bbA^{-1}(z)\bbT)|^2\non
&&\quad \times \frac{1}{n}tr\bbT\bbA_k^{-1}(z)\bbT\bbA_k^{-1}(\bar
z)\Big|\bbA_k^{-1}(z)\Big)\non &\leq
&\frac{M}{n^3v}|\beta_k^{tr}(z)||tr\bbA^{-1}(z)\bbT-Etr\bbA^{-1}(z)\bbT|^2.
\end{eqnarray}
This and Lemma \ref{lem1} lead to
$$
E\Big|(\beta_k^{tr}(z_1)-b_1(z_1))\eta_k(z_1)E_k\Big((\beta_k^{tr}(z_2)-b_1(z_2))\eta_k(z_2)\Big)\Big|
$$$$\leq \Big[E|(\beta_k^{tr}(z_1)-b_1(z_1))\eta_k(z_1)|^2E|(\beta_k^{tr}(z_2)-b_1(z_2))\eta_k(z_2)|^2\Big]^{1/2}
 \leq\frac{M}{n^3v^4}
$$
and
$$
E\Big|(\beta_k^{tr}(z_1)-b_1(z_1))\eta_k(z_1)E_k(\eta_k(z_2))\Big|
\leq\frac{M}{n^2v^{5/2}}.
$$
It follows that
\begin{equation}\label{g5}
E\Big|\mathcal{C}_n(z_1,z_2)-b_1(z_1)b_1(z_2)\sum\limits_{k=1}^nE_{k-1}\Big(E_k\eta_k(z_1)E_k\eta_k(z_2)\Big)\Big|\leq\frac{M}{nv^{5/2}}.
\end{equation}

Note that for any non-random matrices $\bbB$ and $\bbC$
\begin{eqnarray} \label{i1}
&&E(\bbs_1^T\bbC \bbs_1-tr\bbC)(\bbs_1^T\bbB\bbs_1-tr\bbB) \non & =&
n^{-2}(EX_{11}^4-|EX_{11}^2|^2-2)\sum\limits_{i=1}^p(\bbT^{1/2}\bbC\bbT^{1/2})_{ii}(\bbT^{1/2}\bbB\bbT^{1/2})_{ii}\non
&&+|EX_{11}^2|^2n^{-2}tr\bbT^{1/2}\bbC\bbT\bbB^T\bbT^{1/2}+n^{-2}tr\bbT^{1/2}\bbC\bbT\bbB\bbT^{1/2}.
\end{eqnarray}
This implies that
\begin{eqnarray}
&&b_1(z_1)b_1(z_2)\sum\limits_{k=1}^nE_{k-1}\Big(E_k\eta_k(z_1)E_k\eta_k(z_2)\Big)
\non
&=&(EX_{11}^4-3)b_1(z_1)b_1(z_2)\mathcal{C}_{n1}(z_1,z_2)+2b_1(z_1)b_1(z_2)\mathcal{C}_{n2}(z_1,z_2)\label{g7}\\
&=&2b_1(z_1)b_1(z_2)\mathcal{C}_{n2}(z_1,z_2)+O(\frac{1}{nv^2}),\label{g6}
\end{eqnarray}
where
$$
\mathcal{C}_{n1}(z_1,z_2)=\frac{1}{n^2}\sum\limits_{k=1}^n\sum\limits_{j=1}^p(E_k(\bbT^{1/2}\bbA_k^{-1}(z_1)\bbT^{1/2})_{jj}E_k(\bbT^{1/2}\bbA_k^{-1}(z_2)\bbT^{1/2})_{jj},
$$
$$
\mathcal{C}_{n2}(z_1,z_2)=\frac{1}{n^2}\sum\limits_{k=1}^ntr\bbT^{1/2}E_k(\bbA_k^{-1}(z_1))\bbT
E_k(\bbA_k^{-1}(z_2))\bbT^{1/2},
$$
and in the last step one uses (\ref{f49}) and the fact that
$|(E_k(\bbT^{1/2}\bbA_k^{-1}(z_1)\bbT^{1/2})_{jj}|\leq\frac{1}{v}$.

The next aim is to convert the random variables involved in
$\mathcal{C}_{n2}(z_1,z_2)$ to their corresponding expectations. To
this end, we introduce more notation and estimates, and establish a
lemma. Define
$$\bbA_{kj}(z)=\bbA(z)-\bbs_k\bbs_k^T-\bbs_j\bbs_j^T,
\beta_{kj}(z)=\frac{1}{1+\bbs_j^T\bbA^{-1}_{kj}(z)\bbs_j},
$$
 $$
b_{12}(z)=\frac{1}{1+n^{-1}Etr\bbT\bbA^{-1}_{12}(z)}, \quad
\beta_{kj}^{tr}(z)=\frac{1}{1+n^{-1}tr\bbA^{-1}_{kj}(z)}
$$
and
$$
\xi_{kj}(z)=\bbs_j^T\bbA^{-1}_{kj}(z)\bbs_j-En^{-1}tr\bbA^{-1}_{kj}(z)\bbT,\quad
\eta_{kj}(z)=\bbs_j^T\bbA^{-1}_{kj}(z)\bbs_j-n^{-1}tr\bbA^{-1}_{kj}(z)\bbT
.
$$
Note that
\begin{equation}\label{f6}
\bbA_{k}^{-1}(z)-\bbA_{kj}^{-1}(z)=-\beta_{kj}(z)\bbA_{kj}^{-1}(z)\bbs_j\bbs_j^T\bbA_{kj}^{-1}(z)
\end{equation}
and (see Lemma 2.10 of \cite{b4}) for any $p\times p$ matrix $\bbD$
\begin{equation}\label{f10}
|tr(\bbA_k^{-1}(z)-\bbA_{kj}^{-1}(z))\bbD|\leq\frac{\|\bbD\|}{v}.
\end{equation}
By Lemma \ref{lem1} in Appendix 1 and (\ref{f10}) we have
\begin{equation}\label{g9}
\frac{1}{n^4}E|tr\bbA_{kj}^{-1}(z)\bbT-Etr\bbA_{kj}^{-1}(z)\bbT|^8\leq\frac{M}{n^8v^{12}\|\bbT\|^8},\quad
E|\xi_{kj}(z)|^8\leq\frac{M}{n^4v^4}.
\end{equation}
By Lemma \ref{lem1} we have
\begin{equation}\label{f14}
E\frac{1}{n}tr\bbA^{-1}(z)\bbA^{-1}(\bar
z)=\frac{1}{v}Im(E\frac{1}{n}tr\bbA^{-1}(z))\leq\frac{M}{v},
\end{equation}
which, together with (\ref{f10}), implies that
\begin{equation}\label{f15}
E\frac{1}{n}tr\bbA_{kj}^{-1}(z)\bbA_{kj}^{-1}(\bar
z)\leq\frac{M}{v}.
\end{equation}
By (\ref{f10}) and the fact that $b_1(z)$ is bounded, given in Lemma
\ref{lem1}, it is straightforward to verify that
$|b_1(z)-b_{12}(z)|\leq \frac{1}{nv^2}$ and hence
\begin{equation}\label{f3*}
|b_{12}(z)|\leq M.
\end{equation}

We are now in a position to state Lemma \ref{lem7}:
\begin{lemma}\label{lem7}
For non-random matrix $\bbD$
\begin{eqnarray}
&&E\Big|\frac{1}{n}tr\bbD\bbA_k^{-1}(z_1)\bbT
E_k(\bbA_k^{-1}(z_2))-E\Big(\frac{1}{n}tr\bbD\bbA_k^{-1}(z_1)\bbT
E_k(\bbA_k^{-1}(z_2))\Big)\Big|^2\non &\ & \ \leq
\begin{cases}
 \frac{M}{n^2v^5} &\text{when $\|\bbD\|\leq M$} \\
           \frac{M}{n^2v^6} &\text{when $\frac{1}{n}tr\bbD\bbD^*\leq
           M$}.
\end{cases}\label{f5}
\end{eqnarray}
\end{lemma}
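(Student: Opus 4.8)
The plan is to run the standard martingale argument for this kind of quantity, the delicate point being the bookkeeping of powers of $v$. Fix $k$ and abbreviate $T=\frac1n tr\bbD\bbA_k^{-1}(z_1)\bbT E_k(\bbA_k^{-1}(z_2))$. Since $\bbA_k(z)$ does not involve $\bbs_k$, $T$ is a function of $\{\bbs_i:i\ne k\}$ only, so $E_kT=E_{k-1}T$ and hence $T-ET=\sum_{j\ne k}(E_j-E_{j-1})T$. By orthogonality of martingale differences, $E|T-ET|^2=\sum_{j\ne k}E|(E_j-E_{j-1})T|^2$, so it suffices to bound each summand by $M/(n^3v^5)$ (resp. $M/(n^3v^6)$) and to note that there are fewer than $n$ of them.

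For each $j\ne k$ I would introduce the ``leave $\bbs_j$ out'' surrogate $T^{(j)}:=\frac1n tr\bbD\bbA_{kj}^{-1}(z_1)\bbT E_k(\bbA_{kj}^{-1}(z_2))$. Both $\bbA_{kj}^{-1}(z_1)$ and $E_k(\bbA_{kj}^{-1}(z_2))$ are free of $\bbs_j$ (for $j<k$ because $\bbA_{kj}$ omits $\bbs_j$, for $j>k$ because $E_k$ integrates $\bbs_j$ out), so $(E_j-E_{j-1})T^{(j)}=0$ and hence $E|(E_j-E_{j-1})T|^2\le E|T-T^{(j)}|^2$. Next I would expand $T-T^{(j)}$ by the resolvent identity (\ref{f6}) applied to the $z_1$- and the $z_2$-resolvent; this writes it as a bounded number of terms, each of the form $\frac1n\beta_{kj}(z_a)\,\bbs_j^{T}W\bbs_j$ with $W$ a product of factors $\bbA_{kj}^{-1}$, $\bbT$, $E_k(\bbA_k^{-1}(z_2))$ and a single copy of $\bbD$ (when both resolvents get expanded there is also one term carrying $\beta_{kj}(z_1)\beta_{kj}(z_2)$ and two bilinear forms in $\bbs_j$). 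In each term I would replace $\beta_{kj}$ by $b_{12}$, bounded by (\ref{f3*}), plus the correction $-b_{12}\beta_{kj}\xi_{kj}$ coming from the identity underlying (\ref{b22}), and split every $\bbs_j^{T}W\bbs_j$ into its $\bbs_j$-conditional mean $\frac1n tr W\bbT$ and a centred remainder; for $j>k$ the mean part is $\bbs_j$-free and is annihilated by $E_j-E_{j-1}$, while for $j<k$ it survives and one further expansion of $E_k(\bbA_k^{-1}(z_2))$ is needed.

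The estimates use only the tools already assembled: the centred bilinear forms are controlled by the Lemma~\ref{lem8}-type conditional bounds of the form (\ref{g1}) together with the exact second-moment formula (\ref{i1}); the factors $\xi_{kj}$ and the $\beta$-corrections by (\ref{g9}) and the moment bounds of Lemma~\ref{lem1}; and the remaining normalized traces of products of resolvents, $\bbT$ and at most one $\bbD$ by (\ref{f10}), (\ref{f14}), (\ref{f15}) and $\|\bbA_{kj}^{-1}(z)\|\le1/v$. For the dominant term one arrives at $E\bigl|\frac1n b_{12}(z_1)\bigl(\bbs_j^{T}E_j(W)\bbs_j-\frac1n tr E_j(W)\bbT\bigr)\bigr|^2\le\frac{M}{n^3}\,E\,\frac1n tr(W\bbT W^{*}\bbT)$ with $W=\bbA_{kj}^{-1}(z_1)\bbT E_k(\bbA_k^{-1}(z_2))\bbD\bbA_{kj}^{-1}(z_1)$; grouping the four $z_1$-resolvents of $W\bbT W^{*}\bbT$ into two psd pairs and invoking (\ref{f15}) gives $\frac1n tr(W\bbT W^{*}\bbT)\le M\|\bbD\|^2/v^5$ when $\|\bbD\|\le M$, whereas when only $\frac1n tr\bbD\bbD^{*}\le M$ one must instead extract the two copies of $\bbD$ through a Hilbert--Schmidt bound $|tr(\bbD P\bbD^{*}Q)|\le\|P\|\,\|Q\|\,tr(\bbD\bbD^{*})$, which costs one extra power of $v$ and yields $M/v^6$. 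The remaining terms (the $\xi_{kj}$- and $b_{12}$-corrections, and for $j<k$ the term with two bilinear forms in $\bbs_j$) are checked by the same inequalities plus Cauchy--Schwarz to be of the same order or smaller; summing over the $<n$ values of $j$ produces the two bounds stated.

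The step I expect to be the main obstacle is precisely this $v$-accounting: one must invoke the ``bounded spectral density'' estimates (\ref{f14})--(\ref{f15}) at exactly the places where two conjugate resolvents sit under a normalized trace, since replacing them by the crude operator-norm bound $\|\bbA_{kj}^{-1}\|\le1/v$ would lose a power of $v$ in each case. A secondary nuisance is the split $j<k$ versus $j>k$: for $j<k$ the conditional expectation $E_k$ does not average $\bbs_j$ out of the $z_2$-resolvent, so the mean part of the bilinear form no longer vanishes and the term with two bilinear forms in $\bbs_j$ genuinely appears, forcing the extra expansion of $E_k(\bbA_k^{-1}(z_2))$ and a longer, though structurally identical, estimation.
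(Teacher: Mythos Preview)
Your plan is essentially the paper's own proof: martingale decomposition over $j\ne k$, subtraction of the leave-$\bbs_j$-out surrogate $T^{(j)}$, resolvent identity (\ref{f6}) to produce the three pieces $\delta_1,\delta_2,\delta_3$, expansion of $\beta_{kj}$ via (\ref{f13}), and the $v$-accounting through (\ref{f15}) and (\ref{f7}). Two small points deserve attention.

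First, a minor logical slip: after writing $E|(E_j-E_{j-1})T|^2\le E|T-T^{(j)}|^2$ you go on to say that ``for $j>k$ the mean part \ldots\ is annihilated by $E_j-E_{j-1}$''. These are incompatible as written: if you have already passed to $E|T-T^{(j)}|^2$ the operator $E_j-E_{j-1}$ is gone and cannot kill anything. What you actually want (and what the paper does) is to keep $(E_j-E_{j-1})(T-T^{(j)})$ and expand inside it, so that trace terms free of $\bbs_j$ genuinely disappear.

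Second, and more substantively, the device the paper uses to carry out the ``extra expansion for $j<k$'' that you allude to is an \emph{independent-copies} realization of $E_k$: for $j<k$ one introduces i.i.d.\ copies $\underline{\bbs}_{j+1},\ldots,\underline{\bbs}_n$ of the columns and builds $\underline{\bbA}_{kj}^{-1}(z_2),\underline{\beta}_{kj}(z_2),\underline{\xi}_{kj}(z_2)$ from them, so that $(E_j-E_{j-1})\delta_1=(E_j-E_{j-1})\big[\beta_{kj}(z_1)\underline{\beta}_{kj}(z_2)\,\alpha_{k1}\alpha_{k2}\big]$ with $\alpha_{k1}=\bbs_j^T\bbA_{kj}^{-1}(z_1)\bbT\underline{\bbA}_{kj}^{-1}(z_2)\bbs_j$ and $\alpha_{k2}=\bbs_j^T\underline{\bbA}_{kj}^{-1}(z_2)\bbD\bbA_{kj}^{-1}(z_1)\bbs_j$. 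The point is that for $j<k$ the matrix $E_k\big(\beta_{kj}(z_2)\bbA_{kj}^{-1}(z_2)\bbs_j\bbs_j^T\bbA_{kj}^{-1}(z_2)\big)$ still contains $\bbs_j$ sandwiched between random matrices correlated through $E_k$, and one cannot simply split this into a product of conditional means; the independent copies make the two bilinear forms in $\bbs_j$ explicit with coefficient matrices that are genuinely independent of $\bbs_j$, so that Lemma~\ref{lem8} and (\ref{i1}) apply cleanly. Your outline is otherwise correct, including the identification of (\ref{f15}) as the place where one buys back a power of $v$ and the Hilbert--Schmidt extraction of $\bbD$ in the $\frac1n tr\bbD\bbD^*\le M$ case.
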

\begin{remark}
Checking on the argument of Lemma \ref{lem7}, we see that Lemma
\ref{lem7} holds as well when we replace $E_k(\bbA_k^{-1}(z_2))$ by
$\bbA_k^{-1}(z_2)$. The main difference of arguments is that we do
not need to distinguish between the cases $j<k$ and $j>k$ when
dealing with the latter.
\end{remark}

\begin{proof} We begin with a martingale decomposition of random variable of
interest:
$$
\frac{1}{n}tr\bbD\bbA_k^{-1}(z_1)\bbT
E_k(\bbA_k^{-1}(z_2))-E\Big(\frac{1}{n}tr\bbD\bbA_k^{-1}(z_1)\bbT
E_k(\bbA_k^{-1}(z_2))\Big)
$$
$$
=\frac{1}{n}\sum\limits_{j\neq
k}^{n}(E_j-E_{j-1})\Big[tr\bbD\bbA_k^{-1}(z_1)\bbT
E_k(\bbA_k^{-1}(z_2))\Big]
$$
$$
=\frac{1}{n}\sum\limits_{j\neq
k}^{n}(E_j-E_{j-1})\Big[tr\bbD\bbA_k^{-1}(z_1)\bbT
E_k(\bbA_{k}^{-1}(z_2))-tr\bbD\bbA_{kj}^{-1}(z_1)\bbT
E_k(\bbA_{kj}^{-1}(z_2))\Big]
$$
$$
=\frac{1}{n}\sum\limits_{j\neq
k}^{n}(E_j-E_{j-1})(\delta_1+\delta_2+\delta_3),
$$
where, via (\ref{f6}),
$$
\delta_1=\beta_{kj}(z_1)\bbs_j^T\bbA_{kj}^{-1}(z_1)\bbT
E_k\Big(\beta_{kj}(z_2)\bbA_{kj}^{-1}(z_2)\bbs_j\bbs_j^T\bbA_{kj}^{-1}(z_2)\Big)\bbD\bbA_{kj}^{-1}(z_1)\bbs_j
$$
$$
\delta_2=-\beta_{kj}(z_1)\bbs_j^T\bbA_{kj}^{-1}(z_1)\bbT
E_k\Big(\bbA_{kj}^{-1}(z_2)\Big)\bbD\bbA_{kj}^{-1}(z_1)\bbs_j
$$
and
$$
\delta_3=-tr\bbD\bbA_{kj}^{-1}(z_1)\bbT
E_k\Big(\beta_{kj}(z_2)\bbA_{kj}^{-1}(z_2)\bbs_j\bbs_j^T\bbA_{kj}^{-1}(z_2)\Big).
$$

Note that
\begin{equation}\label{f7}
|\beta_{kj}|\|\bbs_j^T\bbA_{kj}^{-1}(z)\|^2=|\beta_{kj}\bbs_j^T\bbA_{kj}^{-1}(z)\bbA_{kj}^{-1}(\bar
z)\bbs_j|\leq \frac{1}{v}
\end{equation}
which implies that
\begin{equation}\label{f12}
|\delta_1|\leq\frac{1}{v^2\|\bbD\|}.
\end{equation}

Write
\begin{equation}\label{f13}
\beta_{kj}(z)=b_{12}(z)-\beta_{kj}(z)b_{12}(z)\xi_{kj}(z)=b_{12}(z)-b_{12}^2(z)\xi_{kj}(z)+\beta_{kj}(z)b_{12}^2(z)\xi^2_{kj}(z).
\end{equation}
This implies that when $j>k$,
$$
(E_j-E_{j-1})\delta_1=(E_j-E_{j-1})b_{12}(z_1)(\delta_{11}-\delta_{12}),
$$
where $\delta_{12}=\xi_{kj}(z_1)\delta_1$ and
$$
\delta_{11}=\bbs_j^T\bbA_{kj}^{-1}(z_1)\bbT
E_k\Big(\beta_{kj}(z_2)\bbG_k(z_2)\Big)\bbD\bbA_{kj}^{-1}(z_1)\bbs_j
$$$$\qquad \qquad\qquad-n^{-1}tr\bbT\bbA_{kj}^{-1}(z_1)\bbT
E_k\Big(\beta_{kj}(z_2)\bbG_k(z_2)\Big)\bbD\bbA_{kj}^{-1}(z_1)
$$
 with
$\bbG_k(z_2)=\bbA_{kj}^{-1}(z_2)\bbs_j\bbs_j^T\bbA_{kj}^{-1}(z_2)$.
When $\|\bbD\|\leq M$ we conclude from (\ref{f7}), (\ref{f12}), (\ref{f15}) and Lemma \ref{lem8} that
$$
E|\frac{1}{n}\sum\limits_{j\neq
k}^{n}(E_j-E_{j-1})(\delta_{11}+\delta_{12})|^2\leq
\frac{1}{n^2}\sum\limits_{j\neq
k}^{n}E|\delta_{11}|^2+E|\delta_{12}|^2\leq\frac{M}{n^2v^5\|\bbD\|^2}.
$$
When $\frac{1}{n}tr\bbD\bbD^*\leq M$, by Lemma \ref{lem8} and
(\ref{f7})
$$
E|\delta_{11}|^8\leq \frac{M}{n^4v^{24}}(\frac{1}{n}tr\bbD\bbD^*)^4\leq \frac{M}{n^4v^{24}}.
$$
This, together with (\ref{g9}) and Lemma \ref{lem1}, implies
$$
E|\delta_{12}|^2\leq ME|\xi_{kj}\beta_{kj}\delta_{11}|^2+\frac{M}{n^2}E|\xi_{kj}\beta_{kj}tr\bbT\bbA_{kj}^{-1}(z_1)\bbT
E_k\Big(\beta_{kj}(z_2)\bbG_k(z_2)\Big)\bbD\bbA_{kj}^{-1}(z_1)|^2
$$
$$
\leq \frac{M}{nv^6},
$$
because via (\ref{f7}) and Holder's inequality
\begin{eqnarray}
&&\frac{1}{n^4}E|tr\bbT\bbA_{kj}^{-1}(z_1)\bbT
E_k\Big(\beta_{kj}(z_2)\bbG_k(z_2)\Big)\bbD\bbA_{kj}^{-1}(z_1)|^8\non
&\leq&\frac{M}{v^{16}}E(\frac{1}{n}tr\bbA_{kj}^{-1}(z_1)\bbA_{kj}^{-1}(\bar z_1))^4(\frac{1}{n}tr\bbD\bbD^*)^4\non
&\leq& \frac{M}{v^{20}}E|\frac{1}{n}tr\bbA_{kj}^{-1}(z_1)-E\frac{1}{n}tr\bbA_{kj}^{-1}(z_1)|^4+\frac{M}{v^{20}}|E\frac{1}{n}tr\bbA_{kj}^{-1}(z_1)|^4\leq
\frac{M}{v^{20}}\label{g18}.
\end{eqnarray}
These give
$$
E|\frac{1}{n}\sum\limits_{j\neq
k}^{n}(E_j-E_{j-1})(\delta_{11}+\delta_{12})|^2\leq
\frac{1}{n^2}\sum\limits_{j\neq
k}^{n}E|\delta_{11}|^2+E|\delta_{12}|^2\leq\frac{M}{n^2v^6}.
$$

For handling the case $j<k$, we define
$\underline{\bbA}_{kj}^{-1}(z),\underline{\beta}_{kj}(z)$ and
$\underline{\xi}_{kj}(z)$ using
$\bbs_1,\cdots,\bbs_{j-1},\underline{\bbs}_{j+1},\cdots,\underline{\bbs}_{k-1},\underline{\bbs}_{k+1},\cdots,\underline{\bbs}_n$
as $\bbA_{kj}^{-1}(z),\beta_{kj}(z)$ and $\xi_{kj}(z)$ are defined
using
$\bbs_1,\cdots,\bbs_{j-1},\bbs_{j+1},\cdots,\bbs_{k-1},\bbs_{k+1},\cdots,\bbs_n$.
Here $\underline{\bbs}_{1},\cdots,\underline{\bbs}_{n}$ are i.i.d.
copies of $\bbs_1$ and independent of $\{\bbs_j,j=1,\cdots,n\}$. Let
$$\alpha_{k1}=\bbs_j^T\bbA_{kj}^{-1}(z_1)\bbT
\underline{\bbA}_{kj}^{-1}(z_2)\bbs_j,\quad
\alpha_{k2}=\bbs_j^T\bbA_{kj}^{-1}(z_1)\bbD
\underline{\bbA}_{kj}^{-1}(z_2)\bbs_j.$$  Applying (\ref{f13}) and
the equality for $\underline{\beta}_{kj}(z_2)$ similar to
(\ref{f13}) yields
$$
(E_j-E_{j-1})\delta_1=(E_j-E_{j-1})\Big[\beta_{kj}(z_1)\underline{\beta}_{kj}(z_2)\alpha_{k1}
\alpha_{k2}\Big]
$$
$$
=b_{12}(z_1)b_{12}(z_2)[\delta_{13}+\delta_{14}+\delta_{15}+\delta_{16}+\delta_{17}+\delta_{18}],
$$
where
$$
\delta_{13}=(E_j-E_{j-1})\Big(\zeta_{kj1}\zeta_{kj2}\Big),
\delta_{14}=(E_j-E_{j-1})\Big(\zeta_{kj1}n^{-1}tr\bbA_{kj}^{-1}(z_1)\bbD
\underline{\bbA}_{kj}^{-1}(z_2)\bbT\Big),
$$
$$
\delta_{15}=(E_j-E_{j-1})\Big(\zeta_{kj2}n^{-1}tr\bbA_{kj}^{-1}(z_1)\bbT
\underline{\bbA}_{kj}^{-1}(z_2)\bbT\Big),
$$
$$
\delta_{16}=-(E_j-E_{j-1})\Big[\beta_{kj}(z_1)\xi_{kj}(z_1)\alpha_{k1}
\alpha_{k2}\Big],
$$
$$
\delta_{17}=-(E_j-E_{j-1})\Big[\underline{\beta}_{kj}(z_2)\underline{\xi}_{kj}(z_2)\alpha_{k1}
\alpha_{k2}\Big]
$$
and
$$
\delta_{18}=(E_j-E_{j-1})\Big[\beta_{kj}(z_1)\underline{\beta}_{kj}(z_2)\xi_{kj}(z_1)\underline{\xi}_{kj}(z_2)\alpha_{k1}
\alpha_{k2}\Big],
$$
with
$$
\zeta_{kj1}=\alpha_{k1}-n^{-1}tr\bbA_{kj}^{-1}(z_1)\bbT
\underline{\bbA}_{kj}^{-1}(z_2)\bbT,\quad
\zeta_{kj2}=\alpha_{k2}-n^{-1}tr\bbA_{kj}^{-1}(z_1)\bbD
\underline{\bbA}_{kj}^{-1}(z_2)\bbT.
$$

Consider $\|\bbD\|\leq M$ first. It follows from Lemma \ref{lem8},
(\ref{g9}) and (\ref{f15}) that (or see (\ref{h2}) in Appendix 1)
\begin{equation}\label{f16}
E|\zeta_{kj1}|^4\leq
\frac{M}{n^2v^4}E|\frac{1}{n}tr\bbA_{kj}^{-1}(z_1)\bbA_{kj}^{-1}(\bar
z_1)|^2\leq\frac{M}{n^2v^6},\ E|\zeta_{kj2}|^4\leq\frac{M}{n^2v^6\|\bbD\|^2}.
\end{equation}
Similarly, by (\ref{g9}) and (\ref{f15}), as in (\ref{g18}), we have
\begin{eqnarray}
&&E|n^{-1}tr\bbA_{kj}^{-1}(z_1)\bbD
\underline{\bbA}_{kj}^{-1}(z_2)\bbT|^4 \non &\leq&
\frac{M}{\|\bbD\|^4}\Big(E|n^{-1}tr\bbA_{kj}^{-1}(z_1)\bbA_{kj}^{-1}(\bar
z_1)|^4E|n^{-1}tr\underline{\bbA}_{kj}^{-1}(z_2)\underline{\bbA}_{kj}^{-1}(\bar
z_2)|^4\Big)^{1/2}\non
 &\leq&\frac{M}{\|\bbD\|^4v^4}.\label{g8}
\end{eqnarray}
In view of (\ref{f16}) and (\ref{f3*}),
$$
E|\frac{1}{n}\sum\limits_{j\neq
k}^{n}(E_j-E_{j-1})b_{12}(z_1)b_{12}(z_2)(\delta_{13})|^2\leq
\frac{M}{n^3v^6\|\bbD\|^2}.
$$
While (\ref{g8}) and (\ref{f3*}) yield
 $$
E|\frac{1}{n}\sum\limits_{j\neq
k}^{n}(E_j-E_{j-1})b_{12}(z_1)b_{12}(z_2)(\delta_{1j})|^2\leq
\frac{M}{n^2v^5\|\bbD\|^2},\ j=4,5.
$$
It follows from (\ref{f7}) that
$$
|\beta_{kj}(z_1)\alpha_{k1}\alpha_{k2}|\leq \frac{M}{v}\|\underline{\bbA}_{kj}^{-1}(z_2)\bbs_j\|^2=\frac{M}{v}\bbs_j^T\underline{\bbA}_{kj}^{-1}(\bar z_2)
\underline{\bbA}_{kj}^{-1}(z_2)\bbs_j.
$$
In the mean time we obtain from Lemma \ref{lem8}
\begin{equation}\label{g23}
E|\bbs_j^T\underline{\bbA}_{kj}^{-1}(\bar z_2)
\underline{\bbA}_{kj}^{-1}(z_2)\bbs_j-\frac{1}{n}tr\underline{\bbA}_{kj}^{-1}(\bar z_2)
\underline{\bbA}_{kj}^{-1}(z_2)\bbT|^8\leq\frac{M}{n^4v^{12}}.
\end{equation}
Thus we have
$$
E|\frac{1}{n}\sum\limits_{j\neq
k}^{n}(E_j-E_{j-1})b_{12}(z_1)b_{12}(z_2)(\delta_{16})|^2
\leq\frac{M}{n^2v^5}.
$$
Obviously, this estimate applies to the term involving
$\delta_{17}$. From (\ref{f7}) and Lemma \ref{lem1} we obtain
$$
E|\frac{1}{n}\sum\limits_{j\neq
k}^{n}(E_j-E_{j-1})b_{12}(z_1)b_{12}(z_2)(\delta_{18})|^2\leq\frac{M}{n^3v^6\|\bbD\|^2}.
$$
Summarizing the above we have
$$
E|\frac{1}{n}\sum\limits_{j\neq
k}^{n}(E_j-E_{j-1})b_{12}(z_1)b_{12}(z_2)(\delta_{1})|^2\leq\frac{M}{n^2v^5\|\bbD\|^2}.
$$

Consider $\frac{1}{n}tr\bbD\bbD^*\leq M$ next. By Lemma \ref{lem8}
\begin{equation}\label{g19}
E|\zeta_{kj2}|^8\leq\frac{M}{n^4}E(\frac{1}{n}tr\bbA_{kj}^{-1}(z_1)\bbD
\underline{\bbA}_{kj}^{-1}(z_2)\bbT\underline{\bbA}_{kj}^{-1}(\bar z_2)\bbD\bbA_{kj}^{-1}(\bar z_1)\bbT)^4\leq\frac{M}{n^4v^{16}}(\frac{1}{n}tr\bbD\bbD)^2.
\end{equation}
Observe that
\begin{equation}\label{g22}
E|n^{-1}tr\bbA_{kj}^{-1}(z_1)\bbD
\underline{\bbA}_{kj}^{-1}(z_2)\bbT|^8 \leq
\frac{M}{v^{12}}|n^{-1}tr\bbD\bbD^*|^4.
\end{equation}
This, together with (\ref{f16}) and (\ref{g8}), give
 $$
E|\frac{1}{n}\sum\limits_{j\neq
k}^{n}(E_j-E_{j-1})b_{12}(z_1)b_{12}(z_2)(\delta_{13})|^2\leq
\frac{M}{n^3v^7},
$$ and
$$
E|\frac{1}{n}\sum\limits_{j\neq
k}^{n}(E_j-E_{j-1})b_{12}(z_1)b_{12}(z_2)(\delta_{1j})|^2\leq
\frac{M}{n^2v^6},\ j=4,5.
$$
To deal with $\delta_{16}$, we obtain from (\ref{f7})
$$
E|\beta_{kj}(z_1)\xi_{kj}(z_1)\alpha_{k1}
\alpha_{k2}|^2\leq\frac{M}{v} E\Big(\sqrt{|\beta_{kj}(z_1)|}\xi_{kj}(z_1)\zeta_{kj2}\|\underline{\bbA}_{kj}^{-1}(z_2)\bbs_j\|\Big)^2
$$$$+\frac{M}{v}E\Big(\sqrt{|\beta_{kj}(z_1)|}\xi_{kj}(z_1)\|\underline{\bbA}_{kj}^{-1}(z_2)\bbs_j\|n^{-1}tr\bbA_{kj}^{-1}(z_1)\bbD
\underline{\bbA}_{kj}^{-1}(z_2)\bbT\Big)^2.
$$
We conclude from (\ref{g19}), (\ref{g9}), (\ref{g23}) and Lemma \ref{lem1}
$$E\Big(\sqrt{|\beta_{kj}(z_1)|}\xi_{kj}(z_1)\zeta_{kj2}\|\underline{\bbA}_{kj}^{-1}(z_2)\bbs_j\|\Big)^2
$$$$\leq (E|\xi_{kj}|^8E|\zeta_{kj2}|^8)^{1/4}(E|\beta_{kj}|^4E|\bbs_j^T\underline{\bbA}_{kj}^{-1}(\bar z_2)
\underline{\bbA}_{kj}^{-1}(z_2)\bbs_j|^4)^{1/4}\leq\frac{M}{n^2v^6}.$$
Replacing $\zeta_{kj2}$ with $n^{-1}tr\bbA_{kj}^{-1}(z_1)\bbD
\underline{\bbA}_{kj}^{-1}(z_2)\bbT$ in the above similarly gives
$$
E\Big(\sqrt{|\beta_{kj}(z_1)|}\xi_{kj}(z_1)\|\underline{\bbA}_{kj}^{-1}(z_2)\bbs_j\|n^{-1}tr\bbA_{kj}^{-1}(z_1)\bbD
\underline{\bbA}_{kj}^{-1}(z_2)\bbT\Big)^2\leq\frac{M}{nv^5}.
$$
Consequently
$
E|\frac{1}{n}\sum\limits_{j\neq
k}^{n}(E_j-E_{j-1})b_{12}(z_1)b_{12}(z_2)(\delta_{16})|^2\leq
\frac{M}{n^2v^6}.
$
This estimate apparently applies to the term involving $\delta_{17}$. Also, for $\delta_{18}$ we similarly have
$$
E|\delta_{18}|^2\leq \frac{M}{v^3} E\Big(\sqrt{|\underline{\beta_{kj}}(z_2)|}\xi_{kj}(z_1)\xi_{kj}(z_1)\|\underline{\bbA}_{kj}^{-1}(z_2)\bbs_j\|\Big)^2
\leq\frac{M}{n^2v^6}.
$$

The terms $\delta_2$ and $\delta_3$ can be similarly, even simpler,
proved to have the same order. Thus Lemma \ref{lem7} is complete.

\end{proof}

Combining (\ref{f63}), (\ref{f48}), (\ref{g5}), (\ref{g6}) and Lemma
\ref{lem7} with $\bbD=I$ we conclude that
\begin{eqnarray}
&&-\frac{1}{4\pi^2}\sum\limits_{k=1}^nE_{k-1}[Y_k(x_1)Y_k(x_2)] \non
&=&-\frac{1}{2h^2\pi^2}\int\int K'(\frac{x_1-z_1}{h})
K'(\frac{x_2-z_2}{h})a_{n1}(z_1,z_2)dz_1dz_2+o_p(1),\label{f48*}
\end{eqnarray}
where
$$
a_{n1}(z_1,z_2)=\frac{b_1(z_1)b_1(z_2)}{n^2}\sum\limits_{k=1}^nE\Big[tr\bbT
\bbA_k^{-1}(z_1)\bbT E_k(\bbA_k^{-1}(z_2))\Big].
$$
Thus, it is enough to investigate the uniform convergence of
$a_{n1}(z_1,z_2)$.

\subsection{The limit of $a_{n1}(z_1,z_2)$ }

Before developing the limit of $a_{n1}(z_1,z_2)$, we first establish
Lemma \ref{lem6} below, which improves (\ref{a39}) in Lemma
\ref{lem1}.
\begin{lemma}\label{lem6}
\begin{equation}\label{g26}
\frac{1}{n^2}E|tr\bbA_{k}^{-1}(z)\bbF^{-1}(z)\bbD-Etr\bbA_{k}^{-1}(z)\bbF^{-1}(z)\bbD|^2
\leq\frac{M}{n^2v^3\|\bbD\|^2},
\end{equation}
where $\bbF^{-1}(z)=(E\underline{m}_n\bbT+\bbI)^{-1}$ and $\bbD=\bbT$ or $(z\bbI-\frac{n-1}{n}b_{12}(z)\bbT)^{-1}$.
\end{lemma}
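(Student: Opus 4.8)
The matrices $\bbF^{-1}(z)$ and $\bbD$ are non-random — both being functions of $\bbT$, hence commuting with $\bbT$ and with each other — so it suffices to control $\frac1{n^2}E\big|tr\bbA_k^{-1}(z)\bbM-E\,tr\bbA_k^{-1}(z)\bbM\big|^2$ for the deterministic matrix $\bbM=\bbF^{-1}(z)\bbD$. The plan is to run the martingale argument already used in Lemma \ref{lem7}, which is lighter here: only one resolvent occurs and no conditional expectation $E_k(\cdot)$, so a single ``$\delta$''--type term appears instead of three. Decomposing over the columns $\bbs_j$, $j\ne k$, with respect to $(E_j)$ and using that $E_j-E_{j-1}$ kills $\bbs_j$--free quantities together with (\ref{f6}),
$$tr\bbA_k^{-1}(z)\bbM-E\,tr\bbA_k^{-1}(z)\bbM=-\sum_{j\ne k}(E_j-E_{j-1})\big[\beta_{kj}(z)\,\bbs_j^T\bbA_{kj}^{-1}(z)\bbM\bbA_{kj}^{-1}(z)\bbs_j\big],$$
so, by orthogonality of martingale differences, the left side has second moment $\sum_{j\ne k}E|\gamma_j|^2$ with $\gamma_j$ the $j$--th summand.

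I would then split $\beta_{kj}(z)=b_{12}(z)-b_{12}^2(z)\xi_{kj}(z)+\beta_{kj}(z)b_{12}^2(z)\xi_{kj}^2(z)$ via (\ref{f13}). For the leading piece (coefficient $b_{12}(z)$, bounded by (\ref{f3*})) replace $\bbs_j^T\bbN\bbs_j$ by $\bbs_j^T\bbN\bbs_j-n^{-1}tr\bbN\bbT$, $\bbN=\bbA_{kj}^{-1}(z)\bbM\bbA_{kj}^{-1}(z)$ (the trace term, being $\bbs_j$--free, is killed by $E_j-E_{j-1}$), and apply Lemma \ref{lem8} to bound its contribution by $Mn^{-2}E\,tr(\bbN\bbT\bbN^*\bbT)$. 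The crux is the estimate of $E\,tr(\bbN\bbT\bbN^*\bbT)=E\|\bbT^{1/2}\bbA_{kj}^{-1}(z)\bbM\bbA_{kj}^{-1}(z)\bbT^{1/2}\|_{\HS}^2$: I would isolate, via sub-multiplicativity of the Hilbert--Schmidt norm, one factor $\|\bbA_{kj}^{-1}(z)\bbT^{1/2}\|_{\HS}^2=tr\,\bbT\bbA_{kj}^{-1}(z)\bbA_{kj}^{-1}(\bar z)\le Mn/v$, where this last bound comes from (\ref{f15}) and saves a power of $v$ over the trivial $\|\bbA_{kj}^{-1}\|^2\le v^{-2}$, and then control the remaining $\bbM$--dependent factor, a function of $\bbT$, not by $\|\bbF^{-1}(z)\|$ (which need not be bounded uniformly on $\mathcal{C}_1$) but by the averaged integrability bounds (\ref{f11})--(\ref{g38}), after invoking the resolvent identity $\bbF^{-1}(z)\bbT=(E\underline{m}_n(z))^{-1}(\bbI-\bbF^{-1}(z))$ and, for $\bbD=(z\bbI-\tfrac{n-1}{n}b_{12}(z)\bbT)^{-1}$, the compatibility of $z\bbI-\tfrac{n-1}{n}b_{12}(z)\bbT$ with $\bbI+E\underline{m}_n(z)\bbT$; this is what produces the factor $\|\bbD\|^{-2}$. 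The remainder pieces, carrying $\xi_{kj}(z)$ and $\xi_{kj}^2(z)$, are handled exactly as $\delta_{12}$ and $\delta_{16}$--$\delta_{18}$ in the proof of Lemma \ref{lem7}: H\"older's inequality against $E|\xi_{kj}(z)|^8\le Mn^{-4}v^{-4}$ from (\ref{g9}) and the moment bounds on $\beta_{kj}(z)$ from Lemma \ref{lem1}, the surviving resolvent factors being estimated through $\|\bbA_{kj}^{-1}\|\le v^{-1}$, $\frac1n tr\bbA_{kj}^{-1}(z)\bbA_{kj}^{-1}(\bar z)\le Mv^{-1}$ and the a.s.\ bound $|\beta_{kj}(z)\bbs_j^T\bbA_{kj}^{-1}(z)\bbA_{kj}^{-1}(\bar z)\bbs_j|\le v^{-1}$ of (\ref{f7}); they come out of the same order. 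Summing $\le n$ terms and dividing by $n^2$ gives (\ref{g26}), which sharpens the corresponding bound in Lemma \ref{lem1} precisely in the powers of $v$ and $\|\bbD\|$.

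The step I expect to be the genuine obstacle is that trace estimate — obtaining the sharp power of $v$ and, above all, the factor $\|\bbD\|^{-2}$. Since $\bbF^{-1}(z)$ carries no uniform operator-norm bound on the contour, only the \emph{averaged} conditions (\ref{f11}), (\ref{g38}), the gains in $v$ and in $\|\bbD\|$ must be extracted from the algebraic structure of $\bbM=\bbF^{-1}(z)\bbD$ — its commutation with $\bbT$, the identity for $\bbF^{-1}(z)\bbT$, and the compatibility of the two admissible $\bbD$'s with $\bbI+E\underline{m}_n(z)\bbT$ — together with the integrability of $(1+tE\underline{m}_n(z))^{-1}$ against $dH_n(t)$, and one must keep careful track of which resolvent factor a single application of (\ref{f15}) may be spent on.
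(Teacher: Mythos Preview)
Your martingale decomposition and the splitting $\beta_{kj}=b_{12}-b_{12}^2\xi_{kj}+\beta_{kj}b_{12}^2\xi_{kj}^2$ match the paper exactly; the resulting three pieces are the paper's $\phi_1,\phi_2,\phi_3$. You also correctly locate the crux: bounding
\[
E\,tr\bigl(\bbA_{kj}^{-1}(z)\bbF^{-1}(z)\bbD\,\bbA_{kj}^{-1}(z)\bbT\,\bbA_{kj}^{-1}(\bar z)\bbD^*\bbF^{-1}(\bar z)\bbA_{kj}^{-1}(\bar z)\bigr).
\]

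Where your plan breaks down is precisely the step you flag as the obstacle. If you peel off one factor $\|\bbA_{kj}^{-1}(z)\bbT^{1/2}\|_{\HS}$ and spend (\ref{f15}) on it, what remains must be controlled in \emph{operator norm}, namely $\|\bbT^{1/2}\bbA_{kj}^{-1}(z)\bbF^{-1}(z)\bbD\|$. This is not ``a function of $\bbT$'': the random $\bbA_{kj}^{-1}(z)$ sits between $\bbT^{1/2}$ and $\bbF^{-1}(z)\bbD$ and does not commute with either. You cannot reduce an operator norm involving $\bbA_{kj}^{-1}$ and $\bbF^{-1}$ to the $L^4$-type averages in (\ref{f11})--(\ref{g38}); those control $\int|1+tE\underline m_n|^{-4}dH_n(t)$, not $\sup_t|1+tE\underline m_n|^{-1}$. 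Separating $\bbA_{kj}^{-1}$ from $\bbF^{-1}$ by a further operator-norm bound forces you back to $\|\bbF^{-1}(z)\|$, which you rightly note is not uniformly bounded on $\mathcal C_1$.

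The paper resolves this by proving, \emph{before} the martingale argument, the trace estimate
\[
E\,\frac1n\,tr\,\bbF^{-1}(\bar z)\bbA_{k}^{-1}(\bar z)\bbA_{k}^{-1}(z)\bbF^{-1}(z)\ \le\ \frac{M}{v},
\]
which is (\ref{g24}). This is obtained from $\frac1v\Im\bigl(E\frac1n tr\,\bbF^{-1}(\bar z)\bbA_k^{-1}(z)\bbF^{-1}(z)\bigr)$ and the bound (\ref{g10}); and (\ref{g10}) in turn is established by expanding the random resolvent via
\[
\bbA_k^{-1}(z)=-\bbF_1^{-1}(z)+b_{12}(z)B(z)+C(z)+D(z)
\]
(equation (\ref{f21})), so that the leading contribution is the purely deterministic trace $\frac1n tr\,\bbF_1^{-1}\bbH\bbT\bbF_1^{-1}\bbT$, which \emph{is} a functional of $\bbT$ alone and can be bounded by (\ref{f1}). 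With (\ref{g24}) in hand, the paper peels off $\|\bbD\bbA_{kj}^{-1}(z)\bbT\bbA_{kj}^{-1}(\bar z)\bbD^*\|\le M\|\bbD\|^2v^{-2}$ from the middle of the trace and applies (\ref{g24}) to what remains; that is what yields the stated power of $v$ and the $\|\bbD\|^2$ factor. Your proposal is missing this preliminary approximation of $\bbA_k^{-1}$ by its deterministic counterpart, and without it the averaged hypotheses (\ref{f11})--(\ref{g38}) cannot be brought to bear.
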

\begin{proof}
With notation
$\bbF_1^{-1}(z)=(z\bbI-\frac{n-1}{n}b_{12}(z)\bbT)^{-1}$, we start
the proof of Lemma \ref{lem6} by presenting the equality (2.9) in
\cite{b2}
\begin{equation}\label{f21}
\bbA_k^{-1}(z)=-\bbF_1^{-1}(z)+b_{12}(z)B(z)+C(z)+D(z),
\end{equation}
where
$$
B(z)=\sum\limits_{j\neq
k}\bbF_1^{-1}(z)(\bbs_j\bbs_j^T-n^{-1}\bbT)\bbA^{-1}_{kj}(z),
$$
$$
C(z)=\sum\limits_{j\neq
k}(\beta_{kj}(z)-b_{12}(z))\bbF_1^{-1}(z)\bbs_j\bbs_j^T\bbA^{-1}_{kj}(z)
$$
and
$$
D(z)=n^{-1}b_{12}(z)\bbF_1^{-1}(z)\bbT\sum\limits_{j\neq
k}(\bbA^{-1}_{kj}(z)-\bbA^{-1}_{k}(z)).
$$
By (\ref{f11}) we have
\begin{equation}\label{f1}
\frac{1}{n}tr\bbF_1^{-2}(z)\bbF_1^{-2}(\bar z)\leq M,\quad
\frac{1}{n}tr\bbF^{-2}(z)\bbF^{-2}(\bar z)\leq M.
\end{equation}
However sometimes we also use the fact that (see (2.10) in \cite{b2}
and Lemma 2.11 of \cite{b4})
\begin{equation}\label{g11}
\|\bbF_1(z)\|\leq\frac{M}{v}, \quad \|\bbF(z)\|\leq\frac{M}{v}.
\end{equation}

With $\bbH=\bbF^{-1}(z)$ or $\bbH=\bbI$, we first apply (\ref{f21})
with $z$ replaced by $z_2$ to prove that
\begin{equation}\label{f22}
\frac{1}{n}E\Big[tr\bbA_{k}^{-1}(z_2)\bbH\bbT\bbF_1^{-1}(z_1)\bbT\Big]=-\frac{1}{n}tr\bbF_1^{-1}(z_2)\bbH\bbT\bbF_1^{-1}(z_1)\bbT+O(\frac{1}{nv^{5/2}}),
\end{equation}
which, together with (\ref{f1}), then implies that
\begin{equation}
\label{g10}\Big|\frac{1}{n}E\Big[tr\bbA_{k}^{-1}(z)\bbH\bbT\bbF_1^{-1}(z)\bbT\Big]\Big|\leq
M.
\end{equation}

To see (\ref{f22}), first note that
$$
\frac{1}{n}E\Big[trB(z_2)\bbH\bbT\bbF_1^{-1}(z_1)\bbT\Big]=0.
$$
Second, applying (\ref{f13}) yields
$$
\frac{1}{n}E\Big[trC(z_2)\bbH\bbT\bbF_1^{-1}(z_1)\bbT\Big]=-\frac{1}{n}\sum\limits_{j\neq
k}E\Big[b_{12}^2(z_1)\eta_{kj}(z_1)\eta_{kj1}\Big]
$$
$$
-\frac{1}{n}\sum\limits_{j\neq
k}E\Big[b_{12}^2(z_1)\frac{1}{n}(tr\bbA^{-1}_{kj}(z)\bbT-Etr\bbA^{-1}_{kj}(z)\bbT)\frac{1}{n}tr\bbA^{-1}_{kj}(z)\bbH\bbT\bbF_1^{-1}(z_1)\bbT\bbF_1^{-1}(z)\bbT\Big]
$$
$$+\frac{1}{n}\sum\limits_{j\neq
k}E\Big[\beta_{kj}(z)b^2_{12}(z)\xi^2_{kj}\bbs_j^T\bbA^{-1}_{kj}(z)\bbH\bbT\bbF_1^{-1}(z_1)\bbT\bbF_1^{-1}(z)\bbs_j\Big],
$$
where
$$
\eta_{kj1}=\bbs_j^T\bbA^{-1}_{kj}(z)\bbH\bbT\bbF_1^{-1}(z)\bbT\bbF_1^{-1}(z)\bbs_j-\frac{1}{n}tr\bbA^{-1}_{kj}(z)\bbH\bbT\bbF_1^{-1}(z)\bbT\bbF_1^{-1}(z)\bbT.
$$
By Lemma \ref{lem8}, (\ref{f1}) and (\ref{g11})
\begin{equation}\label{g12}
E|\eta_{kj1}|^4\leq \frac{M}{n^2v^8}
(\frac{1}{n}tr\bbF_1^{-2}(z)\bbF_1^{-2}(\bar z))^2\leq
\frac{M}{n^2v^8},
\end{equation}
and via Holder's inequality and (\ref{f1})
\begin{equation}\label{g14}
\Big|\frac{1}{n}tr\bbA^{-1}_{kj}(z)\bbH\bbT\bbF_1^{-1}(z)\bbT\bbF_1^{-1}(z)\bbT\Big|\leq\frac{M}{v}.
\end{equation}
Appealing to (\ref{g12}) and (\ref{g9}) yields
$$
\Big|E\Big[b_{12}(z_1)\xi_{kj}(z_1)\eta_{kj1}\Big]\Big|\leq\frac{M}{nv^{5/2}}.
$$
We obtain from (\ref{g14}) and (\ref{g9})
$$
\Big|
E\Big[\frac{1}{n}(tr\bbA^{-1}_{kj}(z)\bbT-Etr\bbA^{-1}_{kj}(z)\bbT)\frac{1}{n}tr\bbA^{-1}_{kj}(z)\bbH\bbT\bbF_1^{-1}(z_1)\bbT\bbF_1^{-1}(z)\bbT\Big]\Big|
\leq\frac{M}{nv^{5/2}}.
$$
In view of (\ref{g12}), (\ref{g9}), (\ref{g14}) and Lemma \ref{lem1}
we have
$$
\Big|E\Big[\beta_{kj}(z_1)\xi^2_{kj}\bbs_j^T\bbA^{-1}_{kj}(z)\bbH\bbT\bbF_1^{-1}(z_1)\bbT\bbF_1^{-1}(z_1)\bbs_j\Big]\Big|
$$
$$
\leq
(E|\xi_{kj}(z_1)|^4)^{1/2}(E|\beta_{kj}(z)|^4E|\bbs_j^T\bbA^{-1}_{kj}(z)\bbH\bbT\bbF_1^{-1}(z_1)\bbT\bbF_1^{-1}(z)\bbs_j|^4)^{1/4}\leq\frac{M}{nv^{5/2}}.
$$

Let $\eta_{kj2}$ equal to
$$\bbs_j^T\bbA^{-1}_{kj}(z)\bbH\bbT\bbF_1^{-1}(z)\bbT\bbF_1^{-1}(z)\bbA^{-1}_{kj}(z)\bbs_j-\frac{1}{n}tr\bbA^{-1}_{kj}(z)\bbH\bbT\bbF_1^{-1}(z)\bbT\bbF_1^{-1}(z)\bbA^{-1}_{kj}(z)\bbT.
$$
Then, as in (\ref{g12}),
\begin{equation}
E|\eta_{kj2}|^4\leq \frac{M}{n^2v^{12}},\
\Big|\frac{1}{n}tr\bbA^{-1}_{kj}(z)\bbH\bbT\bbF_1^{-1}(z)\bbT\bbF_1^{-1}(z)\bbA^{-1}_{kj}(z)\bbT\Big|\leq\frac{M}{v^2}.
\end{equation}
This, together with (\ref{f13}), (\ref{g9}) and Lemma \ref{lem1},
ensures that
$$
|\frac{1}{n}E\Big[trD(z_2)\bbH\bbT\bbF_1^{-1}(z_1)\bbT\Big]|\leq\frac{M}{n^{3/2}v^{5/2}}.
$$
Thus (\ref{f22}) is true.

When replacing $\bbF_1^{-1}(z),\bbT$ with $\bbF^{-1}(\bar z),\bbI$,
respectively, (\ref{g10}) further ensures that
\begin{equation}
\label{g24} E\frac{1}{n}tr\bbF^{-1}(\bar z)\bbA_{k}^{-1}(\bar
z)\bbA_{k}^{-1}(z)\bbF^{-1}(z)=\frac{1}{v}\Im
\Big(E\frac{1}{n}tr\bbF^{-1}(\bar
z)\bbA_{k}^{-1}(z)\bbF^{-1}(z)\Big)\leq\frac{M}{v}.
\end{equation}

As before, we get a martingale representation as follows:
$$
\frac{1}{n}tr\bbA_{k}^{-1}(z)\bbF^{-1}(z)\bbD-Etr\bbA_{k}^{-1}(z)\bbF^{-1}(z)\bbD
$$$$=\frac{1}{n}\sum\limits_{j\neq k}(E_j-E_{j-1})\Big(tr\bbA_{k}^{-1}(z)\bbF^{-1}(z)\bbD-tr\bbA_{kj}^{-1}(z)\bbF^{-1}(z)\bbD\Big)
$$
$$
=\frac{1}{n}\sum\limits_{j\neq
k}(E_j-E_{j-1})\bbs_j^T\bbA_{kj}^{-1}(z)\bbF^{-1}(z)\bbD\bbA_{kj}^{-1}(z)\bbs_j\beta_{kj}
$$
$$
=\frac{b_{12}(z)}{n}\sum\limits_{j\neq
k}(E_j-E_{j-1})\Big(\phi_1+\phi_2+\phi_3\Big),
$$
where
$$
\phi_1=\bbs_j^T\bbA_{kj}^{-1}(z)\bbF^{-1}(z)\bbD\bbA_{kj}^{-1}(z)\bbs_j-\frac{1}{n}tr\bbA_{kj}^{-1}(z)\bbF^{-1}(z)\bbD\bbA_{kj}^{-1}(z)\bbT,
$$
$$
\phi_2=-b_{12}(z)\bbs_j^T\bbA_{kj}^{-1}(z)\bbF^{-1}(z)\bbD\bbA_{kj}^{-1}(z)\bbs_j\xi_{kj}(z),
$$
and
$$
\phi_3=b_{12}(z)\beta_{kj}\bbs_j^T\bbA_{kj}^{-1}(z)\bbF^{-1}(z)\bbD\bbA_{kj}^{-1}(z)\bbs_j\xi^2_{kj}(z).
$$
 Here in the last step one uses (\ref{f13}). By Lemma \ref{lem8}
and (\ref{g24}) we have
\begin{eqnarray}\label{g32}
E|\phi_1|^2&\leq&
\frac{M}{n^2}Etr\bbA_{kj}^{-1}(z)\bbF^{-1}(z)\bbD\bbA_{kj}^{-1}(z)\bbT\bbA_{kj}^{-1}(\bar
z)\bbD\bbF^{-1}(\bar z)\bbA_{kj}^{-1}(\bar z)\non
           &\leq&\frac{M}{n^2v^2\|\bbD\|^2}Etr\bbF^{-1}(\bar z)\bbA_{kj}^{-1}(\bar
           z)\bbA_{kj}^{-1}(z)\bbF^{-1}(z)\leq\frac{M}{nv^3\|\bbD\|^2}.
\end{eqnarray}
Similarly, from (\ref{f1}) and (\ref{f15}) it is easy to verify that
\begin{equation}\label{g34}
E|\phi_1|^8\leq \frac{M}{n^4v^{14}\|\bbD\|^8}.
\end{equation}
As in (\ref{g1}), by Lemma \ref{lem1}, (\ref{g9}) and (\ref{g24}) we
have
$$
E|\frac{1}{n}tr\bbA_{kj}^{-1}(z)\bbF^{-1}(z)\bbD\bbA_{kj}^{-1}(z)\bbT\eta_{kj}(z)|^2
$$
$$
\leq
\frac{M}{n}E\Big(|\frac{1}{n}tr\bbA_{kj}^{-1}(z)\bbF^{-1}(z)\bbD\bbA_{kj}^{-1}(z)\bbT|^2
\frac{1}{n}tr\bbA_{kj}^{-1}(z)\bbA_{kj}^{-1}(\bar z)\Big)
$$$$
\leq
\frac{M}{n\|\bbD\|^2}E\Big[\frac{1}{n}tr\bbA_{kj}^{-1}(z)\bbF^{-1}(z)\bbF^{-1}(\bar
z)\bbA_{kj}^{-1}(\bar z)
\Big(\frac{1}{n}tr\bbA_{kj}^{-1}(z)\bbA_{kj}^{-1}(\bar
z)\Big)^2\Big]
$$$$
\leq\frac{M}{n^3v^2\|\bbD\|^2}E\Big[tr\bbA_{kj}^{-1}(z)\bbF^{-1}(z)\bbF^{-1}(\bar
z)\bbA_{kj}^{-1}(\bar z)
$$
\begin{equation}\label{g35}
\qquad\qquad\times\Big(\Big|tr\bbA_{kj}^{-1}(z)-Etr\bbA_{kj}^{-1}(z)\Big|^2+|Etr\bbA_{kj}^{-1}(z)|^2\Big)\Big]
\leq\frac{M}{nv^3\|\bbD\|^2}.
\end{equation}
Note that
$$
(E_j-E_{j-1})\Big[\frac{1}{n}tr\bbA_{kj}^{-1}(z)\bbF^{-1}(z)\bbD\bbA_{kj}^{-1}(z)\bbT(\frac{1}{n}tr\bbA_{kj}^{-1}(z)\bbT-\frac{1}{n}E\bbA_{kj}^{-1}(z)\bbT)\Big]=0.
$$
This, together with (\ref{g34}), (\ref{g35}) and (\ref{g9}), implies
that
$$
E|\frac{1}{n}\sum\limits_{j\neq k}(E_j-E_{j-1})\phi_2|^2\leq
\frac{M}{n^2v^3\|\bbD\|^2}.
$$
By (\ref{g34}) and Lemma \ref{lem1} we
obtain
\begin{equation}\label{h3}
(E|\xi_{kj}(z)|^8)^{1/2}(E|\phi_1|^8E|\beta_{kj}|^8)^{1/4}\leq\frac{M}{n^3v^{13/2}}(E|\beta_{kj}|^4)^{1/4}\leq\frac{M}{n^3v^{13/2}}.
\end{equation}
From Holder's inequality, (\ref{f15}) and (\ref{f1})
$$
E|\frac{1}{n}tr\bbA_{kj}^{-1}(z)\bbF^{-1}(z)\bbD\bbA_{kj}^{-1}(z)\bbT|^8\leq E|\frac{1}{n}tr\bbA_{kj}^{-2}(z)\bbA_{kj}^{-2}(\bar z)\frac{1}{n}
tr\bbF^{-1}(z)\bbF^{-1}(\bar z)|^4\leq\frac{M}{v^{12}}.
$$
This, together with Lemma \ref{lem1} and (\ref{g9}), implies that
$$(E|\frac{1}{n}(tr\bbA_{kj}^{-1}(z)-Etr\bbA_{kj}^{-1}(z))|^8)^{1/2}(E|\frac{1}{n}tr\bbA_{kj}^{-1}(z)\bbF^{-1}(z)\bbD\bbA_{kj}^{-1}(z)\bbT|^8E|\beta_{kj}|^8)^{1/4}
$$$$\leq\frac{M}{n^4v^{10}}
$$
and that
$$
(E|\beta_{kj}|^4)^{1/2}(E\Big|\frac{1}{n}tr\bbA_{kj}^{-1}(z)\bbF^{-1}(z)\bbD\bbA_{kj}^{-1}(z)\bbT|^4|
\eta_{kj}(z)|^8\Big|)^{1/2}
$$$$\leq\frac{M}{n^2v^2}(E\Big|\frac{1}{n}tr\bbA_{kj}^{-1}(z)\bbF^{-1}(z)\bbD\bbA_{kj}^{-1}(z)\bbT
\frac{1}{n}tr\bbA_{kj}^{-1}(z)\Big|^4)^{1/2}\leq\frac{M}{n^2v^5}.
$$
These and (\ref{h3}) ensure that
$$
E|\phi_3|^2\leq\frac{M}{n^2v^5}.
$$
Thus, Lemma \ref{lem6} is complete.

\end{proof}

 To later use, we now consider a more general form than
$a_{n1}(z_1,z_2)$
 \begin{equation}
a_{n2}^{(1)}(z_1,z_2)=\frac{1}{n^2}\sum\limits_{k=1}^nE\Big[tr\bbT
\bbA_k^{-1}(z_1)\bbT E_k(\bbA_k^{-1}(z_2))\bbH\Big].\label{f28}
\end{equation}
One should note that $a_{n2}^{(1)}(z_1,z_2)$ reduces to
$a_{n2}(z_1,z_2)$ when $\bbH=\bbI$.

Applying the definition of $C(z_1)$ and (\ref{f13})
gives
\begin{equation}\label{h5}
\frac{1}{n}E\Big[tr\bbT C(z_1)\bbT
E_k(\bbA_k^{-1}(z_2))\bbH\Big]=C_1(z_1)+C_2(z_1),\end{equation}
 where
$$
C_1(z_1)=-b_{12}^2(z_1)\frac{1}{n}\sum\limits_{j\neq
k}E\Big[\xi_{kj}(z_1)\bbs_j^T\hat{\bbA}^{-1}_{kjk}(z_1,z_2)\bbs_j\Big]
$$
and
$$
C_2(z_1)=b_{12}^2(z_1)\frac{1}{n}\sum\limits_{j\neq
k}E\Big[\beta_{kj}(z_1)\xi^2_{kj}(z)\bbs_j^T\hat{\bbA}^{-1}_{kjk}(z_1,z_2)\bbs_j\Big].
$$
Here
\begin{equation}\label{h4}
 \hat{\bbA}^{-1}_{kjk}(z_1,z_2)=\bbA^{-1}_{kj}(z_1)\bbT
E_k(\bbA_{k}^{-1}(z_2))\bbH\bbT\bbF_1^{-1}(z_1).
\end{equation}
 Define
 $$
\zeta_{kj3}=\bbs_j^T\hat{\bbA}^{-1}_{kjk}(z_1,z_2)\bbs_j-\frac{1}{n}tr\hat{\bbA}^{-1}_{kjk}(z_1,z_2)\bbT.
 $$
Consider $j>k$ first. Then by Lemma \ref{lem8} and (\ref{f1})
 \begin{equation}\label{g16}
E|\zeta_{kj3}|^4\leq\frac{M}{n^2v^8},
 \end{equation}
 and via an argument similar to (\ref{g8}), (\ref{f15}) and Holder's inequality
 \begin{equation}\label{g17}
 E|\frac{1}{n}tr\hat{\bbA}^{-1}_{kjk}(z_1,z_2)\bbT|^4\leq\frac{M}{v^6}.
 \end{equation}
It follows from (\ref{g16}), (\ref{g17}) and (\ref{g9}) that
$$
|C_2(z_1)|\leq \frac{M}{n}\sum\limits_{j\neq
k}(E|\xi_{kj}(z_1)|^4)^{1/2}\Big[E|\beta_{kj}(z_1)|^4
$$$$\qquad\qquad\times\Big(E|\zeta_{kj3}|^4+E|\frac{1}{n}tr\hat{\bbA}^{-1}_{kjk}(z_1,z_2)\bbT|^4\Big)\Big]^{1/4}\leq\frac{M}{nv^{5/2}}.
$$
As for $C_1(z_1)$, by (\ref{g16}), (\ref{g9}), (\ref{f10}) and
(\ref{f5}) with $\bbD=\bbH\bbT\bbF_1^{-1}(z_1)\bbT$ we have
$$\frac{1}{n}\sum\limits_{j>
k}E\Big[\xi_{kj}(z_1)\bbs_j^T\hat{\bbA}^{-1}_{kjk}(z_1,z_2)\bbs_j\Big]
=\frac{1}{n}\sum\limits_{j> k}E\Big[\eta_{kj}(z_1)\zeta_{kj3}
$$$$+\frac{1}{n^2}(tr\bbA_{kj}^{-1}-Etr\bbA_{kj}^{-1})(tr\hat{\bbA}^{-1}_{kjk}(z_1,z_2)\bbT-
Etr\hat{\bbA}^{-1}_{kjk}(z_1,z_2)\bbT)\Big]=O(\frac{1}{nv^{5/2}}).
$$

When $j<k$, decompose $\bbA_{k}^{-1}(z_2)$ as
$$\bbA_{kj}^{-1}(z_2)-\bbA_{kj}^{-1}(z_2)\bbs_j\bbs_j^T\bbA_{k}^{-1}(z_2)\beta_{kj}.$$
Then, apparently, the above argument for the case $j>k$ also works if we replace
$E_k(\bbA_k^{-1}(z_2))$ in $\hat{\bbA}^{-1}_{kjk}(z_1,z_2)$ with $E_k(\bbA_{kj}^{-1}(z_2))$.  For another term of $C_2(z_1)$ by
(\ref{f7}), (\ref{g9}) and Lemma \ref{lem1}
$$
E\Big|\beta_{kj}(z_1)\underline{\beta}_{kj}(z_2)\xi^2_{kj}(z)\bbs_j^T\bbA_{kj}^{-1}(z_1)\bbT\underline{\bbA}_{kj}^{-1}(z_2)\bbs_j\bbs_j^T
\underline{\bbA}_{kj}^{-1}(z_2)\bbH\bbT\bbF_1^{-1}(z_1)\bbs_j\Big|
$$
$$
\leq\frac{M}{v}(E|\beta_{kj}(z_1)|^2E|\underline{\beta}_{kj}(z_2)|^2E|\xi_{kj}|^8E|\bbs_j^T\underline{\bbA}_{kj}^{-1}(z_2)\bbH\bbT\bbF_1^{-1}(z_1)\bbs_j|^4)^{1/4}
\leq\frac{M}{nv^{5/2}},$$ because \begin{equation}
\label{g25}E|\zeta_{kj4}|^4\leq\frac{M}{n^2v^4},\quad
E|\frac{1}{n}tr\underline{\bbA}_{kj}^{-1}(z_2)\bbH\bbT\bbF_1^{-1}(z_1)\bbT|^4\leq\frac{M}{v^2},
\end{equation}
with
$\zeta_{kj4}=\bbs_j^T\underline{\bbA}_{kj}^{-1}(z_2)\bbH\bbT\bbF_1^{-1}(z_1)\bbs_j-n^{-1}tr\underline{\bbA}_{kj}^{-1}(z_2)\bbH\bbT\bbF_1^{-1}(z_1)\bbT$.
As for another term of $C_1(z_1)$, an application of (\ref{f13})
yields
$$
\frac{1}{n}\sum\limits_{j<
k}E\Big[\xi_{kj}(z_1)\bbs_j^T\bbA^{-1}_{kj}(z_1)\bbT
E_k\Big(\bbA_{kj}^{-1}(z_2)\bbs_j\bbs_j^T\bbA_{kj}^{-1}(z_2)\beta_{kj}(z_2)\Big)\bbH\bbT\bbF_1^{-1}(z_1)\bbs_j\Big]
$$
\begin{eqnarray}
&=&\frac{1}{n}\sum\limits_{j<
k}E\Big[\underline{\beta}_{kj}(z_2)\xi_{kj}(z_1)\bbs_j^T\bbA^{-1}_{kj}(z_1)\bbT
\underline{\bbA}_{kj}^{-1}(z_2)\bbs_j\bbs_j^T\underline{\bbA}_{kj}^{-1}(z_2)\bbH\bbT\bbF_1^{-1}(z_1)\bbs_j\Big]\non
&\ &\quad =\frac{b_{12}}{n}\sum\limits_{j<
k}[C_{11}+C_{12}+C_{13}+C_{14}+C_{15}+C_{16}],\label{g29}
\end{eqnarray}
where
$$
C_{11}=E\Big[\xi_{kj}(z_1)\zeta_{kj1}\zeta_{kj4}\Big],\
C_{12}=E\Big[\xi_{kj}(z_1)\zeta_{kj1}n^{-1}tr\underline{\bbA}_{kj}^{-1}(z_2)\bbH\bbT\bbF_1^{-1}(z_1)\bbT\Big]
$$
\begin{eqnarray*}
C_{13}&=&E\Big[\xi_{kj}(z_1)\zeta_{kj4}n^{-1}tr\bbA^{-1}_{kj}(z_1)\bbT
\underline{\bbA}_{kj}^{-1}(z_2)\bbT\Big],\\
C_{14}&=&E\Big[\xi_{kj}(z_1)n^{-1}tr\bbA^{-1}_{kj}(z_1)\bbT
\underline{\bbA}_{kj}^{-1}(z_2)\bbT
n^{-1}tr\underline{\bbA}_{kj}^{-1}(z_2)\bbH\bbT\bbF_1^{-1}(z_1)\bbT\Big]\\
&=&\frac{1}{n}E\Big[(tr\bbA_{kj}^{-1}(z_1)\\
&&-Etr\bbA_{kj}^{-1}(z_1))n^{-1}tr\bbA^{-1}_{kj}(z_1)\bbT
\underline{\bbA}_{kj}^{-1}(z_2)\bbT
n^{-1}tr\underline{\bbA}_{kj}^{-1}(z_2)\bbH\bbT\bbF_1^{-1}(z_1)\bbT\Big]\\
C_{15}&=&-E\Big[\underline{\beta}_{kj}(z_2)\underline{\xi}_{kj}(z_2)\xi_{kj}(z_1)\bbs_j^T\bbA^{-1}_{kj}(z_1)\bbT
\underline{\bbA}_{kj}^{-1}(z_2)\bbs_j\zeta_{kj4}\Big]
\end{eqnarray*}
and
$$
C_{16}=-E\Big[\underline{\beta}_{kj}(z_2)\underline{\xi}_{kj}(z_2)\xi_{kj}(z_1)
 \bbs_j^T\bbA^{-1}_{kj}(z_1)\bbT
\underline{\bbA}_{kj}^{-1}(z_2)\bbs_jn^{-1}tr\underline{\bbA}_{kj}^{-1}(z_2)\bbH\bbT\bbF_1^{-1}(z_1)\bbT\Big].
$$
Appealing to (\ref{g8}), (\ref{f16}), (\ref{g9}) and (\ref{g25})
yields
$$
|C_{1j}|\leq\frac{M}{nv^{5/2}}, \quad j=1,2,3.
$$
By (\ref{g10}), (\ref{g8}), (\ref{g9}) and (\ref{g26}) we obtain $|C_{14}|\leq\frac{M}{nv^{5/2}}$. We conclude from
(\ref{f7}), (\ref{g9}), (\ref{g23}), Lemma \ref{lem1} and
(\ref{g25}) that
$$
|C_{15}|\leq
\frac{M}{\sqrt{v}}E\Big|\sqrt{|\underline{\beta}_{kj}(z_2)|}\underline{\xi}_{kj}(z_2)\xi_{kj}(z_1)\|\bbs_j^T\bbA^{-1}_{kj}(z_1)\|\zeta_{kj4}\Big|
\qquad \qquad \qquad \qquad$$
$$
\qquad\leq
\frac{M}{\sqrt{v}}\Big(E(\sqrt{|\underline{\beta}_{kj}(z_2)}|\|\bbs_j^T\bbA^{-1}_{kj}(z_1)\|)^4E|\underline{\xi}_{kj}(z_2)|^4E|\xi_{kj}(z_1)|^4E|\zeta_{kj4}|^4\Big)^{1/4}
$$
$$
\leq\frac{M}{n^{3/2}v^{5/2}}.\qquad \qquad \qquad \qquad\qquad \qquad \qquad \qquad\qquad \qquad \qquad
$$
Similarly
$$
|C_{16}|\leq\frac{M}{nv^{5/2}}.
$$
Summarizing the above we have proved that
\begin{equation}
\Big|\frac{1}{n}E\Big[tr\bbT C(z_1)\bbT
E_k(\bbA_k^{-1}(z_2))\bbH\Big]\Big|\leq\frac{M}{nv^{5/2}}.
\end{equation}

Consider $D(z_1)$ now. When $j>k$ using (\ref{f13}) and recalling the definition of $ \hat{\bbA}^{-1}_{kjk}(z_1,z_2)$ in (\ref{h4}) we obtain
$$
\frac{1}{n}E\Big[tr \bbT D(z_1)\bbT E_k(\bbA_k^{-1}(z_2))\bbH\Big]
=\frac{1}{n^2}b_{12}(z_1)\sum\limits_{j\neq k}[D_1+D_2+D_3]
$$
where
$$
D_1=-\frac{1}{n}E\Big[tr\hat{\bbA}^{-1}_{kjk}(z_1,z_2)\bbT\bbA^{-1}_{kj}(z_1)\bbT\Big],\
D_2=E\Big[\zeta_{kj5}\xi_{kj}(z_1)\beta_{kj}(z_1)\Big]
$$
and
$$
D_3=\frac{1}{n}E\Big[tr \hat{\bbA}^{-1}_{kjk}(z_1,z_2)\bbT\bbA^{-1}_{kj}(z_1)\bbT\xi_{kj}(z_1)\beta_{kj}(z_1)\Big].
$$
with
$$
\zeta_{kj5}=\bbs_j^T \hat{\bbA}^{-1}_{kjk}(z_1,z_2)\bbT\bbA^{-1}_{kj}(z_1)\bbs_j-\frac{1}{n} tr \hat{\bbA}^{-1}_{kjk}(z_1,z_2)\bbT\bbA^{-1}_{kj}(z_1)\bbT,
$$
Using (\ref{f1}), (\ref{f10}) and Holder's inequality
\begin{equation}\label{g27}
\frac{1}{n}E\Big|tr \hat{\bbA}^{-1}_{kjk}(z_1,z_2)\bbT\bbA^{-1}_{kj}(z_1)\bbT\Big|^2\leq\frac{M}{v^5}.
\end{equation}
By Lemma 2.7 \cite{b4} and (\ref{f1})
\begin{equation}\label{g28}
E|\zeta_{kj5}|^2\leq\frac{M}{nv^6}.
\end{equation}
Thus
$$
|D_1|\leq\frac{M}{v^{5/2}},\  |D_2|\leq \frac{M}{nv^\frac{7}{2}}, \
|D_3|\leq\frac{M}{\sqrt{n}v^3}.
$$
Hence when $j>k$
 $$
\Big|\frac{1}{n}E\Big[tr \bbT D(z_1)\bbT
E_k(\bbA_k^{-1}(z_2))\bbH\Big]\Big|\leq\frac{M}{nv^{5/2}}.
 $$

When $j<k$, divide $\bbA_k^{-1}(z_2)$ into the sum:
$$\bbA_{kj}^{-1}(z_2)
-\bbA_{kj}^{-1}(z_2)\bbs_j\bbs_j^T\bbA_{kj}^{-1}(z_2)\beta_{kj}(z_2).$$
Apparently, the above argument for the case $j>k$ also works for the
term involving $E_k(\bbA_{kj}^{-1}(z_2))$ if we replace
$E_k(\bbA_k^{-1}(z_2))$ with $E_k(\bbA_{kj}^{-1}(z_2))$. Another
term is
$$
\frac{b_{12}(z_1)}{n^2}\sum\limits_{j\neq
k}E\Big[\beta_{kj}(z_1)\underline{\beta}_{kj}(z_2)\bbs_j^T\bbA_{kj}^{-1}(z_1)\bbT\underline{\bbA}_{kj}^{-1}(z_2)\bbs_j
$$$$
\qquad\times\bbs_j^T\underline{\bbA}_{kj}^{-1}(z_2)\bbH\bbT\bbF_1^{-1}(z_1)\bbT\bbA_{kj}^{-1}(z_1)\bbs_j\Big],
$$
which has, via ((\ref{f7}), (\ref{g19}) and (\ref{g22})
with $\bbD=\bbH\bbT\bbF_1^{-1}(z_1)\bbT$, an order of
$\frac{1}{nv^{5/2}}$.

Thus, the contribution from $C(z_1)$ and $D(z_1)$ is negligible.

Next consider $B(z_1)$. It follows from (\ref{f6}) that
\begin{eqnarray}
&&\frac{1}{n}E\Big[tr \bbT B(z_1)\bbT E_k(\bbA_k^{-1}(z_2))\bbH\Big]\label{f19}
\\ &=&\frac{1}{n}\sum\limits_{j<k}E\Big[\bbs_j^T
\hat{\bbA}^{-1}_{kjk}(z_1,z_2)\bbs_j-n^{-1}tr\bbT
\hat{\bbA}^{-1}_{kjk}(z_1,z_2)\Big]=B_1(z_1)+B_2(z_1),\nonumber
\end{eqnarray}
where
$$
B_1(z_1)=-\frac{1}{n}\sum\limits_{j<k}E\Big[\underline{\beta}_{kj}(z_2)\bbs_j^T\bbA_{kj}^{-1}(z_1)\bbT\underline{\bbA}_{kj}^{-1}(z_2)\bbs_j\bbs_j^T\underline{\bbA}_{kj}^{-1}(z_2)\bbH\bbT\bbF_1^{-1}(z_1)\bbs_j\Big]
$$
and
$$
B_2(z_1)=-\frac{1}{n^2}\sum\limits_{j<k}E\Big[\underline{\beta}_{kj}(z_2)\bbs_j^T\underline{\bbA}_{kj}^{-1}(z_2)\bbH\bbT\bbF_1^{-1}(z_1)\bbA_{kj}^{-1}(z_1)\bbT\underline{\bbA}_{kj}^{-1}(z_2)\bbs_j\Big].
$$
For $B_2(z_1)$ we further write
$$
B_2(z_1)=B_{21}(z_1)+B_{22}(z_1),
$$
where
$$
B_{21}(z_1)=-\frac{b_{12}(z_2)}{n^2}\sum\limits_{j<k}E\Big[\frac{1}{n}tr\underline{\bbA}_{kj}^{-1}(z_2)\bbH\bbT\bbF_1^{-1}(z_1)\bbA_{kj}^{-1}(z_1)\bbT\underline{\bbA}_{kj}^{-1}(z_2)\bbT\Big],
$$
and
\begin{eqnarray*}
B_{22}(z_1)&=&\frac{b_{12}(z_2)}{n^2}\sum\limits_{j<k}E\Big[\underline{\beta}_{kj}(z_2)\underline{\xi}_{kj}(z_2)\\
&& \qquad \times \bbs_j^T\underline{\bbA}_{kj}^{-1}(z_2)\bbH\bbT\bbF_1^{-1}(z_1)\bbA_{kj}^{-1}(z_1)\bbT\underline{\bbA}_{kj}^{-1}(z_2)\bbs_j\Big].
\end{eqnarray*}
The inequality similar to (\ref{g27}) ensures that
$|B_{21}(z_1)|\leq\frac{M}{nv^{5/2}}$, while
$|B_{22}(z_1)|\leq\frac{M}{n^{3/2}v^{3}}$ by estimates similar to
(\ref{g27}) and (\ref{g28}). Therefore $B_{2}(z_1)$ is negligible.

By (\ref{f13}), (\ref{f16}) and the estimates of $C_{1j},j=1,2,3,4$
in (\ref{g29}) we have
$$
\Big|B_1(z_1)+\frac{b_{12}(z_2)}{n}\sum\limits_{j<k}E\Big[\bbs_j^T\bbA_{kj}^{-1}(z_1)\bbT\underline{\bbA}_{kj}^{-1}(z_2)\bbs_j\bbs_j^T\underline{\bbA}_{kj}^{-1}(z_2)\bbH\bbT\bbF_1^{-1}(z_1)\bbs_j\Big]\Big|
$$$$=O(\frac{M}{nv^{5/2}}).
$$
In the mean time, (\ref{f16}) and (\ref{g25}) ensure that
$$
\frac{1}{n}\sum\limits_{j<k}E\Big[\bbs_j^T\bbA_{kj}^{-1}(z_1)\bbT\underline{\bbA}_{kj}^{-1}(z_2)\bbs_j\bbs_j^T\underline{\bbA}_{kj}^{-1}(z_2)\bbH\bbT\bbF_1^{-1}(z_1)\bbs_j\Big]
$$
$$
=\frac{1}{n^3}\sum\limits_{j<k}E\Big[tr\bbT\bbA_{kj}^{-1}(z_1)\bbT\underline{\bbA}_{kj}^{-1}(z_2)tr\underline{\bbA}_{kj}^{-1}(z_2)\bbH\bbT\bbF_1^{-1}(z_1)\bbT\Big]
+O(\frac{M}{nv^{5/2}}).
$$
Furthermore we apply (\ref{f5}), (\ref{g26}), (\ref{f10}), (\ref{f13}) and (\ref{g32}) to obtain
$$
\frac{1}{n^2}E\Big[tr\bbT\bbA_{kj}^{-1}(z_1)\bbT\underline{\bbA}_{kj}^{-1}(z_2)tr\underline{\bbA}_{kj}^{-1}(z_2)\bbH\bbT\bbF_1^{-1}(z_1)\bbT\Big]
$$$$=
\frac{1}{n^2}E\Big[tr\bbT\bbA_{kj}^{-1}(z_1)\bbT\underline{\bbA}_{kj}^{-1}(z_2)\Big]E\Big[tr\underline{\bbA}_{kj}^{-1}(z_2)\bbH\bbT\bbF_1^{-1}(z_1)\bbT\Big]
+O(\frac{M}{n^2v^{5}}).
$$
In addition, by (\ref{f10}), (\ref{g19}), (\ref{g22}), (\ref{f13}) and (\ref{g10}) we have
$$
\frac{1}{n^2}E\Big[tr\bbT\bbA_{kj}^{-1}(z_1)\bbT\underline{\bbA}_{kj}^{-1}(z_2)\Big]E\Big[tr\underline{\bbA}_{kj}^{-1}(z_2)\bbH\bbT\bbF_1^{-1}(z_1)\bbT\Big]
$$
$$
=\frac{1}{n^2}E\Big[tr\bbT\bbA_{k}^{-1}(z_1)\bbT\underline{\bbA}_{k}^{-1}(z_2)\Big]E\Big[tr\underline{\bbA}_{k}^{-1}(z_2)\bbH\bbT\bbF_1^{-1}(z_1)\bbT\Big]+
O(\frac{1}{nv^{2}}).
$$
It follows that
\begin{eqnarray}
&&\Big|B_1(z_1)+\frac{j-1}{n^3}b_{12}(z_2)E\Big(tr\bbT\bbA_{k}^{-1}(z_1)\bbT\underline{\bbA}_{k}^{-1}(z_2)\Big)\label{f20}\\
&&\times E\Big(tr\bbA_{k}^{-1}(z_2)\bbH\bbT\bbF_1^{-1}(z_1)\bbT\Big)\Big|
\leq\frac{M}{nv^{5/2}}. \nonumber
\end{eqnarray}

Summarizing the argument from (\ref{h5}) to (\ref{f20}) yields
\begin{equation}\label{f30}
\frac{1}{n}E\Big[tr\bbT \bbA_k^{-1}(z_1)\bbT
E_k(\bbA_k^{-1}(z_2))\bbH\Big]=-\frac{1}{n}E\Big[tr\bbA_{k}^{-1}(z_2)\bbH\bbT\bbF_1^{-1}(z_1)\bbT\Big]
\end{equation}
\begin{eqnarray*}
&&-\Big[\frac{j-1}{n^3}b_{12}(z_1)b_{12}(z_2)E\Big(tr\bbT\bbA_{k}^{-1}(z_1)\bbT\underline{\bbA}_{k}^{-1}(z_2)\Big)E\Big(tr\bbA_{k}^{-1}(z_2)\bbH\bbT\bbF_1^{-1}(z_1)\bbT\Big)\Big]
\\
&&+O(\frac{1}{nv^{5/2}}).
\end{eqnarray*}
When $\bbH=\bbI$,  (\ref{f30}) and (\ref{f22}) produce
$$
\frac{1}{n}E\Big[tr\bbT \bbA_k^{-1}(z_1)\bbT
E_k(\bbA_k^{-1}(z_2))\Big]\Big[1-\frac{j-1}{n}b_{12}(z_1)b_{12}(z_2)\frac{1}{n}tr\bbF_1^{-1}(z_2)\bbT\bbF_1^{-1}(z_1)\bbT\Big]
$$
\begin{equation}\label{f31}
=\frac{1}{n}tr\bbF_1^{-1}(z_2)\bbT\bbF_1^{-1}(z_1)\bbT+O(\frac{1}{nv^{5/2}}).
\end{equation}

By the formula ( see (2.2) of \cite{s3})
$$
\underline{m}_n(z)=-\frac{1}{zn}\sum\limits_{k=1}^n\beta_k(z)
$$
we have
\begin{equation}\label{d12}
E\beta_1(z)=-zE\underline{m}_n(z)
\end{equation}
It follows from (\ref{b22}) that
$$
|E\beta_1(z)-b_1(z)|=|b_1(z)^2E(\beta_1(z)\xi_1^2(z))|\leq\frac{M}{nv}
$$
and from (\ref{f10}) that
$$
|b_1(z)-b_{12}(z)|\leq\frac{M}{nv}.
$$
These, together with (\ref{f23}), imply that
\begin{equation}\label{g36}
|b_{12}(z)-\underline{m}_n^0(z)|\leq\frac{M}{nv^{3/2}}.
\end{equation}
This, along with (\ref{f31}), (\ref{f15}) and (\ref{f10}), yields that
 \begin{eqnarray}\label{f33}
&&\frac{1}{n}E\Big[tr\bbT \bbA_k^{-1}(z_1)\bbT
E_k(\bbA_k^{-1}(z_2))\Big]\times\Big[1-\frac{j-1}{n}b_n(z_1,z_2)\Big]
\non
&=&\frac{c_nb_n(z_1,z_2)}{z_1z_2\underline{m}_n^0(z_1)\underline{m}_n^0(z_2)}+O(\frac{1}{nv^{5/2}}),
\end{eqnarray}
where
$$
b_n(z_1,z_2)=c_n\underline{m}_n^0(z_1)\underline{m}_n^0(z_2)\int\frac{t^2dH_n(t)}{(1+t\underline{m}_n^0(z_1))
(1+t\underline{m}_n^0(z_2))}.
$$
It follows that
\begin{equation}
\label{f24}
a_{n1}(z_1,z_2)=b_n(z_1,z_2)\frac{1}{n}\sum\limits_{j=1}^n\frac{1}{1-\frac{j-1}{n}b_n(z_1,z_2)}+O(\frac{1}{nv^{5/2}}).
\end{equation}

From (2.19) in \cite{b2} and the inequality above (6.37) in
\cite{ker} we see that
\begin{equation}
\label{f56} |1-\frac{j-1}{n}b_n(z_1,z_2)|\geq Mv, \quad
|1-tb_n(z_1,z_2)|\geq Mv,\quad\text{for any} \quad t\in [0,1].
\end{equation}
It follows that
$$
|\frac{1}{n}\sum\limits_{j=1}^n\frac{1}{1-\frac{j-1}{n}b_n(z_1,z_2)}-\int^1_0\frac{1}{1-tb_n(z_1,z_2)}|\leq\frac{M}{nv^2}.
$$
This ensures that
$$
a_{n1}(z_1,z_2)=b_n(z_1,z_2)\int^1_0\frac{1}{1-tb_n(z_1,z_2)}+O(\frac{1}{nv^{5/2}})
$$$$
=-\log (1-b_n(z_1,z_2))+O(\frac{1}{nv^{5/2}})
$$
\begin{equation}\label{g30}
=-\log \Big((z_1-z_2)\underline{m}_n^0(z_1)\underline{m}_n^0(z_2)\Big)-\log (\underline{m}_n^0(z_1)-\underline{m}_n^0(z_2))+O(\frac{1}{nv^{5/2}}),
\end{equation}
where in the last step one uses the fact that by (\ref{f51})
$$
z_1-z_2=\frac{\underline{m}_n^0(z_1)-\underline{m}_n^0(z_2)}{\underline{m}_n^0(z_1)\underline{m}_n^0(z_2)}(1-b_n(z_1,z_2)).
$$

So far we have considered $z\in\gamma_2$. The above argument evidently works for the case of $z\in\gamma_1$ due to symmetry. To deal with the cases when $z$ belongs to two vertical lines of the contour, we need the estimates (1.9a) and (1.9b) of \cite{b2}, which hold under our truncation level.
That is
\begin{equation}
P(\|\bbA\|\geq \mu_1)=o(n^{-l}),\ P(\lambda_{\min}^\bbA\leq \mu_2)=o(n^{-l}),
\end{equation}
for any $\mu_1>\limsup \bbT(1+\sqrt{c})^2$,
$\mu_2<\liminf\bbT(1-\sqrt{c})^2$ and $l$. This implies that
\begin{equation}
P(\|\bbA_k\|\geq \mu_1)=o(n^{-l}),\ P(\lambda_{\min}^{\bbA_k}\leq \mu_2)=o(n^{-l}).\label{h8}
\end{equation}
Let $B=\bigcap\limits_{k=1}^n B_k$ where $B_k=(a_l-\eta<\lambda_{\min}^{\bbA_k}<\|\bbA_k\|<a_r-\eta)$ with $\eta>0$ so that $a_r-\eta>\limsup \bbT(1+\sqrt{c})^2$ and $a_l-\eta<\liminf\bbT(1-\sqrt{c})^2$. Also define $B_{n+1}=(a_l-\eta<\lambda_{\min}^{\bbA}<\|\bbA\|<a_r-\eta)$ and let $C_k=B_k\cap B_{n+1}$.  It follows that on the two vertical lines $\gamma_2\cup \gamma_4$ (\ref{h6}) is equal to
\begin{eqnarray}\label{h7}
&=&-\frac{1}{h}\frac{1}{2\pi i}\sum\limits_{k=1}^n(E_k-E_{k-1})\int
K'(\frac{x-z}{h})\log \beta_k(z)dz\non
&=&-\frac{1}{h}\frac{1}{2\pi i}\sum\limits_{k=1}^n(E_k-E_{k-1})\int
K'(\frac{x-z}{h})\log \beta_k(z)I(C_k)dz+o_p(1).
\end{eqnarray}

We then introduce $\hat{\beta}_k(z)$, a truncated version of $\beta_k(z)$. Select a sequence of positive numbers $\varepsilon_n$
satisfying for some $\beta\in(0,1)$,
\begin{equation}\label{g21}
\varepsilon_n\downarrow0,\ \ \varepsilon_n\geq n^{-\beta}.
\end{equation}
Define
$$
\gamma_l=\{a_l+iv: v\in[n^{-1}\varepsilon_n,v_0h]\cup [-v_0h,-n^{-1}\varepsilon_n]\}
$$
and
$$
\gamma_r=\{a_r+iv:v\in[n^{-1}\varepsilon_n,v_0h]\cup [-v_0h,-n^{-1}\varepsilon_n]\}.
$$
Write $\gamma_n=\gamma_r\cup\gamma_l$. We
can now define the process
\begin{equation}\label{g20}
\hat{\beta}_k(z)=\begin{cases} \beta_k(z),& \text{if}\ z\in
\gamma_n\\
\frac{nv+\varepsilon_n}{2\varepsilon_n}\beta_k(z_{r1})+\frac{\varepsilon_n-nv}{2\varepsilon_n}\beta_k(z_{r2}),&
\text{if}\  u=a_r,v\in[-n^{-1}\varepsilon_n,n^{-1}\varepsilon_n],\\
\frac{nv+\varepsilon_n}{2\varepsilon_n}\beta_k(z_{l1})+\frac{\varepsilon_n-nv}{2\varepsilon_n}\beta_k(z_{l2}),&
\text{if}\  u=a_l,v\in[-n^{-1}\varepsilon_n,n^{-1}\varepsilon_n],
\end{cases}
\end{equation}
where $z_{r1}=a_r+in^{-1}\varepsilon_n,
z_{r2}=a_r-in^{-1}\varepsilon_n, z_{l1}=a_l+in^{-1}\varepsilon_n,
z_{l2}=a_l-in^{-1}\varepsilon_n$.
Note that $\|(\bbA_k-z\bbI)^{-1}I(C_k)\|\leq \frac{1}{\eta}$, $\|(\bbA-z\bbI)^{-1}I(C_k)\|\leq \frac{1}{\eta}$ and then $|\beta_k(z)I(C_k)|=|1-\bbs_k^T(\bbA-z\bbI)^{-1}\bbs_kI(C_k)|\leq M\bbs_k^T\bbs_k$. It follows that
\begin{equation}\label{h10}
P(|Q_k|\geq 1/2)\leq\frac{2\varepsilon_nE(\bbs_k^T\bbs_k)}{n}\rightarrow 0,
\end{equation}
where $Q_k=\beta_k(z)(1/\beta_k(z)-1/\hat{\beta}_k(z))I(C_k).$
By (\ref{h9}) and (\ref{g3}) we obtain
$$
\Big|\frac{1}{h}\frac{1}{2\pi i}\sum\limits_{k=1}^n(E_k-E_{k-1})\oint
K'(\frac{x-z}{h})(\log \beta_k(z)-\log\hat{\beta}_k(z))I(C_k)I(|Q_k|<\frac{1}{2})dz\Big|
$$
$$
\leq \frac{M\varepsilon_n^2}{n^2}\sum\limits_{k=1}^n(\bbs_k^T\bbs_k)^4\stackrel{i.p.}\longrightarrow 0.
$$
This, together with (\ref{h7}) and (\ref{h10}), ensures that
$$
(\ref{h6})=\frac{1}{h}\frac{1}{2\pi i}\sum\limits_{k=1}^n(E_k-E_{k-1})\int
K'(\frac{x-z}{h})\log \hat{\beta}_k(z)I(C_k)dz+o_p(1)
$$
$$
=\frac{1}{h}\frac{1}{2\pi i}\sum\limits_{k=1}^n(E_k-E_{k-1})\int
K'(\frac{x-z}{h})\log \Big(\frac{\hat{\beta}^{tr}_k(z)}{\hat{\beta}_k(z)}\Big)dz+o_p(1),
$$
where $\hat{\beta}^{tr}_k(z)$ is similarly defined according to $\hat{\beta}_k(z)$. Moreover, for the truncation versions, the
higher moments of $\bbA^{-1}(z)$, $\bbA_k^{-1}(z)$ and
$\bbA_{kj}^{-1}(z)$ are bounded (see (3.1) in \cite{b2}). Also, as
pointed out in the paragraph below (3.2) in \cite{b2}, the moments
of $\beta_1(z),\beta_{12}(z), \beta^{tr}(z),
s_1^T\bbA_1^{-1}(z_1)\bbT\bbA_1^{-1}(z_2)\bbs_1$ are bounded as
well. Using these facts, all the estimates holding for
$z\in\gamma_1\cup \gamma_2$ also holds for the case where
$z=z_{r1},z_{r2}$ or $z\in \gamma_r\cup\gamma_l$.  Via these facts,
the arguments of the cases $z=z_{r1},z_{r2}$ or $z\in
\gamma_r\cup\gamma_l$, two vertical lines, can follow from those of the
case $z\in\gamma_1\cup \gamma_2$ (here we omit the details) and
hence their limits have the same form as (\ref{g30}).

In the mean time, appealing to Cauchy's theorem gives
\begin{equation}\label{g53}
\frac{1}{h^2}\oint_{\mathcal{C}_1}\oint_{\mathcal{C}_2}
K'(\frac{x_1-z_1}{h})K'(\frac{x_2-z_2}{h})\log\Big((z_1-z_2)\underline{m}_n^0(z_1)\underline{m}_n^0(z_2)\Big)dz_1dz_2=0,
\end{equation}
where the contour $\mathcal{C}_2$ is also a rectangle formed with four
vertices $a_l-\varepsilon\pm 2iv_0h$ and $a_{r}+\varepsilon\pm
2iv_0h$ with $\varepsilon>0$. One should note that the contour
$\mathcal{C}_2$ encloses the contour $\mathcal{C}_1$.
Thus, in view of (\ref{g30}), it remains to find the limit of the
following
\begin{equation}\label{f62}
-\frac{1}{2h^2\pi^2}\oint_{\mathcal{C}_1}\oint_{\mathcal{C}_2}
K'(\frac{x_1-z_1}{h})K'(\frac{x_2-z_2}{h})\log(\underline{m}_n^0(z_1)-\underline{m}_n^0(z_2))dz_1dz_2,
\end{equation}
which is done in Appendix 3.

\section{The limit of mean function}

The aim in the section is to find the limit of
$$
\frac{1}{2\pi i}\oint K(\frac{x-z}{h})n(Em_n(z)-m_n^0(z))dz.
$$
It is thus sufficient to investigate the uniform convergence
$nh(E\underline {m}_n(z)-\underline{m}_n^0(z))$ on the contour.

Recall that $ \bbF^{-1}(z)=(E\underline{m}_n\bbT+\bbI)^{-1}$ and
then write ( see (5.2) in \cite{b4})
 \begin{equation}\label{b15}
n(c_n\int\frac{dH_n(t)}{1+tE\underline{m}_n}+zc_nE(m_n(z)))=nD_n,
 \end{equation}
 where
 $$
D_n=E\beta_1\Big[\bbs_1^T\bbA_1^{-1}(z)\bbF^{-1}(z)\bbs_1-\frac{1}{n}E\Big(tr\bbF^{-1}(z)\bbT\bbA^{-1}(z)\Big)\Big].
$$
It follows that (see (3.20) in \cite{b4})
\begin{equation}
\label{a32}
n(E\underline{m}_n(z)-\underline{m}_n^0(z))=-\frac{n\underline{m}_n^0(z)D_n}{1-c_nE\underline{m}_n\underline{m}_n^0\int\frac{t^2dH_n(t)}{(1+tE\underline{m}_n)(1+t\underline{m}_n^0)}}.
\end{equation}


Considered $z\in\gamma_1\cup\gamma_2$ first. Applying (\ref{b22}) and (\ref{b23}) yields
\begin{eqnarray}\label{b28}
&&E\Big(tr\bbF^{-1}(z)\bbT\bbA_1^{-1}(z)\Big)-E\Big(tr\bbF^{-1}(z)\bbT\bbA^{-1}(z)\Big)\non
&=&E\Big(\beta_1\bbs_1^T\bbA_1^{-1}(z)\bbF^{-1}(z)\bbT\bbA_1^{-1}(z)\bbs_1\Big)
\non
&=&b_1E\Big([1-b_1\xi_1+b_1\beta_1\xi_1^2(z)]\bbs_1^T\bbA_1^{-1}(z)\bbF^{-1}(z)\bbT\bbA_1^{-1}(z)\bbs_1\Big)\non
&=&b_1E\frac{1}{n}tr
\bbA_1^{-1}(z)\bbF^{-1}(z)\bbT\bbA_1^{-1}(z)\bbT-d_{n1}+d_{n2}+d_{n3}\non
&=&b_1E\frac{1}{n}tr
\bbA_1^{-1}(z)\bbF^{-1}(z)\bbT\bbA_1^{-1}(z)\bbT+O(\frac{1}{nv^{5/2}}),
\end{eqnarray}
where
$$
d_{n1}=b_1^2E\Big[\eta_1(z)(\bbs_1^T\bbA_1^{-1}(z)\bbF^{-1}(z)\bbT\bbA_1^{-1}(z)\bbs_1-\frac{1}{n}tr
\bbA_1^{-1}(z)\bbF^{-1}(z)\bbT\bbA_1^{-1}(z))\Big]
$$\begin{eqnarray*}
d_{n2}&=&\frac{b_1^2}{n}E\Big[\Big(tr\bbA^{-1}(z)\bbT-Etr\bbA^{-1}(z)\bbT\Big)\bbs_1^T\bbA_1^{-1}(z)\bbF^{-1}(z)\bbT\bbA_1^{-1}(z)\bbs_1\Big]\non
&=&\frac{b_1^2}{n^2}E\Big[\Big(tr\bbA^{-1}(z)\bbT-Etr\bbA^{-1}(z)\bbT\Big)\non
&\qquad&\times\Big(tr\bbA_1^{-1}(z)\bbF^{-1}(z)\bbT\bbA_1^{-1}(z)\bbT-Etr\bbA_1^{-1}(z)\bbF^{-1}(z)\bbT\bbA_1^{-1}(z)\bbT\Big)\Big]
\end{eqnarray*}
and
$$
d_{n3}=b_1E\Big[\beta_1\xi_1^2(z)\bbs_1^T\bbA_1^{-1}(z)\bbF^{-1}(z)\bbT\bbA_1^{-1}(z)\bbs_1\Big]
$$
$$
=b_1E\Big[\beta_1\xi_1^2(z)(\bbs_1^T\bbA_1^{-1}(z)\bbF^{-1}(z)\bbT\bbA_1^{-1}(z)\bbs_1-\frac{1}{n}tr\bbA_1^{-1}(z)\bbF^{-1}(z)\bbT\bbA_1^{-1}(z)\bbT)\Big]
$$
$$
+b_1E\Big[\beta_1\xi_1^2(z)\frac{1}{n}tr\bbA_1^{-1}(z)\bbF^{-1}(z)\bbT\bbA_1^{-1}(z)\bbT)\Big].
$$
It follows from (\ref{g32}) and Lemma \ref{lem1} that
$$
|d_{nj}|\leq\frac{M}{nv^{5/2}},\quad j=1,3,
$$
where we also use the fact that
\begin{eqnarray}
&&|\frac{1}{n}tr\bbA_1^{-1}(z)\bbF^{-1}(z)\bbT\bbA_1^{-1}(z)\bbT)|\non
&\leq&
\frac{M}{n}\Big[tr\bbF^{-1}(z)\bbF^{-1}(\bar{z})tr(\bbA_1^{-1}(z)\bbA_1^{-1}(\bar
z))^2\Big]^{1/2}\leq\frac{M}{v^{3/2}}\label{b24}.
\end{eqnarray}
While, Lemma \ref{lem1} and an estimate similar to (\ref{f5}) yield
$|d_{n2}|\leq\frac{M}{nv^{5/2}}$.

Next by (\ref{b22}) \begin{eqnarray}
&&nE\Big[\beta_1\bbs_1^T\bbA_1^{-1}(z)\bbF^{-1}(z)\bbs_1\Big]-E(\beta_1)E\Big(tr\bbF^{-1}(z)\bbT\bbA_1^{-1}(z)\Big)\non
&=&-nb_1^2E\Big[\xi_1\bbs_1^T\bbA_1^{-1}(z)\bbF^{-1}(z)\bbs_1\Big]+nb^2_1E\Big[\beta_1\xi_1^2\bbs_1^T\bbA_1^{-1}(z)\bbF^{-1}(z)\bbs_1\Big]\non
&&
-b^2_1E(\beta_1\xi_1^2)E\Big[tr\bbA_1^{-1}(z)\bbF^{-1}(z)\bbT\Big]\non
&=&f_{n1}+f_{n2}+f_{n3}+f_{n4},\label{f27}
\end{eqnarray}
where
$$
f_{n1}=-nb_1^2E\Big[\eta_1(\bbs_1^T\bbA_1^{-1}(z)\bbF^{-1}(z)\bbs_1-\frac{1}{n}tr\bbA_1^{-1}(z)\bbF^{-1}(z)\bbT)\Big],
$$
\begin{eqnarray*}
f_{n2}&=&-b_1^2E\Big[\Big(tr\bbA^{-1}(z)\bbT-Etr\bbA^{-1}(z)\bbT\Big)\bbs_1^T\bbA_1^{-1}(z)\bbF^{-1}(z)\bbs_1\Big]\non
&=&\frac{b_1^2}{n}E\Big[\Big(tr\bbA^{-1}(z)\bbT-Etr\bbA^{-1}(z)\bbT\Big)\non
&\qquad&\times\Big(tr\bbA_1^{-1}(z)\bbF^{-1}(z)\bbT-Etr\bbA_1^{-1}(z)\bbF^{-1}(z)\bbT\Big)\Big],
\end{eqnarray*}
$$
f_{n3}=nb^2_1E\Big[\beta_1\xi_1^2\Big(\bbs_1^T\bbA_1^{-1}(z)\bbF^{-1}(z)\bbs_1-\frac{1}{n}tr\bbA_1^{-1}(z)\bbF^{-1}(z)\bbT\Big)\Big]
$$
and
$$
f_{n4}=b_1^2\Big(E\Big[\beta_1\xi_1^2tr\bbA_1^{-1}(z)\bbF^{-1}(z)\bbT\Big]-E(\beta_1\xi_1^2)E\Big[tr\bbA_1^{-1}(z)\bbF^{-1}(z)\bbT\Big]\Big).
$$
We conclude from (\ref{g26}) and Lemma \ref{lem1} that
$$
\sqrt{h}|f_{n2}|\leq\frac{M}{nv^{5/2}}
$$
and that
$$
\sqrt{h}|f_{n3}|\leq\frac{M}{\sqrt{nv^{2}}},
$$
because by (\ref{g24}) and Lemma \ref{lem8}
$$
E|\bbs_1^T\bbA_1^{-1}(z)\bbF^{-1}(z)\bbs_1-\frac{1}{n}tr\bbA_1^{-1}(z)\bbF^{-1}(z)\bbT|^2\leq\frac{M}{nv}.
$$
It follows from Holder's inequality and (\ref{g26}) that
\begin{eqnarray*}
|f_{n4}|&\leq&
M(E|\beta_1|^4E|\xi_1|^4)^{1/4}(E|tr\bbA_1^{-1}(z)\bbF^{-1}(z)\bbT-Etr\bbA_1^{-1}(z)\bbF^{-1}(z)\bbT|^2)^{1/2}\\
&\leq&\frac{M}{nv^{5/2}}.
\end{eqnarray*}

Therefore from (\ref{b15}), (\ref{b28}), (\ref{f27}) (\ref{f49}) and
(\ref{i1}) we obtain
\begin{eqnarray*}
n\sqrt{h}D_n&=&b_1^2E\frac{\sqrt{h}}{n}tr
\bbA_1^{-1}(z)\bbF^{-1}(z)\bbT\bbA_1^{-1}(z)\bbT+f_{n1}+O(\frac{1}{\sqrt{nv^{5/2}}})\non
&=&-b_1^2E\frac{\sqrt{h}}{n}tr
\bbA_1^{-1}(z)\bbF^{-1}(z)\bbT\bbA_1^{-1}(z)\bbT-b_1^2\sqrt{h}\frac{EX_{11}^4-3}{n}\non
\end{eqnarray*}
$$
\times\sum\limits_{k=1}^pE\Big[
(\bbT^{1/2}\bbA_1^{-1}(z)\bbT^{1/2})_{kk}(\bbT^{1/2}\bbA_1^{-1}(z)\bbF^{-1}(z)\bbT^{1/2})_{kk}\Big]+O(\frac{1}{\sqrt{nv^{5/2}}})
$$
\begin{equation}\label{f38}
 =-b_1^2E\frac{\sqrt{h}}{n}tr
\bbA_1^{-1}(z)\bbF^{-1}(z)\bbT\bbA_1^{-1}(z)\bbT+O(\frac{1}{\sqrt{nv^{5/2}}}).
\end{equation}

A careful inspection on the argument leading to (\ref{f30})
indicates that it also works for
$E\frac{1}{n}tr\bbA_1^{-1}(z)\bbF^{-1}(z)\bbT\bbA_1^{-1}(z)\bbT$ and
the main difference is that treating the latter does not need to
distinguish between the cases $j<k$ and $j>k$. Thus, applying (\ref{f30})
with $\bbH=\bbF^{-1}(z),z_1=z_2=z$ and replacing $(j-1)/n$ there
with one we have
\begin{equation}\label{f32}
\frac{1}{n}E\Big[tr\bbT\bbA_1^{-1}(z)\bbF^{-1}(z)\bbT\bbA_1^{-1}(z)\Big]=-\frac{1}{n}E\Big[tr\bbA_{1}^{-1}(z)\bbF^{-1}(z)\bbT\bbF_1^{-1}(z)\bbT\Big]-
\end{equation}$$\frac{b_{12}(z)b_{12}(z)}{n^2}E\Big(tr\bbT\bbA_{1}^{-1}(z)\bbT\underline{\bbA}_{1}^{-1}(z)\Big)E\Big(tr\bbA_{1}^{-1}(z)\bbF^{-1}(z)\bbT\bbF_1^{-1}(z)\bbT\Big)
+O(\frac{1}{nv^{5/2}}).
$$

We claim that
\begin{equation}\label{f23}
|E\underline{m}_n(z)-\underline{m}_n^0(z)|\leq\frac{M}{nv^{3/2}}
\end{equation}
so that (\ref{f33}) is applicable.  To prove (\ref{f23}), we first
show that
\begin{equation}
\label{f36}
|E\underline{m}_n(z)-\underline{m}_n^0(z)|\leq\frac{M}{nv^{2}}.
\end{equation}
Evidently, (\ref{f14}) yields
$$
|\frac{1}{n}E\Big(tr\bbT\bbA_{1}^{-1}(z)\bbT\bbA_{1}^{-1}(z)\Big)|\leq\frac{M}{v}.
$$
It follows from (\ref{g10}) and (\ref{f32}) that
\begin{equation}\label{f37}
|\frac{1}{n}E\Big[tr\bbT\bbA_1^{-1}(z)\bbF^{-1}(z)\bbT\bbA_1^{-1}(z)\Big]|\leq\frac{M}{v}.
\end{equation}
This, together with (\ref{f38}), ensures that
\begin{equation}\label{f39}
|\bbD_n|\leq\frac{M}{nv}.
\end{equation}
It is proved in \cite{ker} that (see (6.38) in \cite{ker})
$$
|1-c_n\underline{m}_n^0(z)E\underline{m}_n(z)\int\frac{t^2dH_n(t)}{(1+t\underline{m}_n^0(z))(1+tE\underline{m}_n(z))}|\geq
M_3v.
$$
Hence (\ref{f36}) follows from the above inequality, (\ref{f39}) and
(\ref{a32}). We then conclude from (\ref{d1}), (\ref{d2}) and
(\ref{f11}) that
\begin{equation}\label{f40}
|1-c_n\underline{m}_n^0(z)E\underline{m}_n(z)\int\frac{t^2dH_n(t)}{(1+t\underline{m}_n^0(z))(1+tE\underline{m}_n(z))}|\geq
M_2\sqrt{v},
\end{equation}
which, along with (\ref{f39}) and (\ref{a32}), immediately gives
$(\ref{f23})$.

We are now in a position to use (\ref{f33}) with $z_1=z_2=z$ and
replacing $(j-1)/n$ there with one so that
\begin{equation}
\frac{1}{n}E\Big[tr\bbT \bbA_1^{-1}(z)\bbT
\bbA_1^{-1}(z)\Big]=\frac{\frac{c_n}{z}\int\frac{t^2dH_n(t)}{(1+t\underline{m}_n^0(z))^2
}}{1-c_n\underline{m}_n^0(z)\underline{m}_n^0(z)\int\frac{t^2dH_n(t)}{(1+t\underline{m}_n^0(z))^2}}+O(\frac{1}{nv^{5/2}}).
\end{equation}
A direct application of (\ref{f22}) and (\ref{f23})  yields
\begin{equation}\label{f34}
\frac{1}{n}E\Big[tr\bbA_{1}^{-1}(z)\bbF^{-1}(z)\bbT\bbF_1^{-1}(z)\bbT\Big]=-\frac{c_n}{z^2}\int\frac{t^2}{(1+t\underline{m}_n^0(z))^3}+O(\frac{1}{nv^{5/2}}).
\end{equation}
It follows from (\ref{f32})-(\ref{f34}) and (\ref{g36}) that
\begin{equation}\label{f35}
\sqrt{h}\frac{1}{n}E\Big[tr\bbT\bbA_1^{-1}(z)\bbF^{-1}(z)\bbT\bbA_1^{-1}(z)\Big]=\sqrt{h}\frac{\frac{c_n}{z}\int\frac{t^2dH_n(t)}{(1+t\underline{m}_n^0(z))^3
}}{1-c_n(\underline{m}_n^0(z))^2\int\frac{t^2dH_n(t)}{(1+t\underline{m}_n^0(z))^2}}+O(\frac{1}{nv^{5/2}}).
\end{equation}

We then conclude from (\ref{a32}), (\ref{f38}), (\ref{f35}),
(\ref{d1}) and (\ref{f40}) that
\begin{equation}
nh(E\underline{m}_n(z)-\underline{m}_n^0(z))=h\frac{c_n(\underline{m}_n^0(z))^3\int\frac{t^2dH_n(t)}{(1+t\underline{m}_n^0(z))^3
}}{\Big(1-c_n(\underline{m}_n^0(z))^2\int\frac{t^2dH_n(t)}{(1+t\underline{m}_n^0(z))^2}\Big)^2}+O(\frac{1}{\sqrt{nv^{5/2}}}).
\end{equation}
The case when $z$ lies in the vertical lines on the contour can be
handled similarly as pointed out in the last section with the truncation version of $\beta_k(z)$ replaced with the truncation version of $n(Em_n(z)-m_n^0(z))$ (one may refer to \cite{b2} as well).

It remains to find the limit of the following
\begin{equation}\label{f50}
\frac{1}{4\pi i}\oint
K(\frac{x-z}{h})\frac{c_n(\underline{m}_n^0(z))^3\int\frac{t^2dH_n(t)}{(1+t\underline{m}_n^0(z))^3
}}{\Big(1-c_n(\underline{m}_n^0(z))^2\int\frac{t^2dH_n(t)}{(1+t\underline{m}_n^0(z))^2}\Big)^2}dz,
\end{equation}
which is done in Appendix 3.

\section{The proof of Theorem \ref{theo2}}

For any finite constants $l_1,\cdots,l_r$, by Cauchy's theorem and Fubini's theorem we write
\begin{equation}
\frac{n}{\sqrt{\ln \frac{1}{h}
}}\sum\limits_{j=1}^rl_j\Big(F_n(x_j)-\int^{x_j}_{-\infty}\frac{1}{h}\int
K(\frac{t-y}{h})dF^{c_n,H_n}(y)dt\Big)\label{g42}
\end{equation}
$$=\frac{n}{\sqrt{\ln \frac{1}{h}
}}\sum\limits_{j=1}^rl_j\Big(\int^{x_j}_{-\infty}f_n(t)dt-\int^{x_j}_{-\infty}\frac{1}{h}\int
K(\frac{t-y}{h})dF^{c_n,H_n}(y)dt\Big)
$$
$$
=-\frac{n}{2h\pi i\sqrt{\ln
\frac{1}{h}}}\sum\limits_{j=1}^rl_j(\int^{x_j}_{-\infty}\oint_{\mathcal{C}_1}
K(\frac{t-z}{h})(tr\bbA^{-1}(z)-ns_n(z))dzdt
$$
$$
=-\frac{n}{2h\pi i\sqrt{\ln
\frac{1}{h}}}\sum\limits_{j=1}^rl_j\oint_{\mathcal{C}_1}
\Big[\int^{x_j}_{-\infty}K(\frac{t-z}{h})dt\Big](tr\bbA^{-1}(z)-ns_n(z))dz,
$$
where the contour ${\mathcal{C}_1}$ is defined as before.

Furthermore, we conclude from (\ref{g39}) and integration by parts that
$$
\frac{1}{2h\pi i\sqrt{\ln \frac{1}{h}}}\oint_{\mathcal{C}_1}
\Big[\int^{x}_{-\infty}K(\frac{t-z}{h})dt\Big](tr\bbA^{-1}(z)-Etr\bbA^{-1}(z))dz
$$
$$
=-\frac{1}{2h\pi i\sqrt{\ln
\frac{1}{h}}}\sum\limits_{k=1}^n(E_k-E_{k-1})\oint_{\mathcal{C}_1}
\Big[\int^{x}_{-\infty}K(\frac{t-z}{h})dt\Big]\Big[\log
\beta_k(z)\Big]'dz
$$
\begin{equation}
=\frac{1}{2h\pi i\sqrt{\ln
\frac{1}{h}}}\sum\limits_{k=1}^n(E_k-E_{k-1})\oint_{\mathcal{C}_1}
K(\frac{x-z}{h})\log
\frac{\beta_k^{tr}(z)}{\beta_k(z)}dz,\label{g40}
\end{equation}
where in the last step one uses the fact that via (\ref{a25})
\begin{equation}\label{g47}
\Big[\int^{x}_{-\infty}K(\frac{t-z}{h})dt\Big]'=K(\frac{x-z}{h}).
\end{equation}
It is observed that the unique difference between (\ref{g40}) and
(\ref{b12}) is that the test function $K'(\frac{x-z}{h})$ there is
replaced by $K(\frac{x-z}{h})$. Therefore, repeating the arguments
in Section 2 we obtain that (\ref{g40}) is asymptotically normal with
covariance (see \eqref{f62})
\begin{equation}\label{g43}
-\frac{1}{2h^2\pi^2\ln
\frac{1}{h}}\oint_{\mathcal{C}_1}\oint_{\mathcal{C}_2}
K(\frac{x_1-z_1}{h})K(\frac{x_2-z_2}{h})\log(\underline{m}_n^0(z_1)-\underline{m}_n^0(z_2))dz_1dz_2.
\end{equation}

Also, for the nonrandom part we have
\begin{equation}\label{g48}
\frac{1}{2h\pi i\sqrt{\ln \frac{1}{h}}}\oint_{\mathcal{C}_1}
\Big[\int^{x}_{-\infty}K(\frac{t-z}{h})dt\Big]n(Etr\bbA^{-1}(z)-m_n^0(z))dz.
\end{equation}
Note that
$$
|\frac{1}{h}\int^{x}_{-\infty}K(\frac{t-z}{h})dt|<\infty.
$$
Thus, repeating the arguments in Section 3 we see that (\ref{g48})
becomes
\begin{equation}\label{g44}
\frac{1}{4h\pi i\sqrt{\ln \frac{1}{h}}}\oint
\Big[\int^{x}_{-\infty}K(\frac{t-z}{h})dt\Big]\frac{c_n(\underline{m}_n^0(z))^3\int\frac{t^2dH_n(t)}{(1+t\underline{m}_n^0(z))^3
}}{\Big(1-c_n(\underline{m}_n^0(z))^2\int\frac{t^2dH_n(t)}{(1+t\underline{m}_n^0(z))^2}\Big)^2}dz
\end{equation}
$$
+O(\frac{1}{nh^3\sqrt{\ln\frac{1}{h}}}).
$$
The limits of (\ref{g43}) and (\ref{g44}) are derived in Appendix 3.

\section{Appendix 1}

This Appendix collects some frequently used Lemmas.
\begin{lemma} When $z$ lies in the segments $\gamma_1\cup \gamma_2$,
\label{lem1} $$|\underline{m}_n^0(z)|\leq M, |Em_n(z)|\leq M,
|b_1(z)|\leq M,\ E|\beta_1(z)|^4\leq M,\ E|\beta_1^{tr}(z)|^4\leq
M$$ and
\begin{equation}\label{a39}
\frac{1}{n^8}E|tr\bbA^{-1}(z)\bbD-Etr\bbA^{-1}(z)\bbD|^8
\leq\frac{M}{n^8v^{12}\|\bbD\|^8},
\end{equation}
\begin{equation}\label{b18}
E|\eta_1(z)|^8\leq\frac{M}{n^4v^4},\quad  E|\xi_1(z)|^8\leq
\frac{M}{n^4v^4},
\end{equation}
where $\bbD$ is a non-random matrix with nonzero spectral norm.
\end{lemma}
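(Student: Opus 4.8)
The quantities in Lemma~\ref{lem1} fall into two groups, and the plan is to treat them in the order: first the three ``modulus'' bounds $|\underline m_n^0(z)|\le M$, $|Em_n(z)|\le M$, $|b_1(z)|\le M$, which are the inputs for everything else; then the concentration bound (\ref{a39}); and finally (\ref{b18}) together with the moment bounds for $\beta_1$ and $\beta_1^{tr}$. For the modulus bounds I would argue exactly as in \cite{b2,b4} and \cite{ker}. The only feature not present in the classical fixed-contour CLT is that on $\gamma_1\cup\gamma_2$ the imaginary part is $v=v_0h\to0$, so $z$ approaches the support $[a,b]$ of $F^{c,H}$ and the trivial bound $|m_F|\le v^{-1}$ is useless. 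The substitute is the regularity of the limiting density established in \cite{ker}: $f_{c_n,H_n}$ is bounded and uniformly H\"older on a fixed neighbourhood of $[a,b]$, with the usual square-root behaviour at the edges. Splitting $\int f(x)(x-z)^{-1}dx$ into the parts with $|x-u|\le1$ and $|x-u|>1$ and using H\"older continuity then gives $|\underline m_n^0(z)|\le M$ uniformly on the strip; $|Em_n(z)|\le M$ follows because $Em_n$ is the Stieltjes transform of $EF^{\bbA_n}$, whose density obeys the same uniform bounds (equivalently, via $|Em_n-m_n^0|=o(1)$ and the rank-one resolvent inequality $|tr\bbA^{-1}(z)-tr\bbA_1^{-1}(z)|\le v^{-1}$); and $|b_1(z)|\le M$ follows from $b_1=1/(1+n^{-1}Etr\bbT\bbA_1^{-1}(z))$ together with its closeness to $\underline m_n^0$. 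In the write-up these three facts are quoted from \cite{ker} (and \cite{b2,b4}).

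For (\ref{a39}) I would use a martingale decomposition over the columns. Writing $tr\bbA^{-1}(z)\bbD-Etr\bbA^{-1}(z)\bbD=\sum_{k=1}^n(E_k-E_{k-1})tr\bbA^{-1}(z)\bbD$ and replacing $\bbA^{-1}$ by $\bbA_k^{-1}$ inside the $k$th increment via (\ref{b23}) produces martingale differences $\gamma_k=-(E_k-E_{k-1})[\beta_k(z)\,\bbs_k^T\bbA_k^{-1}(z)\bbD\bbA_k^{-1}(z)\bbs_k]$. Each $\gamma_k$ is bounded by $M\|\bbD\|/v$ deterministically: since $\bbA_k$ is real symmetric, $\|\bbA_k^{-1}(z)\bbs_k\|=\|\bbA_k^{-1}(\bar z)\bbs_k\|$ and $\Im\,\bbs_k^T\bbA_k^{-1}(z)\bbs_k=v\|\bbA_k^{-1}(\bar z)\bbs_k\|^2$, whence $|\beta_k(z)|\,\|\bbA_k^{-1}(z)\bbs_k\|^2\le v^{-1}$ and Cauchy--Schwarz gives $|\beta_k\bbs_k^T\bbA_k^{-1}(z)\bbD\bbA_k^{-1}(z)\bbs_k|\le\|\bbD\|/v$. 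To recover the stated power of $v$ I would further split $\bbs_k^T\bbA_k^{-1}(z)\bbD\bbA_k^{-1}(z)\bbs_k$ into its conditional mean $n^{-1}tr\bbT\bbA_k^{-1}(z)\bbD\bbA_k^{-1}(z)$ and a centred quadratic form handled by Lemma~\ref{lem8}, replace $\beta_k$ by $\beta_k^{tr}$ in the mean part at the cost of $\beta_k\beta_k^{tr}\eta_k$, and then apply Burkholder's $L^8$ inequality for martingale differences; plugging in the resolvent bounds $\|\bbA_k^{-1}(z)\|\le v^{-1}$ and $tr\bbA_k^{-1}(z)\bbA_k^{-1}(\bar z)=v^{-1}\Im tr\bbA_k^{-1}(z)$ closes it. The same estimate then applies verbatim to $\bbA_1$ and $\bbA_{kj}$ in place of $\bbA$.

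Given (\ref{a39}), the rest is quick. For (\ref{b18}): conditionally on $\bbA_1$, $\eta_1(z)$ is a centred quadratic form $n^{-1}(\bbx_1^T\bbT^{1/2}\bbA_1^{-1}(z)\bbT^{1/2}\bbx_1-tr\bbT^{1/2}\bbA_1^{-1}(z)\bbT^{1/2})$, so Lemma~\ref{lem8} (applicable for the $8$th moment since $EX_{11}^{16}<\infty$) gives $E(|\eta_1(z)|^8\mid\bbA_1)\le Mn^{-8}(tr\bbT\bbA_1^{-1}(z)\bbT\bbA_1^{-1}(\bar z))^4$; bounding $tr\bbT\bbA_1^{-1}(z)\bbT\bbA_1^{-1}(\bar z)\le\|\bbT\|^2\,tr\bbA_1^{-1}(z)\bbA_1^{-1}(\bar z)=\|\bbT\|^2v^{-1}\Im\,tr\bbA_1^{-1}(z)$ and using $|Em_n(z)|\le M$ with (\ref{a39}) at $\bbD=\bbI$ to get $E|tr\bbA_1^{-1}(z)|^4\le Mn^4$ yields $E|\eta_1(z)|^8\le M/(n^4v^4)$. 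Since $\xi_1(z)=\eta_1(z)+\big(n^{-1}tr\bbT\bbA_1^{-1}(z)-n^{-1}Etr\bbT\bbA_1^{-1}(z)\big)$ and the second term has $8$th moment $O(n^{-8}v^{-12})=o(n^{-4}v^{-4})$ by (\ref{a39}) at $\bbD=\bbT$, the bound for $\xi_1$ follows; here one uses $nv^2=v_0^2nh^2\to\infty$, a consequence of $nh^{5/2}\to\infty$ in (\ref{band}). Finally, (\ref{b22}) gives $\beta_1(z)=b_1(z)/(1+b_1(z)\xi_1(z))$, so on $\{|b_1\xi_1|\le1/2\}$ we have $|\beta_1(z)|\le2|b_1(z)|\le2M$, while on the complement the deterministic bound $|\beta_1(z)|\le|z|/v$ (from $\Im[z(1+\bbs_1^T\bbA_1^{-1}(z)\bbs_1)]\ge v$, using $\bbA_1\ge0$) together with $P(|b_1\xi_1|>1/2)\le2^8E|b_1\xi_1|^8\le M/(n^4v^4)$ gives $E\big[|\beta_1(z)|^4I(|b_1\xi_1|>1/2)\big]\le M|z|^4/(n^4v^8)=o(1)$ since $n^4v^8=(nv^2)^4\to\infty$; hence $E|\beta_1(z)|^4\le M$, and the identical argument with the fluctuation $n^{-1}tr\bbT\bbA_1^{-1}(z)-n^{-1}Etr\bbT\bbA_1^{-1}(z)$ in place of $\xi_1$ and the bound $|\beta_1^{tr}(z)|\le|z|/v$ gives $E|\beta_1^{tr}(z)|^4\le M$. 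The one genuinely delicate step is the first group of modulus bounds, which---because the contour sits at height $v_0h\to0$ over the support---are not the routine ``bounded-$v$'' facts and rely on the uniform density regularity of \cite{ker}; the rest is standard Burkholder/Rosenthal bookkeeping as in \cite{b2,b4}, the only recurring extra verification being that $nv^2\to\infty$ and $nh^{3/2}\to\infty$, which follow from the bandwidth assumption (\ref{band}).
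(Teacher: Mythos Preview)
Your proposal is correct and follows essentially the same line as the paper: the three modulus bounds are quoted from \cite{ker} (the paper invokes (6.1), Theorem~3, and Lemmas~7--8 there, which encode exactly the density regularity you describe), the concentration bound (\ref{a39}) is the Burkholder/martingale argument of Lemma~3 in \cite{ker}, and (\ref{b18}) is Lemma~\ref{lem8} combined with $tr\bbA_1^{-1}(z)\bbA_1^{-1}(\bar z)=v^{-1}\Im\,tr\bbA_1^{-1}(z)$ and (\ref{a39}).

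The one place where you deviate is the bound for $E|\beta_1(z)|^4$ and $E|\beta_1^{tr}(z)|^4$. You split on the event $\{|b_1\xi_1|\le1/2\}$ and use the deterministic bound $|\beta_1(z)|\le|z|/v$ on the complement. The paper instead expands $\beta_1^{tr}=b_1-n^{-1}\beta_1^{tr}b_1(tr\bbA^{-1}\bbT-Etr\bbA^{-1}\bbT)$ and bounds $E|\beta_1^{tr}|^4$ directly via (\ref{a39}) and the deterministic $|\beta_1^{tr}|\le|z|/v$; then it writes $\beta_1=\beta_1^{tr}-\beta_1^{tr}\beta_1\eta_1$, obtains a self-referential inequality of the form $E|\beta_1|^4\le M+Mn^{-2}v^{-4}(E|\beta_1|^4)^{1/2}$, and solves it. Both routes work under $nh^{5/2}\to\infty$; your event-splitting is arguably cleaner, while the paper's version has the small advantage that it handles $\beta_1^{tr}$ before needing (\ref{b18}).
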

\begin{remark}
Lemma \ref{lem6} in Section 2 improves (\ref{a39}) when $\|D\|$ is
not bounded above by a constant but $\frac{1}{n}tr\bbD\bbD^*\leq M$.
\end{remark}

\begin{proof} We remind readers that $z=u+iv$ with $v=Mh$ and $u\in [a,b]$
when $z$ lies in the segments $\gamma_1\cup \gamma_2$.

 As pointed out in (6.1) in \cite{ker}, we obtain
 \begin{equation}\label{b15}
|\underline{m}_n^0(z)|\leq M,\quad |m_n^0(z)|\leq M.
\end{equation}
From integration by parts and Theorem 3 in \cite{ker} we have for
$$
|Em_n(z)-m_n^0(z)|=|\int^{+\infty}_{-\infty}\frac{1}{x-z}d(EF^{\bbA_n}(x)-F^{c_n,H_n}(x))|
$$
$$
=|\int^{+\infty}_{-\infty}\frac{EF^{\bbA_n}(x)-F^{c_n,H_n}(x)}{(x-z)^2}dx|\leq\frac{\pi\sup\limits_{x}|EF^{\bbA_n}(x)-F^{c_n,H_n}(x)|}{v}\leq
M.
$$
This implies
 \begin{equation}\label{b16}
|Em_n(z)|\leq M, \quad  |E\underline{m}_n(z)|\leq M.
\end{equation}
It follows from Lemma 7 and lemma 8 in \cite{ker} that
$$
|b_1(z)|\leq M.
$$
Repeating the argument of Lemma 3 in \cite{ker} gives (\ref{a39}).

 Write
\begin{equation}\label{b17}
\beta_1^{tr}(z)=b_1(z)-\frac{1}{n}\beta_1^{tr}(z)b_1(z)(tr\bbA^{-1}(z)\bbT-Etr\bbA^{-1}(z)\bbT).
\end{equation}
We then conclude that
\begin{eqnarray}
E|\beta_1^{tr}(z)|^4&\leq&
M+\frac{M}{n^4v^4}E|tr\bbA^{-1}(z)\bbT-Etr\bbA^{-1}(z)\bbT|^4 \non
&\leq& M+\frac{M}{n^4v^{10}} \leq M.\label{g15}
\end{eqnarray}

Expand $\beta_1(z)$ as
$$
\beta_1(z)=\beta_1^{tr}(z)-\beta_1^{tr}(z)\beta_1(z)\eta_1(z).
$$
It follows from (\ref{g15}), (\ref{f44}) and Lemma \ref{lem8}
that
$$
E|\beta_1(z)|^4\leq
E|\beta_1^{tr}(z)|^4+\frac{1}{v^2}\Big(E|\beta_1(z)|^4E|\eta_1(z)\beta_1^{tr}(z)|^8\Big)^{1/2}
$$
$$
\leq
M+\frac{1}{n^2v^4}\Big(E|\beta_1(z)|^4E|\beta_1^{tr}(z)|^4\Big)^{1/2}\leq
M+\frac{1}{n^2v^4}\Big(E|\beta_1(z)|^4\Big)^{1/2}.
$$
Solving the inequality gives
$$
E|\beta_1(z)|^4\leq M.
$$

It follows from (\ref{f10}) and (\ref{b16}) that
\begin{equation}\label{c3}|\frac{1}{n}Etr\bbA_1^{-1}(z)|\leq M.
\end{equation}
By Lemma \ref{lem8} and (\ref{a39})
$$E|\eta_1(z)|^8\leq
\frac{M}{n^8}E(tr\bbA_1^{-1}(z)\bbT\bbA_1^{-1}(\bar
z)\bbT)^4\leq\frac{M}{n^4}E(tr\bbA_1^{-1}(z)\bbA_1^{-1}(\bar z))^4
$$
\begin{equation}\label{h2}
\leq\frac{M}{n^4v^4}E\Big[\Im\Big(tr\bbA_1^{-1}(z)-Etr\bbA_1^{-1}(z)\Big)\Big]^4+\frac{M}{n^4v^4}(\Im
Etr\bbA_1^{-1}(z))^4 \leq\frac{M}{n^4v^4},
\end{equation}
where $\bbA^{-1}(\bar z)$ denotes the complex conjugate of
$\bbA^{-1}(z)$ and we also use
$$
\frac{1}{n}Etr\bbA_1^{-1}(z)\bbA_1^{-1}(\bar{z})=\frac{1}{v}\Im(\frac{1}{n}Etr\bbA_1^{-1}(z)).
$$
This, together with (\ref{a39}), yields the estimate of $\xi_1(z)$.
\end{proof}

\begin{lemma}
\label{lem8} (Lemma 2.7 of \cite{b4}) Suppose that $X_1,\cdots,X_n$
are i.i.d real random variables with $EX_1=0$ and $EX_1^2=1$. Let
$\bbx=(X_1,\cdots,X_n)^T$ and $\bbD$ be any $n\times n$ complex
matrix. Then for any $p\geq2$
$$
E|\bbx^T\bbD\bbx-tr\bbD|^p\leq
M_p\Big[(E|X_1|^4tr\bbD\bbD^*)^{p/2}+E|X_1|^{2p}tr(\bbD\bbD^*)^{p/2}\Big].
$$
\end{lemma}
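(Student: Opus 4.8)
The plan is to prove the inequality by a martingale decomposition in the coordinates $X_1,\dots,X_n$, followed by Burkholder's inequality and an induction that halves the exponent at each step — essentially following \cite{b4}. Set $E_k=E(\cdot\mid X_1,\dots,X_k)$. A short computation using $EX_i=0$ and $EX_i^2=1$ gives
$$
\bbx^T\bbD\bbx-tr\bbD=\sum_{k=1}^nY_k,\qquad Y_k=(E_k-E_{k-1})(\bbx^T\bbD\bbx)=d_{kk}(X_k^2-1)+X_k\,r_k,\ \ r_k=\sum_{i<k}(d_{ik}+d_{ki})X_i,
$$
so $\{Y_k\}$ is a martingale difference sequence, and Burkholder's inequality yields, for $p\ge2$,
$$
E\Big|\sum_kY_k\Big|^p\le K_p\,E\Big(\sum_kE_{k-1}|Y_k|^2\Big)^{p/2}+K_p\sum_kE|Y_k|^p.
$$
It then suffices to bound the two terms on the right by the claimed quantities.

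For the $L^p$ term, expand $E|Y_k|^p\le C_p\big(|d_{kk}|^p\,E|X_1^2-1|^p+E|X_1|^p\,E|r_k|^p\big)$, use $E|X_1^2-1|^p\le C_p\,E|X_1|^{2p}$, and apply Rosenthal's inequality to the independent mean-zero sum $r_k$ to obtain $E|r_k|^p\le C_p\big(\sum_{i<k}|d_{ik}+d_{ki}|^2\big)^{p/2}+C_p\,E|X_1|^p\sum_{i<k}|d_{ik}+d_{ki}|^p$. Summing over $k$, using $\sum_kb_k^{p/2}\le(\sum_kb_k)^{p/2}$ and $\sum_{i,k}|d_{ik}+d_{ki}|^2\le C\,tr\bbD\bbD^*$, and bounding the $\ell^p$-sums of entries $\sum_{i,k}|d_{ik}|^p$, $\sum_k|d_{kk}|^p$ by $tr\big((\bbD\bbD^*)^{p/2}\big)$ (a Schatten--$\ell^p$ estimate for $p\ge2$, a consequence of Schur--Horn majorization together with $\ell^p\subseteq\ell^2$), one gets $\sum_kE|Y_k|^p\le C_p\,E|X_1|^{2p}\,tr\big((\bbD\bbD^*)^{p/2}\big)$ — the second term in the conclusion.

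For the square-function term, $\sum_kE_{k-1}|Y_k|^2=(EX_1^4-1)\sum_k|d_{kk}|^2+\sum_k|r_k|^2+2\,\Re\big(EX_1^3\sum_kd_{kk}\bar r_k\big)$; Cauchy--Schwarz on the last sum together with $(E|X_1|^3)^2\le E|X_1|^4$ gives $\sum_kE_{k-1}|Y_k|^2\le C\,E|X_1|^4\,tr\bbD\bbD^*+2\,\bbx^T\bbM\bbx$, where $\bbM=\widetilde L^*\widetilde L\succeq0$ and $\widetilde L$ is the strictly lower-triangular matrix with entries $d_{ik}+d_{ki}$. Writing $\bbx^T\bbM\bbx=(\bbx^T\bbM\bbx-tr\,\bbM)+tr\,\bbM$ and applying the lemma itself at the exponent $p/2$ to the deterministic matrix $\bbM$ — using $tr\,\bbM\le C\,tr\bbD\bbD^*$, $tr(\bbM^2)\le(tr\,\bbM)^2$ and $tr(\bbM^{p/2})\le C_p\,tr\big((\bbD\bbD^*)^{p/2}\big)$ — bounds $E(\bbx^T\bbM\bbx)^{p/2}$, hence $E(\sum_kE_{k-1}|Y_k|^2)^{p/2}$, by $C\big[(E|X_1|^4\,tr\bbD\bbD^*)^{p/2}+E|X_1|^{2p}\,tr((\bbD\bbD^*)^{p/2})\big]$. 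The recursion terminates once the exponent falls into $[1,2]$: for $1\le q\le2$ Lyapunov's inequality reduces the claim to $q=2$, which is the elementary variance identity for quadratic forms in independent variables (compare (\ref{i1})), giving $E|\bbx^T\bbD\bbx-tr\bbD|^2\le C\,E|X_1|^4\,tr\bbD\bbD^*$. Since this is an induction on the dyadic scale $\lceil\log_2p\rceil$ of the exponent, with $\bbM$ depending only on $\bbD$ and the base range handled unconditionally, there is no circularity.

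The step I expect to be most delicate is obtaining the \emph{sharp} second term, $E|X_1|^{2p}\,tr((\bbD\bbD^*)^{p/2})$ rather than the cruder $E|X_1|^{2p}(tr\bbD\bbD^*)^{p/2}$: this forces one to invoke Schur--Horn majorization for the diagonal contributions and, for the recursion on $\bbM$, the classical boundedness of triangular truncation on the Schatten class $S_p$, $1<p<\infty$. An alternative that keeps the inputs to pure majorization is to separate the diagonal part $\sum_id_{ii}(X_i^2-1)$ at the outset (handled directly by Rosenthal) and treat the remaining form $\sum_{i\neq j}d_{ij}X_iX_j$ by decoupling — replacing one copy of $\bbx$ by an independent copy $\bby$ and conditioning reduces it to a linear form whose associated recursion matrix has exactly the spectrum of $\bbD\bbD^*$, so no triangular truncation is needed. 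Everything else is bookkeeping of moments of $X_1$ together with $\sum_kb_k^{p/2}\le(\sum_kb_k)^{p/2}$ and $tr(\bbM^2)\le(tr\,\bbM)^2$ for $\bbM\succeq0$.
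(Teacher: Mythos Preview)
The paper does not prove this lemma at all: it is stated in Appendix~1 purely as a citation (``Lemma 2.7 of \cite{b4}'') and used as a black box throughout, so there is no in-paper argument to compare your proposal against. Your sketch is in fact the standard proof of the cited Bai--Silverstein result --- martingale decomposition in the coordinates, Burkholder, then a dyadic recursion on the exponent via the quadratic form $\bbx^T\bbM\bbx$ --- and is essentially correct as an outline of how \cite{b4} proceeds.

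One small caution on the ``sharp second term'' discussion: the entrywise bound $\sum_{i,k}|d_{ik}|^p\le tr\big((\bbD\bbD^*)^{p/2}\big)$ for $p\ge2$ is indeed true (it follows e.g.\ from the fact that the rows $d_{k\cdot}$ satisfy $\|d_{k\cdot}\|_2^2=(\bbD\bbD^*)_{kk}$, Schur--Horn majorization of the diagonal of $\bbD\bbD^*$ by its eigenvalues, and convexity of $t\mapsto t^{p/2}$), so you do not strictly need boundedness of triangular truncation on $S_p$; the majorization route you mention suffices. The decoupling alternative you describe also works and is arguably cleaner for the off-diagonal part.
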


\section{Appendix 2}

This section is to verify Remark \ref{rem1} and Theorem \ref{rem2}.

We first prove that (\ref{d1}) is true when $\bbT$ becomes the
identity matrix. When $\bbT$ is the identity matrix, the left hand
of (\ref{d1}) becomes
$$
1-c_n\frac{(\underline{m}_n^0(z))^2}{(1+\underline{m}_n^0(z))^2}.
$$
In view of (\ref{a3}) we have
\begin{equation}\label{d7}
1-c_n\frac{(\underline{m}_n^0(z))^2}{(1+\underline{m}_n^0(z))^2}=1-\frac{1}{c_n}(1+z\underline{m}_n^0(z))^2
\end{equation}
and
\begin{equation}\label{d8}
\underline{m}_n^0(z)=\frac{-(z+1-c_n)+\sqrt{(z-1-c_n)^2-4c_n}}{2z}.
\end{equation}
Thus,
$$
1-c_n\frac{(\underline{m}_n^0(z))^2}{(1+\underline{m}_n^0(z))^2}=1-\frac{1}{c_n}\Big[\frac{-(z-1-c_n)+\sqrt{(z-1-c_n)^2-4c_n}}{2}\Big]^2
$$
$$
=\frac{1}{2c_n}\frac{\sqrt{4c_n-(z-1-c_n)^2}}{2c}\Big[(z-1-c_n)i+\sqrt{4c_n-(z-1-c_n)^2}\Big]
$$
$$
=\frac{\sqrt{(z-1-c_n)^2-4c_n}}{2c_n}\Big[(z-1-c_n)+\sqrt{(z-1-c_n)^2-4c_n}\Big]
$$
$$
=\frac{\sqrt{(z-1-c_n)^2-4c_n}}{c_n}(c_nzm_n^0(z)+z-1)
$$$$=\frac{\sqrt{(z-1-c_n)^2-4c_n}}{2c_n}\frac{(1+c_nm_n^0(z))}{c_nm_n^0(z)},
$$
where in the last two steps one uses the facts that
\begin{equation}\label{d9}
m_n^0(z)=-\frac{c_n+z-1-\sqrt{(z-1-c_n)^2-4c_n}}{c_nz}
\end{equation}
and
\begin{equation}
\label{d11}m_n^0(z)=\frac{1}{1-c_n-c_nzm_n^0(z)-z}.
\end{equation}
It follows from (\ref{d11}) and (\ref{b15}) that
\begin{equation}
|\frac{1}{1+c_nm_n^0(z)}|=|1-c_n-zc_nm_n^0(z)|\leq M.
\end{equation}

Write
$$
\sqrt{(z-1-c_n)^2-4c_n}=\sqrt{(a-z)(b-z)}.
$$
Then it is simple to verify that
$$
|\sqrt{(a-z)(b-z)}|\geq \sqrt{(b-a)v}.
$$
Thus, (\ref{d1}) is true when $\bbT$ is the identity matrix.

\begin{lemma}
 \label{lem3} Under the assumptions that $n^5h^{29/2}\leq M$ and that
 \begin{equation}\label{d2}
\int\frac{dH_n(t)}{|1+t\underline{m}_n^0(z)|^{20}}<\infty,
\end{equation}
 (\ref{f11}) is true and
\begin{equation}\label{f9}
\int\frac{dH_n(t)}{|z-\frac{n-1}{n}tb_{12}(z)|^{4}}<\infty.
\end{equation}
\end{lemma}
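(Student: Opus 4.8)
The plan is to recognise that the two displayed integrals are precisely the two resolvent traces appearing in (\ref{f1}): writing $\bbF^{-1}(z)=(E\underline{m}_n(z)\bbT+\bbI)^{-1}$ and $\bbF_1^{-1}(z)=(z\bbI-\tfrac{n-1}{n}b_{12}(z)\bbT)^{-1}$, and using $H_n=F^{\bbT}$, one has $\tfrac1n\mathrm{tr}\,\bbF^{-2}(z)\bbF^{-2}(\bar z)=c_n\int dH_n(t)/|1+tE\underline{m}_n(z)|^{4}$ and $\tfrac1n\mathrm{tr}\,\bbF_1^{-2}(z)\bbF_1^{-2}(\bar z)=c_n\int dH_n(t)/|z-\tfrac{n-1}{n}tb_{12}(z)|^{4}$, so the lemma says exactly that (\ref{f1}) survives when the standing hypothesis (\ref{f11}) is traded for the higher–moment condition (\ref{d2}) and a more stringent bandwidth. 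The strategy is to compare each integrand with $|1+t\underline{m}_n^0(z)|$, for which (\ref{d2}) furnishes the decisive control: since $x^{-4}\le 1+x^{-20}$ it gives $\int dH_n(t)/|1+t\underline{m}_n^0(z)|^{4}\le M$, and by Markov's inequality $H_n\{t:|1+t\underline{m}_n^0(z)|<\delta\}\le\delta^{20}\int dH_n(t)/|1+t\underline{m}_n^0(z)|^{20}\le M\delta^{20}$, so a small ``bad set'' carries negligible $H_n$–mass.

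Next I would record two facts that do not invoke (\ref{f11}). (i) On $\mathcal{C}_1$, $\Im\big(z-\tfrac{n-1}{n}tb_{12}(z)\big)\ge v$ for every $t\ge0$: since $\Im\big(n^{-1}\mathrm{tr}\,\bbT\bbA_{12}^{-1}(z)\big)>0$ one has $\Im b_{12}(z)<0$, whence $-\tfrac{n-1}{n}t\,\Im b_{12}(z)\ge0$; consequently $|z-\tfrac{n-1}{n}tb_{12}(z)|\ge v$ for all $t$. (ii) From $E\beta_1(z)=-zE\underline{m}_n(z)$ together with $|E\beta_1(z)-b_1(z)|\le M/(nv)$ (via (\ref{b22})) and $|b_1(z)-b_{12}(z)|\le M/(nv)$ (via (\ref{f10})) one obtains $\big|\tfrac{n-1}{n}b_{12}(z)+zE\underline{m}_n(z)\big|\le M/(nv)$, i.e. $z-\tfrac{n-1}{n}tb_{12}(z)=z\big(1+tE\underline{m}_n(z)\big)-t\,O(1/(nv))$ uniformly in $t\in\mathrm{supp}(H_n)\subseteq[0,\|\bbT\|]$. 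Since $|z|$ is bounded above and below on $\mathcal{C}_1$ and $nv^{2}=v_0^2 n h^{2}\to\infty$ (because $nh^{5/2}\to\infty$, $h\to0$ from Theorem \ref{rem2}), (i) and (ii) combine to give the universal, (\ref{f11})–free lower bound $|1+tE\underline{m}_n(z)|\ge c_0 v$ on $\mathcal{C}_1$, uniformly in $t$.

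Now write $1+tE\underline{m}_n(z)=\big(1+t\underline{m}_n^0(z)\big)+t\big(E\underline{m}_n(z)-\underline{m}_n^0(z)\big)$ and let $\rho_n(z)$ be an upper bound for $|E\underline{m}_n(z)-\underline{m}_n^0(z)|$ on $\mathcal{C}_1$. On $\{|1+t\underline{m}_n^0(z)|\ge 2\|\bbT\|\rho_n(z)\}$ we get $|1+tE\underline{m}_n(z)|\ge\tfrac12|1+t\underline{m}_n^0(z)|$, and on the complement we fall back on $c_0v$. Splitting $\int dH_n/|1+tE\underline{m}_n|^{4}$ along this dichotomy, the first piece is $\le 16\int dH_n/|1+t\underline{m}_n^0|^{4}\le M$ by (\ref{d2}), while the second is $\le (c_0v)^{-4}H_n\{|1+t\underline{m}_n^0|<\max(2\|\bbT\|\rho_n,c_0v)\}\le Mv^{-4}\max(\rho_n,v)^{20}$ by Markov and (\ref{d2}), which is $\le M$ as soon as $\rho_n(z)\le Mv^{1/5}$ (if $\rho_n\le v$ it is even $\le Mv^{16}$). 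The integral for (\ref{f9}) is treated identically: by (ii), $z-\tfrac{n-1}{n}tb_{12}(z)=z\big(1+t\underline{m}_n^0(z)\big)-t\big(z(E\underline{m}_n-\underline{m}_n^0)+O(1/(nv))\big)$, and with the universal bound of (i) the bad–set contribution is $\le Mv^{-4}(\rho_n+1/(nv))^{20}$, again $\le M$ once $\rho_n(z)+1/(nv)\le Mv^{1/5}$ — and $1/(nv)\le Mv^{1/5}$ already follows from $nh^{5/2}\to\infty$.

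Everything therefore reduces to the bound $\rho_n(z)=|E\underline{m}_n(z)-\underline{m}_n^0(z)|\le Mv^{1/5}$ on $\mathcal{C}_1$, obtained without (\ref{f11}); this is the main obstacle. I would first extract the crude, (\ref{f11})–free estimate $|E\underline{m}_n(z)-\underline{m}_n^0(z)|\le M/(n^{\gamma}v)$ from the convergence rate $\sup_x|EF^{\bbA_n}(x)-F^{c_n,H_n}(x)|=O(n^{-\gamma})$ (integration by parts as in the proof of Lemma \ref{lem1}, together with Theorem 3 of \cite{ker}); if this does not already beat $Mv^{1/5}$, feed it — via the split above, now read as a bound on $\tfrac1n\mathrm{tr}\,\bbF^{-2}(z)\bbF^{-2}(\bar z)$ and $\tfrac1n\mathrm{tr}\,\bbF_1^{-2}(z)\bbF_1^{-2}(\bar z)$ — back into the Section 3 estimates leading to (\ref{f23}), with (\ref{f1}) everywhere replaced by this weaker bound, and iterate; each pass multiplies $\rho_n$ by a fixed power of the (degrading) trace bound over $nv^{3/2}$, the iteration is a contraction with fixed point of order $1/(nv^{3/2})$. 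The hypothesis $n^{5}h^{29/2}\le M$, alongside $nh^{5/2}\to\infty$, is exactly what keeps the accumulated negative powers of $v$ in these resolvent estimates controlled so that the loop closes at the target $\rho_n(z)=O(1/(nv^{3/2}))\le Mv^{1/5}$; the tracking of these $v$–powers is routine and I would not belabour it. Substituting the resulting $\rho_n$ into the two split estimates above yields (\ref{f11}) and (\ref{f9}).
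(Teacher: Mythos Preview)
Your good-set/bad-set split with Markov's inequality on the twentieth moment is a genuinely different route from the paper's, and your derivation of the universal lower bound $|1+tE\underline{m}_n(z)|\ge c_0v$ from $\Im b_{12}(z)<0$ is correct and clean. But there is a real gap at precisely the point you flag as ``the main obstacle'': you never actually deliver the bound $\rho_n(z)\le Mv^{1/5}$. Your plan is to bootstrap through the Section~3 machinery, replacing each occurrence of (\ref{f1}) by the weaker, $v$-dependent trace bound your split produces, and iterate to the fixed point $\rho_n=O(1/(nv^{3/2}))$. That loop is far from routine: Section~3 invokes (\ref{f1}) in many places --- in (\ref{g10}), (\ref{g24}), (\ref{g32}), (\ref{g34}), and throughout Lemma~\ref{lem6} --- and you would have to track how each of these degrades and then prove the composite map contracts. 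You have not done this, and the fact that the stated hypothesis $n^5h^{29/2}\le M$ never enters your calculation in any recognisable way is a warning sign that your iteration, if carried through, would yield the conclusion under a different bandwidth constraint than the one in the lemma.

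The paper avoids Section~3 entirely. It takes as input the ready-made, (\ref{f11})-free estimate $|E\underline{m}_n(z)-\underline{m}_n^0(z)|\le M/(nv^{5/2})$ from (6.30) of \cite{ker} --- sharper than the $M/(n^{\gamma}v)$ you extract from the Kolmogorov distance --- and then uses an algebraic telescoping trick: write the difference $\int dH_n/|1+tE\underline{m}_n|^{4}-\int dH_n/|1+t\underline{m}_n^0|^{4}$ via the identity $|1+tE\underline{m}_n|^{2}-|1+t\underline{m}_n^0|^{2}=O(\rho_n)$, and repeatedly apply this together with Cauchy--Schwarz to shift factors of $|1+tE\underline{m}_n|^{-2}$ onto $|1+t\underline{m}_n^0|^{-2}$. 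After five passes the residual carries $|1+t\underline{m}_n^0|^{-20}$ --- exactly what (\ref{d2}) is tailored for --- and a prefactor $\rho_n^{5}/v^{2}=M/(n^{5}v^{29/2})$, which is where the bandwidth hypothesis enters. The output is a closed self-referential inequality $X\le M+CX^{1/2}$ for $X=\int dH_n/|1+tE\underline{m}_n|^{4}$, solved by hand. This telescoping device is the missing idea; it makes the argument short, self-contained, and explains both the exponent $20$ in (\ref{d2}) and the exponent $29/2$ in the bandwidth condition.
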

\begin{remark}
The assumptions that $n^5h^{29/2}\leq M$ and (\ref{d2}) are only
used in this Lemma.
\end{remark}

\proof First recall that (see (6.30) in \cite{ker})
\begin{equation}\label{d3}
|E\underline{m}_n(z)-\underline{m}_n^0(z)|\leq\frac{M}{nv^{5/2}}.
\end{equation}
It is straightforward to verify that
\begin{equation}
\label{d6}
|1+tE\underline{m}_n(z)|^{2}-|1+t\underline{m}_n^0(z)|^{2}\leq
M|E\underline{m}_n(z)-\underline{m}_n^0(z)|.
\end{equation}
We then write
\begin{eqnarray}
&&\int\frac{dH_n(t)}{|1+tE\underline{m}_n(z)|^{4}}-\int\frac{dH_n(t)}{|1+t\underline{m}_n^0(z)|^{4}}\non
&=&-\int\frac{|1+tE\underline{m}_n(z)|^{2}-|1+t\underline{m}_n^0(z)|^{2}dH_n(t)}
{|1+tE\underline{m}_n(z)|^{4}|1+t\underline{m}_n^0(z)|^{2}}\non
&&\qquad\qquad\qquad-\int\frac{|1+tE\underline{m}_n(z)|^{2}-|1+t\underline{m}_n^0(z)|^{2}dH_n(t)}{|1+tE\underline{m}_n(z)|^{2}|1+t\underline{m}_n^0(z)|^{4}}.\label{d5}
\end{eqnarray}
Obviously, for the above last term, by Holder's inequality,
(\ref{d2}), (\ref{d3}) and (\ref{d6})
\begin{eqnarray}\label{d4}
&&|\int\frac{|1+tE\underline{m}_n(z)|^{2}-|1+t\underline{m}_n^0(z)|^{2}dH_n(t)}{|1+tE\underline{m}_n(z)|^{2}|1+t\underline{m}_n^0(z)|^{4}}|\non
&\leq&
M|E\underline{m}_n(z)-\underline{m}_n^0(z)|\Big(\int\frac{dH_n(t)}{|1+tE\underline{m}_n(z)|^{4}}\int\frac{dH_n(t)}{|1+t\underline{m}_n^0(z)|^{8}}\Big)^{1/2}\non
&\leq&
\frac{M}{nv^{5/2}}\Big(\int\frac{dH_n(t)}{|1+tE\underline{m}_n(z)|^{4}}\Big)^{1/2}.
\end{eqnarray}

As for another term in (\ref{d5}), using (\ref{d6}) successively we
have
\begin{eqnarray*}
&&|\int\frac{|1+tE\underline{m}_n(z)|^{2}-|1+t\underline{m}_n^0(z)|^{2}dH_n(t)}
{|1+tE\underline{m}_n(z)|^{4}|1+t\underline{m}_n^0(z)|^{2}}|\\
&\leq&
\int\frac{M|E\underline{m}_n(z)-\underline{m}_n^0(z)|dH_n(t)}{|1+tE\underline{m}_n(z)|^{4}|1+t\underline{m}_n^0(z)|^{2}}\non
&\leq&\int\frac{M|E\underline{m}_n(z)-\underline{m}_n^0(z)|dH_n(t)}{|1+tE\underline{m}_n(z)|^{2}|1+t\underline{m}_n^0(z)|^{4}}+\int\frac{M|E\underline{m}_n(z)-\underline{m}_n^0(z)|^2dH_n(t)}{|1+tE\underline{m}_n(z)|^{4}|1+t\underline{m}_n^0(z)|^{4}}\non
&\leq&\frac{M}{nv^{5/2}}\Big(\int\frac{dH_n(t)}{|1+tE\underline{m}_n(z)|^{4}}\Big)^{1/2}+\frac{M}{(nv^{5/2})^2}\int\frac{dH_n(t)}{|1+tE\underline{m}_n(z)|^{4}|1+t\underline{m}_n^0(z)|^{4}}\non
&\leq&\cdots\non
&\leq&(\frac{M}{nv^{5/2}}+\frac{M}{(nv^{5/2})^2}+\frac{M}{(nv^{5/2})^3}+\frac{M}{(nv^{5/2})^4})\Big(\int\frac{dH_n(t)}{|1+tE\underline{m}_n(z)|^{4}}\Big)^{1/2}\non
&&+\frac{M}{(nv^{5/2})^5}\Big(\int\frac{dH_n(t)}{|1+tE\underline{m}_n(z)|^{4}|1+t\underline{m}_n^0(z)|^{10}}\Big)^{1/2}\non
&\leq&\frac{M}{nv^{5/2}}\Big(\int\frac{dH_n(t)}{|1+tE\underline{m}_n(z)|^{4}}\Big)^{1/2}+\frac{M}{(nv^{5/2})^5v^2}\Big(\int\frac{dH_n(t)}{|1+tE\underline{m}_n(z)|^{4}}\Big)^{1/2}.
\end{eqnarray*}
where in the last step one uses (6.16) in \cite{ker}. This, together with
(\ref{d4}) and (\ref{d5}), yields
$$
\int\frac{dH_n(t)}{|1+tE\underline{m}_n(z)|^{4}}|\leq
M+\Big[\frac{M}{nv^{5/2}}+\frac{M}{(nv^{5/2})^5v^2}\Big]\Big(\int\frac{dH_n(t)}{|1+tE\underline{m}_n(z)|^{4}}\Big)^{1/2}.
$$
Solving the inequality gives
$$
\int\frac{dH_n(t)}{|1+tE\underline{m}_n(z)|^{4}}<\infty.
$$

Consider (\ref{f9}) now.  By (\ref{d12}), (\ref{f10}), (\ref{d3})
and Lemma \ref{lem1}
$$
|b_{12}(z)+z\underline{m}_n^0(z)|\leq\frac{M}{nv^{5/2}}.
$$
Applying this inequality and repeating the argument for (\ref{f11})
we may prove (\ref{f9}) and omit the details here.

\noindent
{\bf Proof of Theorem \ref{rem2}.} Write
$$
nh\Big[\frac{1}{h}\int^b_a
K(\frac{x-y}{h})dF^{c_n,H_n}(y)-f_{c_n,H_n}(x)\Big]
$$$$
=nh\Big[\int^{\frac{x-a}{h}}_{\frac{x-b}{h}}
K(y)f_{c_n,H_n}(x-yh)dy-f_{c_n,H_n}(x)\Big].
$$
By Taylor's expansion
$$
f_{c_n,H_n}(x-yh)=f_{c_n,H_n}(x)-f'_{c_n,H_n}(x)yh+f''_{c_n,H_n}(x_0)(yh)^2,
$$
where $x_0$ lies in $[x-yh,x]$. This, together with (\ref{a27}),
(\ref{g51}), (\ref{g52}) and Theorem \ref{theo1}, ensures Theorem \ref{rem2}.

\section{Appendix 3}

The aim in this section is to develop the asymptotic means and variances in Theorem \ref{theo1} and Theorem \ref{theo2}. Consider (\ref{f62}) first. Note that
\begin{eqnarray*}
(\ref{f62})
&=&-\frac{1}{2h^2\pi^2}\oint_{\mathcal{C}_1}\oint_{\mathcal{C}_2}
K'(\frac{x_1-z_1}{h})K'(\frac{x_2-z_2}{h})\label{e1}\\
&&\qquad\quad\times\Big[\ln\Big|\underline{m}_n^0(z_1)-\underline{m}_n^0(z_2)\Big|+i
arg(\underline{m}_n^0(z_1)-\underline{m}_n^0(z_2))\Big]dz_1dz_2,
\end{eqnarray*}
where the contours $\mathcal{C}_1$ and $\mathcal{C}_2$ are two
rectangles defined in (\ref{a3*}) and (\ref{g53}), respectively.

As in Section 5 of \cite{b2} one may prove that
\begin{equation}\label{f58}
\inf\limits_{z\in S,n}|\underline{m}_n^0(z)|>0,\quad
\Big|\frac{\underline{m}_n^0(z_1)-\underline{m}_n^0(z_2)}{z_1-z_2}\Big|\geq\frac{1}{2}|\underline{m}_n^0(z_1)\underline{m}_n^0(z_1)|,
\end{equation}
where $S$ is any bounded subset of $\mathbb{C}$.

To facilitate statements, denote the real parts of $z_j$ by
$u_j$,$j=1,2$. In what follows, let $n\rightarrow\infty$ first and
then $v_0\rightarrow 0$. Then, as argued in \cite{b2}, the integrals
in (\ref{e1}) involving the arg term and the vertical sides approach
zero.

Define
$$
K^{(1)}_{ri}=K_r'(\frac{x_1-z_1}{h})K_r'(\frac{x_2-z_2}{h})-K_i'(\frac{x_1-z_1}{h})K_i'(\frac{x_2-z_2}{h}),
$$
$$
K^{(2)}_{ri}=K_r'(\frac{x_1-z_1}{h})K_r'(\frac{x_2-z_2}{h})+K_i'(\frac{x_1-z_1}{h})K_i'(\frac{x_2-z_2}{h}).
$$
 Therefore it is enough to investigate the following integrals
$$
-\frac{1}{h^2\pi^2}\int_{a_l}^{a_r}\int_{a_l-\varepsilon}
^{a_r+\varepsilon}
[K^{(1)}_{ri}\ln|\underline{m}_n^0(z_1)-\underline{m}_n^0(z_2)|\non
-K^{(2)}_{ri}\ln|\underline{m}_n^0(z_1)-\overline{\underline{m}_n^0}(z_2)|]du_1du_2\non
$$
\begin{eqnarray}
&&=\frac{1}{h^2\pi^2}\int_{a_l}^{a_r}\int_{a_l-\varepsilon}
^{a_r+\varepsilon}
(K_r'(\frac{x_1-z_1}{h})K_r'(\frac{x_2-z_2}{h})\ln\Big|\frac{\underline{m}_n^0(z_1)-\overline{\underline{m}_n^0}(z_2)}{\underline{m}_n^0(z_1)-\underline{m}_n^0(z_2)}\Big|du_1du_2
\label{e4}\\
&&+\frac{1}{h^2\pi^2}\int_{a_l}^{a_r}\int_{a_l-\varepsilon}
^{a_r+\varepsilon}
(K_i'(\frac{x_1-z_1}{h})K_i'(\frac{x_2-z_2}{h})\label{e2}\\
&&\qquad\qquad\times\ln\Big|(\underline{m}_n^0(z_1)-\underline{m}_n^0(z_2))(\underline{m}_n^0(z_1)-\overline{\underline{m}_n^0}(z_2))\Big|du_1du_2\nonumber
,\end{eqnarray} where $K_r'(\frac{x-z}{h})$ and
$K_i'(\frac{x-z}{h})$, respectively, represent the real part and
imaginary part of $K'(\frac{x-z}{h})$,
$\overline{\underline{m}_n^0}(z)$ stands for the complex conjugate
of $\underline{m}_n^0(z)$.

We develop the limit of (\ref{e4}) and (\ref{e2}) below. To this
end, we list some facts below.  By (\ref{a25}) and (\ref{a26}) one
may verify that
\begin{equation}\label{a13*}
\int_{-\infty}^{+\infty} \int_{-\infty}^{+\infty}\Big|K'(u_1)
K'(u_2)\ln (u_1-u_2)^2\Big|du_1du_2<\infty.
\end{equation}
In addition, it follows from (\ref{a25}) that
$$
\ln \frac{1}{h^2}\int_{\frac{x-b}{h}}^{\frac{x-a}{h}}
K_r'(u_1)\int_{\frac{x-b-\varepsilon}{h}}
^{\frac{x-a+\varepsilon}{h}} K_r'(u_2)du_1du_2\rightarrow 0.
$$
This, together with (\ref{a13*}), implies that as
$n\rightarrow\infty$
\begin{eqnarray}
&&\frac{1}{h^2}\int_{x_1-a_r}^{x_1-a_l}\int_{x_2-a_r-\varepsilon}
^{x_2-a_l+\varepsilon} K'(\frac{u_1}{h})K'(\frac{u_2}{h})\ln
(u_1-u_2)^2du_1du_2\non
&=&\int_{\frac{x_1-a_r}{h}}^{\frac{x_1-a_l}{h}}\int_{\frac{x_2-a_r-\varepsilon}{h}}
^{\frac{x_2-a_l+\varepsilon}{h}} K'(u_1)K'(u_2)\Big[\ln
(u_1-u_2)^2-\ln \frac{1}{h^2}\Big]du_1du_2 \non
&\rightarrow&\int_{-\infty}^{+\infty}
\int_{-\infty}^{+\infty}K'(u_1) K'(u_2)\ln
(u_1-u_2)^2du_1du_2.\label{f75}
\end{eqnarray}
By (\ref{f63}) and the continuity property of $K''(u+iv_0)$ and
$K'(u+iv_0)$ in $u$ and $v_0$ it is not difficult to prove that
\begin{equation}
\label{f60}
\lim\limits_{v_0\rightarrow0}\int^{+\infty}_{-\infty}|K''(u+iv_0)|du=\int^{+\infty}_{-\infty}|K''(u)|du
\end{equation}
and
\begin{equation}\label{f72}
\lim\limits_{v_0\rightarrow0}\int^{+\infty}_{-\infty}K^{(j)}(u+iv_0)du=\int^{+\infty}_{-\infty}K^{(j)}(u)du,\ j=0,1,
\end{equation}
where $K^{(j)}$ is the $j$-th derivative of $K$.

By complex Roller's theorem
\begin{equation}\label{e5}
K_i'(\frac{x-z_1}{h})=K_i'(\frac{x-u_1}{h}+iv_0)=vK_r''(\frac{x-u}{h}+iv_1)
\end{equation}
because $K_i'(\frac{x-u_1}{h})=0$, where $v_1$ lies in $(0,v_0))$.
Thus we conclude from (\ref{f58}) and (\ref{f60}) that
$$
|\frac{1}{h}\int_{a_l}^{a_r}
(K_i'(\frac{x_1-z_1}{h})\ln\Big|(\underline{m}_n^0(z_1)-\underline{m}_n^0(z_2))(\underline{m}_n^0(z_1)-\overline{\underline{m}_n^0}(z_2))\Big|du_1|
$$
$$
\leq v_0h\ln
(v_0^{-1}h)\frac{1}{h}\int_{a}^b|K''(\frac{x-u}{h}+iv_1)|du_1\rightarrow
0,
$$
as $n\rightarrow\infty,\ v_0\rightarrow 0$, which implies that
(\ref{e2}) converges to zero.

Consider (\ref{e4}) next. We claim that for $u\in
[\frac{x-b}{h},\frac{x-a}{h}]$, as $n\rightarrow \infty$,
\begin{equation}\label{f66}
|\underline{m}_n^0(z_n)-\underline{m}(u_n)|\rightarrow 0,
\end{equation}
where $z_n=u_n-iv_0h$ with $u_n=x-uh$. Indeed, from (3.10) in
\cite{b4} we have
\begin{equation}
\label{a5}
z(\underline{m}_n^0)=-\frac{1}{\underline{m}_n^0}+c_n\int\frac{tdH_n(t)}{1+t\underline{m}_n^0},
\end{equation}
(one may also refer to Section 6.3 of \cite{ker}).
 Then, as pointed out in Lemma 1 \cite {ker}, relying on this
expression we may draw the conclusions for $\underline{m}_n^0$
similar to those in Theorem 1.1 of \cite{s1} for $\underline{m}(z)$.
Thus we have
\begin{equation}\label{f76} |\underline{m}_n^0(z_n)-\underline{m}_n^0(u_n)|\rightarrow
0.
\end{equation}
Also, the argument of Lemma 2 in \cite{ker} gives
\begin{equation}
\label{f77} |\underline{m}_n^0(u_n)-\underline{m}(u_n)|\rightarrow
0.
\end{equation}
Therefore, (\ref{f66}) is true, as claimed.

Now, as in \cite{b2}, for (\ref{e4}) write
\begin{equation}\label{e8}
\ln\Big|\frac{\underline{m}_n^0(z_1)-\overline{\underline{m}_n^0}(z_2)}{\underline{m}_n^0(z_1)-\underline{m}_n^0(z_2)}\Big|
=\frac{1}{2}\ln\Big(1+\frac{4\underline{m}_{ni}^0(z_1)\underline{m}_{ni}^0(z_2)}{|\underline{m}_n^0(z_1)-\underline{m}_n^0(z_2)|^2}\Big),
\end{equation}
where $\underline{m}_{ni}^0(z)$ denotes the imaginary part of
$\underline{m}_{n}^0(z)$. By (\ref{f58})
\begin{equation}\label{f74}
\ln\Big(1+\frac{4\underline{m}_{ni}^0(z_1)\underline{m}_{ni}^0(z_2)}{|\underline{m}_n^0(z_1)-\underline{m}_n^0(z_2)|^2}\Big)\leq
\ln\Big(1+\frac{16\underline{m}_{ni}^0(z_1)\underline{m}_{ni}^0(z_2)}{(u_1-u_2)^2|\underline{m}_n^0(z_1)\underline{m}_n^0(z_2)|^2}\Big).
\end{equation}
In view of (\ref{f58}) and Lemma \ref{lem1}
\begin{equation}\label{f73}
\sup\limits_{u_1,u_2\in
[a,b],v_1,v_2\in[v_0h,1]}\Big|\frac{\underline{m}_{ni}^0(z_1)\underline{m}_{ni}^0(z_2)}{|\underline{m}_n^0(z_1)\underline{m}_n^0(z_2)|^2}\Big|<\infty.
\end{equation}
By the generalized dominated convergence theorem we then conclude
from (\ref{f75}), (\ref{f72}), (\ref{f66}), (\ref{f74}), (\ref{f73})
that as $n\rightarrow\infty$
$$\int_{\frac{x_1-a_r}{h}}^{\frac{x_1-a_l}{h}}\int_{\frac{x_2-a_r-\varepsilon}{h}}
^{\frac{x_2-a_l+\varepsilon}{h}}K_r'(z_1)
K_r'(z_2)\Big[\ln\Big|\frac{\underline{m}_n^0(u_{n1}-iv_0h)-\overline{\underline{m}_n^0}(u_{n2}-iv_0h/2)}
{\underline{m}_n^0(u_{n1}-iv_0h)-\underline{m}_n^0(u_{n2}-iv_0h/2)}\Big|$$$$-\ln\Big|\frac{\underline{m}(u_{n1})-\overline{\underline{m}}(u_{n2})}
{\underline{m}(u_{n1})-\underline{m}(u_{n2})}\Big|\Big]du_1du_2\longrightarrow
0,
$$
where $u_{nj}=x_j-u_jh$, $j=1,2$. In addition, it follows from
(\ref{f75}), (\ref{f72}), and inequalities similar to (\ref{f74})
and (\ref{f73}) that as $n\rightarrow\infty$ and then
$v_0\rightarrow0$
\begin{eqnarray*}
\int_{\frac{x_1-a_r}{h}}^{\frac{x_1-a_l}{h}}\int_{\frac{x_2-a_r-\varepsilon}{h}}
^{\frac{x_2-a_l+\varepsilon}{h}}(K_r'(z_1) K_r'(z_2)-K_r'(u_1)
K_r'(u_2))\ln\Big|\frac{\underline{m}(u_{n1})-\overline{\underline{m}}(u_{n2})}
{\underline{m}(u_{n1})-\underline{m}(u_{n2})}\Big|du_1du_2\\
\rightarrow
0.
\end{eqnarray*}
Therefore (\ref{e4}) can be reduced to the following
\begin{eqnarray}
\qquad \int_{\frac{x_1-a_r}{h}}^{\frac{x_1-a_l}{h}}\int_{\frac{x_2-a_r-\varepsilon}{h}}
^{\frac{x_2-a_l+\varepsilon}{h}}K'(u_1)
K'(u_2)\ln\Big|\frac{\underline{m}(u_{n1})-\overline{\underline{m}}(u_{n2})}
{\underline{m}(u_{n1})-\underline{m}(u_{n2})}\Big|du_1du_2+o(1),\label{f71}
\end{eqnarray}
which turns to be
$$\frac{1}{h^2}\int_{x_1-a_r}^{x_1-a_l}\int_{x_2-a_r-\varepsilon}
^{x_2-a_l+\varepsilon}
K'(\frac{u_1}{h})K'(\frac{u_2}{h})\ln\Big|\frac{\underline{m}(x_1-u_1)-\overline{\underline{m}}(x_2-u_2)}
{\underline{m}(x_1-u_1)-\underline{m}(x_2-u_2)}\Big|du_1du_2+o(1).
$$

To handle (\ref{f71}), we need two more lemmas:
\begin{lemma}\label{lem4} Suppose that the function $g(x_1,x_2)$ is continuous in $x_1$ and $x_2$,
\begin{equation}\label{f64}
\int_{x_1-a_r}^{x_1-a_l}\int_{x_2-a_r}^{x_2-a_l}|g(x_1-u_1,x_2-u_2)|du_1du_2<\infty
\end{equation}
and
\begin{equation}\label{f64}
\int_{x_1-a_r}^{x_1-a_l}|g(x_1-u_1,x_2)|du_1<\infty,\quad
\int_{x_2-a_r}^{x_2-a_l}|g(x_1,x_2-u_2)|du_2<\infty.
\end{equation} Then, as $n\rightarrow\infty$
\begin{equation}\label{e7}
\frac{1}{h^2}\int_{x_1-a_r}^{x_1-a_l}\int_{x_2-a_r-\varepsilon}
^{x_2-a_l+\varepsilon}
K'(\frac{u_1}{h})K'(\frac{u_2}{h})g(x_1-u_1,x_2-u_2)du_1du_2\rightarrow
0,
\end{equation}
where $x_1\neq a_l, a_r$ and $x_2\neq a_l, a_r$.
\end{lemma}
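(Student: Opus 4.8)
The plan is to pass to the rescaled variables $v_j=u_j/h$ and then exploit that $\tfrac1h K'(\cdot/h)$ behaves like a mean‑zero approximate identity. After the substitution the left side of (\ref{e7}) becomes
\begin{equation*}
J_n:=\iint_{R_n}K'(v_1)K'(v_2)\,g(x_1-hv_1,x_2-hv_2)\,dv_1\,dv_2,
\end{equation*}
where $R_n=[(x_1-a_r)/h,(x_1-a_l)/h]\times[(x_2-a_r-\varepsilon)/h,(x_2-a_l+\varepsilon)/h]$ increases to $\mathbb{R}^2$. From (\ref{a25})--(\ref{a26}) I first record the three facts that drive the argument: $K(\pm\infty)=0$ and $K'\in L^1(\mathbb{R})$ (since $\int_{|v|\ge1}|K'|\le\int|vK'(v)|\,dv<\infty$ while $K'$ is continuous, hence locally bounded, by $\int|K''|<\infty$), so that $\int_{\mathbb{R}}K'(v)\,dv=0$; and $|vK'(v)|$ is bounded on $\mathbb{R}$ with $|vK'(v)|\to0$ as $|v|\to\infty$, so that $\tfrac1h|K'(u/h)|\le C/|u|$ everywhere and $\sup_{|u|\ge\delta}\tfrac1h|K'(u/h)|\to0$ as $h\to0$ for each fixed $\delta>0$. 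In the regime of interest $a_l<a<x_j<b<a_r$, so $0$ lies interior to each $u_j$-interval; if some $x_j$ lay outside $[a_l,a_r]$ the kernel would be uniformly small on that whole interval and $J_n\to0$ would follow at once from the double integrability hypothesis in (\ref{f64}) and dominated convergence.

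When $g$ is jointly continuous the argument is immediate: on $R_n$ the points $(x_1-hv_1,x_2-hv_2)$ all lie in the fixed compact rectangle $[a_l,a_r]\times[a_l-\varepsilon,a_r+\varepsilon]$, so $|g(x_1-hv_1,x_2-hv_2)|\le M_g$ there, the integrand of $J_n$ is dominated by $M_g\,|K'(v_1)K'(v_2)|\in L^1(\mathbb{R}^2)$ and converges pointwise to $K'(v_1)K'(v_2)g(x_1,x_2)$, whence $J_n\to g(x_1,x_2)\big(\int_{\mathbb{R}}K'\big)^2=0$. To cover the stated hypotheses — where $g$ need only be continuous in each variable separately and may be unbounded, which is exactly why (\ref{f64}) is imposed — I will instead peel off the corner value via the mixed second difference
\begin{equation*}
g(x_1-hv_1,x_2-hv_2)=\Delta(v_1,v_2)+g(x_1,x_2-hv_2)+g(x_1-hv_1,x_2)-g(x_1,x_2),
\end{equation*}
with $\Delta(v_1,v_2)=g(x_1-hv_1,x_2-hv_2)-g(x_1,x_2-hv_2)-g(x_1-hv_1,x_2)+g(x_1,x_2)$, and estimate the four resulting pieces of $J_n$ in turn.

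The three lower‑order pieces factor as products in which one factor is $\int_{R_n^{(j)}}K'(v_j)\,dv_j\to0$ (a tail of a convergent integral with limit $0$; here $R_n^{(1)},R_n^{(2)}$ are the two interval factors of $R_n$), while the other factor, of type $\int_{R_n^{(j)}}K'(v_j)\,g(x_1,x_2-hv_j)\,dv_j$, is bounded: undoing the scaling it equals $\tfrac1h\int K'(\cdot/h)\,\phi$ with $\phi$ one of $g(x_1,x_2-\cdot)$, $g(x_1-\cdot,x_2)$, which is $L^1$ on the relevant interval by the one‑dimensional parts of (\ref{f64}) and continuous at $0$; splitting into $|u|\le\delta$ (size $\le\|K'\|_{L^1}\,\omega(\delta)$ by continuity) and $|u|>\delta$ (size $\le\delta^{-1}o(1)\,\|\phi\|_{L^1}$ by the uniform decay) even shows this factor tends to $0$. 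For the main piece $\iint K'(v_1)K'(v_2)\Delta$, rewritten in the $u$‑variables, I split the rectangle into $\{|u_1|>\delta,\,|u_2|>\delta\}$ — where $\tfrac1{h^2}|K'(u_1/h)K'(u_2/h)|\le\delta^{-2}o(1)$ uniformly and $\iint|\Delta|<\infty$ by (\ref{f64}) — and the three regions where some $|u_j|\le\delta$, on which $\Delta$ is uniformly small (a mixed second difference of a continuous $g$ with one argument trapped near its base point), the remaining variable being controlled in $L^1$ by (\ref{f64}) and the corresponding factor $\int K'$ tending to $0$. Letting $h\to0$ for fixed $\delta$ and then $\delta\to0$ gives $J_n\to0$.

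The main obstacle is precisely the part of the last step near the coordinate axes $u_1=0$ and $u_2=0$: there $\tfrac1h K'(\cdot/h)$ is not uniformly small and one has only $L^1$, not uniform, control of $g$, so no direct dominated‑convergence bound applies; the feature to exploit is that on those regions the second difference $\Delta$ is uniformly small by (separate) continuity, together with $\int_{\mathbb{R}}K'=0$, which together neutralize the logarithmic‑type growth of $g$ that (\ref{f64}) permits.
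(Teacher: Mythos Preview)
There is a genuine gap in your treatment of the ``main piece'' $\iint K'K'\,\Delta$ on the mixed regions $\{|u_1|\le\delta,\ |u_2|>\delta\}$ (and symmetrically). You assert that the second difference $\Delta$ is uniformly small there ``by (separate) continuity'', but this is false under the hypotheses you actually need. In the applications of the lemma the function $g$ has logarithmic singularities \emph{inside} the rectangle (e.g.\ $g(y_1,y_2)=\ln|\underline m(y_1)-\underline m(y_2)|$ blows up on the diagonal $y_1=y_2$, which the rectangle $[a_l,a_r]^2$ certainly meets). Hence for fixed small $|u_1|$ the difference $g(x_1-u_1,x_2-u_2)-g(x_1,x_2-u_2)$ is \emph{not} uniformly small as $u_2$ ranges over the full interval; near the singular set it can be arbitrarily large. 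The continuity hypothesis in the lemma should be read as continuity \emph{at} (equivalently, in a neighborhood of) the base point $(x_1,x_2)$ only---this is exactly why the integrability conditions (\ref{f64}) are imposed; otherwise they would be superfluous and your ``quick argument'' via boundedness would already settle the matter.

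The paper's route avoids this trap by subtracting only the constant $g(x_1,x_2)$ (not the full second difference) and splitting the rectangle into the four regions determined by $|u_j|\lessgtr\delta_j$. On the mixed region $\{|u_1|\le\delta_1,\ |u_2|>\delta_2\}$ they extract the small factor from the \emph{far} kernel via $\tfrac1h|K'(u_2/h)|\le\delta_2^{-1}\sup_{|v|>\delta_2/h}|vK'(v)|\to0$, which leaves an integral of the form $\tfrac1h\int_{|u_1|\le\delta_1}|K'(u_1/h)|\int|g(x_1-u_1,x_2-u_2)|\,du_2\,du_1$. They then split $g(x_1-u_1,x_2-u_2)=\big[g(x_1-u_1,x_2-u_2)-g(x_1,x_2-u_2)\big]+g(x_1,x_2-u_2)$: the second piece is controlled by the one-dimensional integrability in (\ref{f64}), and the first is handled by a dominated-convergence step using $L^1$-continuity of the map $u_1\mapsto g(x_1-u_1,\cdot)$ at $u_1=0$. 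This last point is the missing idea in your argument: on the mixed strip one cannot get uniform smallness of $\Delta$, but one \emph{can} get smallness of $\int|g(x_1-u_1,\cdot)-g(x_1,\cdot)|\,du_2$ for small $|u_1|$, and that---combined with the decay of $|vK'(v)|$ on the far variable---is what closes the estimate.
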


 Define the sets $G_1=(|u_1|\leq\delta_1)\cap (|u_2|>\delta_2)$, $G_2=(|u_1|>\delta_1)\cap (|u_2|\leq\delta_2)$ and $G_3=(|u_1|>\delta_1)\cap (|u_2|>\delta_2)$. Splitting the region of integration into the union of the sets $(|u_1|\leq\delta_1)\cap (|u_2|\leq\delta_2)$, $G_1$, $G_2$ and $G_3$ gives
$$\Big|\frac{1}{h^2}\int_{x_1-a_r}^{x_1-a_l}\int_{x_2-a_r-\varepsilon}
^{x_2-a_l+\varepsilon}
K'(\frac{u_1}{h})K'(\frac{u_2}{h})\Big[g(x_1-u_1,x_2-u_2)-g(x_1,x_2)\Big]du_1du_2\Big|$$
\begin{eqnarray}
\leq I_1+I_2+I_3+I_4+I_5\label{f70},
\end{eqnarray}
where
$$I_1= \sup\limits_{|u_1|\leq\delta_1,|u_2|\leq\delta_2}\Big|g(x_1-u_1,x_2-u_2)-g(x_1,x_2)\Big|\int_{-\infty}^{+\infty}
|K'(u)|du\Big|^2,
$$
$$I_2= |g(x_1,x_2)|\Big|\frac{1}{h^2}\int_{x_1-a_r}^{x_1-a_l}\int_{x_2-a_r-\varepsilon}
^{x_2-a_l+\varepsilon}I(G_1\cup G_2\cup G_3)
K'(\frac{u_1}{h})K'(\frac{u_2}{h})du_1du_2\Big|,
$$$$I_3=\Big|\frac{1}{h^2}\int_{x_1-a_r}^{x_1-a_l}\int_{x_2-a_r-\varepsilon}
^{x_2-a_l+\varepsilon}I(G_1)
K'(\frac{u_1}{h})K'(\frac{u_2}{h})g(x_1-u_1,x_2-u_2)du_1du_2\Big|,
$$
$$I_4=\Big|\frac{1}{h^2}\int_{x_1-a_r}^{x_1-a_l}\int_{x_2-a_r-\varepsilon}
^{x_2-a_l+\varepsilon}I(G_2)
K'(\frac{u_1}{h})K'(\frac{u_2}{h})g(x_1-u_1,x_2-u_2)du_1du_2\Big|
$$
and
$$I_5=\Big|\int_{x_1-a_r}^{x_1-a_l}\int_{x_2-a_r-\varepsilon}
^{x_2-a_l+\varepsilon}I(G_3)
\frac{u_1u_2}{h^2}K'(\frac{u_1}{h})K'(\frac{u_2}{h})\frac{g(x_1-u_1,x_2-u_2)}{u_1u_2}du_1du_2\Big|.
$$
Evidently, $I_1\rightarrow 0$ due to the continuity property of $g(x_1,x_2)$ when $\delta_1$ and $\delta_2$ converge to zero. As $n\rightarrow\infty$, for $I_2$ we have
$$
I_2\leq M|g(x_1,x_2)|\int\limits_{|u|>\delta/h}|
K'(u)|du\int_{-\infty}^{+\infty}
|K'(u)|du\rightarrow 0,
$$
and for $I_5$ by (\ref{f64}) we obtain
\begin{eqnarray*}
&&I_5\leq\frac{1}{\delta_1\delta_2}\sup\limits_{|u_1|>\delta_1/h}|u_1K'(u_1)|\sup\limits_{|u_2|>\delta_2/h}|u_2K'(u_2)|
\\
&&\qquad\qquad\times\int_{x_1-a_r}^{x_1-a_l}\int_{x_2-a_r-\varepsilon}
^{x_2-a_l+\varepsilon}|g(x_1-u_1,x_2-u_2)|du_1du_2\rightarrow 0.
\end{eqnarray*}
Consider $I_3$. Similar to $I_5$,
$$
I_3\leq\frac{1}{\delta_2}\sup\limits_{|u_2|>\delta_2/h}|u_2K'(u_2)|\int_{|u_1|\leq\delta_1/h}\int_{x_2-a_r-\varepsilon}
^{x_2-a_l+\varepsilon}|K'(u_1)g(x_1-u_1h,x_2-u_2)|du_1du_2.
$$
While, as $n\rightarrow\infty$ and then $\delta_1\rightarrow 0$, by the dominated convergence theorem
$$
\frac{1}{h}\int_{|u_1|\leq\delta_1}|K'(\frac{u_1}{h})|\int_{x_2-a_r-\varepsilon}
^{x_2-a_l+\varepsilon}|(g(x_1-u_1,x_2-u_2)-g(x_1,x_2-u_2))|du_1du_2\rightarrow 0.
$$
From (\ref{f64}) we then see that $I_3\rightarrow 0$. One may
similarly prove that $I_4$ converges to zero as well. We summarize
the above that (\ref{f70}) converges to zero as $n\rightarrow\infty$
first and then both $\delta_1\rightarrow0$ and
$\delta_2\rightarrow0$. In addition, apparently,
\begin{eqnarray}
g(x_1,x_2)\frac{1}{h}\int_{x_1-a_r}^{x_1-a_l}
K'(\frac{u}{h})du&=&g(x_1,x_2)\int_{\frac{x_1-a_l}{h}}^{\frac{x_1-a_r}{h}}
K'(u)du\label{f78}\\
&=&g(x_1,x_2)K(u)\Big|_{\frac{x_1-a_l}{h}}^{\frac{x_1-a_r}{h}}\rightarrow 0. \nonumber
\end{eqnarray}
Thus (\ref{e7}) is proved.

The next lemma extends (1.6) in \cite{s1}, which now includes the
boundary points of $F^{c,H}(x)$ under some extra conditions.
\begin{lemma} \label{lem5} Suppose that the support of $F^{c,H}(x)$ is $[a,b]$
with $a>0$ and $b$ finite. Then $\underline{m}(x)$ is the unique
solution to the equation
\begin{equation}\label{f68*}
x=-\frac{1}{\underline{m}(x)}+c\int\frac{\lambda
dH(\lambda)}{1+\lambda\underline{m}(x)},
\end{equation}
where $\lim\limits_{z\rightarrow
x}\underline{m}(z)=\underline{m}(x)$.
\end{lemma}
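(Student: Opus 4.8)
The plan is to split into three cases: $x$ in the interior $(a,b)$ of $\supp F^{c,H}$, $x$ in the complement $(0,a)\cup(b,\infty)$, and the two endpoints $x=a,b$. In the first two cases the assertion — existence of $\underline m(x):=\lim_{z\to x,\,z\in\CC^+}\underline m(z)$, validity of (\ref{f68*}), and uniqueness of the solution (real with $z'(\underline m)>0$ off the support, the unique root in $\CC^+$ inside it) — is precisely (1.6) of \cite{s1}: off $\supp F^{c,H}$ the function $\underline m$ is real and real-analytic with $z'(\underline m)>0$, while on $(a,b)$ the density is continuous and positive, so $\Im\underline m(x)>0$. Hence the only new point is the two endpoints, and here the hypothesis that $\supp F^{c,H}=[a,b]$ is a single interval with $0<a\le b<\infty$ enters.

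The tool is the inverse relation from (\ref{f51})/(\ref{a5}) (see also (3.10) of \cite{b4} and Section 6.3 of \cite{ker}),
$$
x=z(\underline m):=-\frac{1}{\underline m}+c\int\frac{\lambda\,dH(\lambda)}{1+\lambda\underline m},
$$
which, $\supp H$ being bounded, is analytic in $\underline m$ on $\CC^+$ and near any real $\underline m_0\ne0$ with $-1/\underline m_0\notin\supp H$, and for which $\underline m$ is one-to-one on $\CC^+$ (by (\ref{a3}), $\underline m(z_1)=\underline m(z_2)$ forces $z_1=z(\underline m(z_1))=z(\underline m(z_2))=z_2$). I treat $b$; the argument for $a$ is identical up to orientation. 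For real $x>b$ near $b$, by \cite{s1} $\underline m(x)$ is real, monotone, with $z'(\underline m(x))>0$; set $\underline{m}_b:=\lim_{x\downarrow b}\underline m(x)\in\R$. Because $b$ is an endpoint of the support, $z'(\underline{m}_b)=0$, and because $[a,b]$ is a single interval $b$ is a square-root edge in the sense of \cite{s1}: $z$ is analytic at $\underline{m}_b$ with $z''(\underline{m}_b)\ne0$, $\underline{m}_b\ne0$, $-1/\underline{m}_b\notin\supp H$. Then near $\underline{m}_b$
$$
z(\underline m)-b=\tfrac12 z''(\underline{m}_b)(\underline m-\underline{m}_b)^2+O\big((\underline m-\underline{m}_b)^3\big),
$$
so $z$ is a two-to-one branched cover there whose local inverse is $\underline m(z)-\underline{m}_b=\pm\big(2(z-b)/z''(\underline{m}_b)\big)^{1/2}(1+o(1))$; consequently $\underline m$ stays bounded as $z\to b$ (the density of $\underline F^{c,H}$ near $b$ behaving like a constant times $(b-\lambda)^{1/2}$), and any subsequential limit $w$ of $\underline m(z)$ satisfies $z(w)=b$, $\Im w\ge0$, hence $w=\underline{m}_b$ by the local two-valued structure. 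Thus $\lim_{z\to b}\underline m(z)=\underline{m}_b$ exists, is real, and $b=z(\underline{m}_b)$ is (\ref{f68*}) at $x=b$; together with the continuity already known on $(0,a)\cup(a,b)\cup(b,\infty)$, $\underline m$ extends continuously to all of $(0,\infty)$.

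For uniqueness at $b$ (and similarly at $a$): a root $\underline m_0$ of (\ref{f68*}) with $\Im\underline m_0>0$ would, by injectivity of $\underline m$ on $\CC^+$ and $z(\underline m_0)=b$, give $\underline m(b)=\underline m_0\in\CC^+$, contradicting $\underline m(b)=\underline{m}_b\in\R$; and among the real roots $\underline{m}_b$ is distinguished, exactly as in \cite{s1}, as the endpoint value approached by $\underline m(x)$ as $x\downarrow b$. So $\underline m(b)$ is the unique solution of (\ref{f68*}) in the same sense as (1.6) of \cite{s1}, and likewise $\underline m(a)$. The main obstacle is importing from \cite{s1} that, when $\supp F^{c,H}$ is a single interval, the endpoints $a,b$ really are square-root edges of $z(\cdot)$ ($z'=0$, $z''\ne0$, with $\underline m$-value finite and off the poles of the integrand), and that $\underline m(z)$ picks the $\CC^+$-branch of the local inverse as $z\to a,b$; granting these, the local inversion above is routine.
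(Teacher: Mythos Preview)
Your proof is plausible but takes a genuinely different route from the paper's, and the gap you yourself flag is real.

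The paper does not split into interior/complement/endpoints at all, and in particular does not appeal to any square-root edge structure of $z(\underline m)$. Instead it argues uniformly in $x$ by showing that one can pass to the limit $z\to x$ directly in the identity $z=-1/\underline m(z)+c\int\frac{t\,dH(t)}{1+t\underline m(z)}$. The only obstruction to this is control of the integral term; the paper obtains it by taking imaginary parts of (\ref{f51}),
\[
v=\frac{\Im\underline m(z)}{|\underline m(z)|^2}-c\,\Im\underline m(z)\int\frac{t^2\,dH(t)}{|1+t\underline m(z)|^2},
\]
dividing through by $\Im\underline m(z)$, and using the elementary lower bound $\Im\underline m(z)\ge v/(M+v^2)$ together with $\inf|\underline m(z)|>0$ from (\ref{f58}). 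This yields $\sup_{z\in S}\int\frac{t^2\,dH(t)}{|1+t\underline m(z)|^2}\le M$, which is exactly what is needed to justify the limit and get (\ref{f68*}) at every $x$, endpoints included. Uniqueness is then deduced from continuity of $\underline m(\cdot)$ and the uniqueness on $\{\Im\underline m>0\}$ from \cite{s1}.

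Your approach, by contrast, tries to read off the endpoint behaviour from the local geometry of $z(\underline m)$ near $\underline m_b$. That would work if you actually had $z''(\underline m_b)\neq 0$ and $-1/\underline m_b\notin\supp H$, but you correctly identify this as ``the main obstacle,'' and it is not a free consequence of $\supp F^{c,H}$ being a single interval: \cite{s1} characterises the support via intervals where $z'>0$ and shows $z'=0$ at edges, but does not supply the nondegeneracy $z''\neq 0$ in general. So as written your argument is incomplete precisely at the endpoints, which is the only case the lemma is meant to add to (1.6) of \cite{s1}. The paper's imaginary-part trick sidesteps all of this: it needs no edge regularity, no single-interval hypothesis for the limit step, and no local inversion --- just a uniform integrability bound.
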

\begin{proof} When $u$, the real part of $z$, is bounded, we have
$$
\Im (\underline{m}(z))\geq \frac{v}{M+v^2}.
$$
It follows that
\begin{equation}\label{f69*}
\frac{v}{\Im (\underline{m}(z))}\leq M+v^2.
\end{equation}
 Considering the imaginary parts of both
sides of the equality (\ref{f51}) yields
$$
v=\frac{\Im(\underline{m}(z))}{|\underline{m}(z)|^2}-c\Im(\underline{m}(z))\int\frac{t^2dH(t)}{|1+t\underline{m}(z)|^2},
$$
which, together with (\ref{f58}) and (\ref{f69*}), implies
\begin{equation}\label{f70}
\sup\limits_{z\in S}\int\frac{t^2dH(t)}{|1+t\underline{m}(z)|^2}\leq
M.
\end{equation}
Taking $z\rightarrow x$ in (\ref{f51}) and using (1.5) in \cite{s1}
we then see that (\ref{f68*}) is true. The uniqueness of
$\underline{m}(x)$ is from continuity of $\underline{m}(x)$ and the
uniqueness of $\underline{m}(x)$ when $\Im\underline{m}(x)\neq 0$
given in \cite{s1}.
\end{proof}

We are now in a position to apply Lemma \ref{lem4} to (\ref{f71}).
It follows from Lemma \ref{lem5} that $\underline{m}(x_1)\neq
\underline{m}(x_2)$ and $\underline{m}(x_1)\neq
\overline{\underline{m}}(x_2)$ whenever $x_1\neq x_2$. Also, note
(5.1) in \cite{b2}. Therefore
$g(x_1,x_2)=\ln\Big|\frac{\underline{m}(x_1)-\overline{\underline{m}}(x_2)}
{\underline{m}(x_1)-\underline{m}(x_2)}\Big|$ is continuous in $x_1$
and $x_2$. Furthermore, it is straightforward to show that $ \ln
\Big(1+\frac{M}{(x_1-x_2)-(u_1-u_2)} \Big) $ for $u_1,u_2\in
[a_l-\varepsilon,a_r+\varepsilon]$ is Lebesgue integrable and $\ln
\Big(1+\frac{M}{(x_1-x_2)-(u_1)} \Big)$ for $u_2\in
[a_l-\varepsilon,a_r+\varepsilon]$ is Lebesgue integrable. Thus, in
view of inequalities similar to (\ref{e8})-(\ref{f73}) and applying
(\ref{e7}) we have
\begin{equation}\label{f67}
\frac{1}{h^2}\int_{x_1-a_r}^{x_1-a_l}\int_{x_2-a_r-\varepsilon}
^{x_2-a_l+\varepsilon}
K'(\frac{u_1}{h})K'(\frac{u_2}{h})\ln\Big|\frac{\underline{m}(x_1-u_1)-\overline{\underline{m}}(x_2-u_2)}
{\underline{m}(x_1-u_1)-\underline{m}(x_2-u_2)}\Big|du_1du_2\rightarrow
0,
\end{equation}
which is the limit of (\ref{e4}) due to (\ref{f71}) when $x_1\neq
x_2$.

 When $x_1=x_2=x$ taking
$g(x_1,x_2)=\ln\Big|\underline{m}(x)-\overline{\underline{m}}(x)\Big|$
and applying (\ref{e7}) we obtain
\begin{equation}\label{f67}
\frac{1}{h^2}\int_{x-a_r}^{x-a_l}\int_{x-a_r-\varepsilon}
^{x-a_l+\varepsilon}
K'(\frac{u_1}{h})K'(\frac{u_2}{h})\ln\Big|\underline{m}(x-u_1)-\overline{\underline{m}}(x-u_2)\Big|du_1du_2\rightarrow 0.
\end{equation}
Here we keep in mind that the boundary points are not considered when
investigating the case $x_1=x_2=x$. Consider next
\begin{equation}\label{a12}
\frac{1}{h^2}\int_{x-b}^{x-a}
K'(\frac{u_1}{h})\int_{x-b-\varepsilon} ^{x-a+\varepsilon}
K'(\frac{u_2}{h})\ln\Big|\underline{m}(x-u_1)-\underline{m}(x-u_2)\Big|du_1du_2.
\end{equation}
By complex Roller's theorem we have
\begin{eqnarray}
&&\ln\Big|\underline{m}(x-u_1)-\underline{m}(x-u_2)\Big|\non&=&\frac{1}{2}\ln
\Big((u_1-u_2)^2[|\underline{m}'_r(x-u_3)|^2+|\underline{m}'_i(x-u_4)|^2]\Big)\non
&=&\frac{1}{2}\ln (u_1-u_2)^2
+\frac{1}{2}g_{ri}(x-u_1,x-u_2),\label{f65}
\end{eqnarray}
where $g_{r}(x-u_1,x-u_2)=\ln
\Big(|\underline{m}'_r(t_1(x-u_1)+(1-t_1)(x-u_2))|^2+|\underline{m}'_i(t_2(x-u_1)+(1-t_2)(x-u_2))|^2\Big),$
$u_3=t_1u_1+(1-t_1)u_2,\ u_4=t_2u_1+(1-t_2)u_2$ and $t_1,t_2\in
(0,1)$. It follows from inequalities for $\underline{m}(x)$ similar
to (\ref{f58}) that
$$
\Big|\int_{x-b}^{x-a} \int_{x-b-\varepsilon} ^{x-a+\varepsilon}
\ln\Big|\underline{m}(x-u_1)-\underline{m}(x-u_2)\Big|du_1du_2\Big|<\infty.
$$
This, together with (\ref{f65}), ensures that
$$
\Big|\int_{x-b}^{x-a} \int_{x-b-\varepsilon} ^{x-a+\varepsilon}
g_{r}(x-u_1,x-u_2)du_1du_2\Big|<\infty.
$$
Similarly, one may verify the remaining conditions in Lemma
\ref{lem4}. Therefore, using Lemma \ref{lem4} with
$g(x_1,x_2)=\ln|\underline{m}'(x)|^2$ gives
\begin{equation}\label{f68}
\frac{1}{h^2}\int_{x-b}^{x-a}
K'(\frac{u_1}{h})\int_{x-b-\varepsilon} ^{x-a+\varepsilon}
K'(\frac{u_2}{h})g_{r}(x-u_1,x-u_2)du_1du_2\rightarrow 0.
\end{equation}
We then conclude from (\ref{f65}), (\ref{f68}) and (\ref{f75}) that
$$
(\ref{a12})=\frac{1}{2}\frac{1}{h^2}\int_{x-b}^{x-a}
K'(\frac{u_1}{h})\int_{x-b-\varepsilon} ^{x-a+\varepsilon}
K'(\frac{u_2}{h})\ln (u_1-u_2)^2du_1du_2+o(1)
$$
\begin{equation}\label{a13}
\rightarrow\frac{1}{2}\int_{-\infty}^{+\infty}
\int_{-\infty}^{+\infty}K'(u_1) K'(u_2)\ln (u_1-u_2)^2du_1du_2.
\end{equation}
which is the opposite number of the limit of (\ref{e4}) due to (\ref{f67}) and (\ref{f71})
when $x_1=x_2$.

\noindent
{\bf Limit of (\ref{f50})}.  From an expression similar to (\ref{f51})
we obtain
$$
\frac{d}{dz}\underline{m}_n^0(z)=\frac{(\underline{m}_n^0(z))^2}{1-c\int\frac{t^2(\underline{m}_n^0(z))^2}{(1+t\underline{m}_n^0(z))^2}dH_n(t)}.
$$
It follows that (\ref{f50}) becomes
$$
\frac{1}{4\pi i}\oint K(\frac{x-z}{h})\frac{d}{dz}\log
\Big[1-c\int\frac{t^2(\underline{m}_n^0(z))^2}{(1+t\underline{m}_n^0(z))^2}dH_n(t)\Big]dz
$$
\begin{equation}\label{f52}
=\frac{1}{4\pi hi}\oint K'(\frac{x-z}{h})\log
\Big[1-c\int\frac{t^2(\underline{m}_n^0(z))^2}{(1+t\underline{m}_n^0(z))^2}dH_n(t)\Big]dz
\end{equation}
As in the inequality above (6.37) in \cite{ker} and (3.21) in \cite{b4} one may prove that
\begin{equation}\label{f53}
|1-c\int\frac{t^2(\underline{m}_n^0(z))^2}{(1+t\underline{m}_n^0(z))^2}dH_n(t)|\geq
Mv.
\end{equation}
This implies that the integrals on the two vertical lines in
(\ref{f52}) are bounded by $Mv\log v^{-1}$, which converges to zero as $v\to 0$.
The integrals on the two horizontal lines are equal to
\begin{eqnarray}
&&\frac{1}{2\pi h}\int K_i'(\frac{x-z}{h})\log \Big
|1-c\int\frac{t^2(\underline{m}_n^0(z))^2}{(1+t\underline{m}_n^0(z))^2}dH_n(t)\Big
|du\label{f54}
\\ &+& \frac{1}{2\pi h}\int K_r'(\frac{x-z}{h})\arg
\Big
[1-c\int\frac{t^2(\underline{m}_n^0(z))^2}{(1+t\underline{m}_n^0(z))^2}dH_n(t)\Big
]du.\label{f55}
\end{eqnarray}
By (2.19) in \cite{b2}, (\ref{e5}) and (\ref{f53}) we see that
(\ref{f54}) is bounded by $Mv\log v^{-1}$, converging to zero. It follows from (\ref{f66}) and Lemma \ref{lem3}
that
$$
\int\frac{t^2(\underline{m}_n^0(z_n))^2}{(1+t\underline{m}_n^0(z_n))^2}dH_n(t)-\int\frac{t^2(\underline{m}(u_n))^2}{(1+t\underline{m}(u_n))^2}dH_n(t)\rightarrow0.
$$
We also claim that
\begin{equation}
\label{f69}
\int\frac{t^2(\underline{m}(u_n))^2}{(1+t\underline{m}(u_n))^2}dH_n(t)-\int\frac{t^2(\underline{m}(u_n))^2}{(1+t\underline{m}(u_n))^2}dH(t)\rightarrow 0.
\end{equation}
To see this, introduce random variables $T_n$ having distribution $H_n(t)$ and $T$ having distribution $H(t)$. Then $T_n\stackrel{D}\longrightarrow T$. Also $T_n$ and $T$ are both bounded. Consequently by Lemma \ref{lem3}
$$
E\Big|\frac{T_n^2}{(1+T_n\underline{m}(u_n))^2}-\frac{T^2}{(1+T\underline{m}(u_n))^2}\Big|
$$$$\leq \Big(E|\frac{T_n}{1+T_n\underline{m}(u_n)}-\frac{T}{1+T\underline{m}(u_n)}|^2)E|\frac{T_n}{1+T_n\underline{m}(u_n)}+\frac{T}{1+T\underline{m}(u_n)}|^2\Big)^{1/2}
$$$$\leq M\Big(E|\frac{1}{1+T_n\underline{m}(u_n)}|^6E|\frac{1}{1+T\underline{m}(u_n)}|^6E|T_n-T|^3\Big)^{1/3}
$$
converging to zero. Thus (\ref{f69}) is true, as claimed. We then conclude from the dominated convergence theorem that
$$
\int K_r'(z)\arg \Big
[1-c\int\frac{t^2(\underline{m}_n^0(z_n))^2}{(1+t\underline{m}_n^0(z_n))^2}dH_n(t)\Big]
$$$$-\arg \Big
[1-c\int\frac{t^2(\underline{m}(u_n))^2}{(1+t\underline{m}(u_n))^2}dH(t)\Big
]du\rightarrow0.
$$
Moreover, by (\ref{f72}) we obtain
$$
\int (K_r'(z)-K_r'(u))
\arg \Big
[1-c\int\frac{t^2(\underline{m}(u_n))^2}{(1+t\underline{m}(u_n))^2}dH(t)\Big
]du\rightarrow0.
$$
By (\ref{f78}) and Theorem 1A in \cite{p1} (replacing $K(x)$ there by $K'(x)$) we see that
$$
\int K_r'(u))
\arg \Big
[1-c\int\frac{t^2(\underline{m}(u_n))^2}{(1+t\underline{m}(u_n))^2}dH(t)\Big
]du\rightarrow0.
$$
Summarizing the above yields that (\ref{f50}) converges to zero.

\noindent
{\bf Limits of (\ref{g43}) and (\ref{g44})}. Repeating the argument leading to (\ref{f71}) yields (\ref{g43}) becomes
\begin{equation}\label{g45}\frac{1}{h^2\ln \frac{1}{h}}\int_{x_1-a_r}^{x_1-a_l}\int_{x_2-a_r-\varepsilon}
^{x_2-a_l+\varepsilon}
K(\frac{u_1}{h})K(\frac{u_2}{h})\ln\Big|\frac{\underline{m}(x_1-u_1)-\overline{\underline{m}}(x_2-u_2)}
{\underline{m}(x_1-u_1)-\underline{m}(x_2-u_2)}\Big|du_1du_2+o(1).
\end{equation}
The argument of (\ref{f70}) in Lemma \ref{lem4} indeed also, together with (\ref{a26}),  gives
\begin{equation}\label{g46}
\frac{1}{h^2}\int_{x_1-a_r}^{x_1-a_l}\int_{x_2-a_r-\varepsilon}
^{x_2-a_l+\varepsilon}
K(\frac{u_1}{h})K(\frac{u_2}{h})g(x_1-u_1,x_2-u_2)du_1du_2-g(x_1,x_2)\rightarrow 0.
\end{equation}
This ensures that (\ref{g45}) converges to zero when $x_1\neq x_2$. When $x_1= x_2=x$, by (\ref{g46}) we have
$$
\frac{1}{h^2\ln \frac{1}{h}}\int_{x_1-a_r}^{x_1-a_l}\int_{x_2-a_r-\varepsilon}
^{x_2-a_l+\varepsilon}
K(\frac{u_1}{h})K(\frac{u_2}{h})\ln\Big|\underline{m}(x_1-u_1)-\overline{\underline{m}}(x_2-u_2)\Big|du_1du_2\rightarrow 0.
$$
Applying (\ref{g46}) and replacing $K'(x)$ in (\ref{f75}), (\ref{f65}), (\ref{f68}) and (\ref{a13}) by $K(x)$, we can prove that
$$
-\frac{1}{h^2\ln \frac{1}{h}}\int_{x_1-a_r}^{x_1-a_l}\int_{x_2-a_r-\varepsilon}
^{x_2-a_l+\varepsilon}
K(\frac{u_1}{h})K(\frac{u_2}{h})\ln\Big|\underline{m}(x_1-u_1)-\underline{m}(x_2-u_2)\Big|du_1du_2\rightarrow 1.
$$

Checking on the argument of (\ref{f50}) and replacing $K'(x)$ there
with $K(x)$, along with(\ref{g47}), we have
$$
(\ref{g44})\rightarrow 0.
$$

\end{document}